\documentclass{amsart}
\pagestyle{plain}

\sloppy

\usepackage{amsmath}
\usepackage{amssymb}
\usepackage{array}
\usepackage[matrix,arrow,curve]{xy}
\usepackage{hyperref}
\usepackage{latexsym}

    \numberwithin{equation}{section}
     \newtheorem{theorem}{Theorem}[section]
\newtheorem{lemma}[theorem]{Lemma}

   \theoremstyle{definition}
   \newtheorem{definition}[theorem]{Definition}

   \theoremstyle{remark}
    \newtheorem{remark}[theorem]{Remark}

         \theoremstyle{notation}
    \newtheorem{notation}[theorem]{Notation}

         \theoremstyle{corollary}
    \newtheorem{corollary}[theorem]{Corollary}

\newcommand{\Equat}{\operatorname{Equat}}
\newcommand{\degg}{\operatorname{deg}}
\newcommand{\ord}{\operatorname{ord}}
\newcommand{\Pic}{\operatorname{Pic}}
\newcommand{\Aut}{\operatorname{Aut}}
\newcommand{\Cr}{\operatorname{Cr}}
\newcommand{\Ker}{\operatorname{Ker}}
\newcommand{\Stab}{\operatorname{Stab}}

\newcommand{\Imm}{\operatorname{Im}}
\newcommand{\Disc}{\operatorname{Disc}}
\newcommand{\Orb}{\operatorname{Orb}}
\newcommand{\elm}{\operatorname{elm}}

\title{The conjugacy classes of finite nonsolvable subgroups in the plane Cremona group.}
\author{Vladimir Igorevich Tsygankov}

\address{Johannes Gutenberg-Universit\"{a}t, Institut f\"{u}r Mathematik, Staudingerweg 9, 55099 Mainz, Germany}
\email{vova.tsygankov@googlemail.com}
\thanks{The research leading to these results has received partial funding from the European Union Seventh Framework Programme (FP7/2007-2013) under grant agreement 248826 and from the grant NSh-4731.2010.1.}

    \begin{document}
    \maketitle

\begin{abstract}
  The aim of this paper  is to give a finer geometric description of the algebraic varieties parametrizing
conjugacy classes of nonsolvable subgroups in the plane Cremona group.
\end{abstract}

\noindent Keywords: Cremona group, del Pezzo surface, conic bundle, automorphisms group.

\noindent MSC: 14E07; 14J26

    \section{Introduction.}

    The classification of finite subgroups in the plane Cremona group over the field $\mathbb{C}$ denoted by $\Cr_2(\mathbb{C})$ is a classical problem.  The history of this problem begins with the work of E. Bertini \cite{3}, where are classified the conjugacy classes of subgroups of order $2$ in $\Cr_2(\mathbb{C})$. There were obtained  three families of conjugacy classes now called as involution de Jonqui\`{e}res, Geiser and Bertini. However, the classification was  incomplete, and the proof was not rigorous. Only recently in \cite{1} was obtained complete and short proof.

    In 1895 Kantor \cite{7} and Wiman \cite{8} gave a description of finite subgroups in $\Cr_2(\mathbb{C})$. The list was fairly comprehensive, but was not full in the following aspects. Firstly, for a given finite subgroup on this list could not be defined, whether it is contained in the Cremona group or not. Secondly, the question of  conjugacy between the subgroups was not considered.

    Modern approach to the problem was initiated by the work of Manin \cite{Manin-67} and continued in  works of Iskovskikh \cite{9}, \cite{10}, \cite{11}. In the paper \cite{Manin-67} is established a clear link between the classification of conjugacy classes of finite subgroups of the Cremona group and the classification of $G$-minimal rational surfaces $ (S, G) $ and $G$-equivariant birational maps between them. The consideration is divided into two cases: when $S$ is a del Pezzo surface, and when $S$ is a conic bundle.

\begin{definition}
 Let $G$ be a finite group.  A $G$-surface is a triple  $(S,G,\rho)$, where $S$ is a nonsingular projective surface, and $\rho$ is a monomorphism of the group $G$ to the automorphisms group of the surface $S$. For brevity, $G$-surface will be denoted by $(S, G)$.
     \end{definition}
Let $G$ be a finite subgroup in $\Cr_2(\mathbb{C})$ with an embedding $\theta: G \hookrightarrow \Cr_2(\mathbb{C})$. It turns out that the action of $G$ on $\mathbb{P}^2$ can be \textit{regularized}, i.e there exists a smooth rational surface $S$ and a birational map $\mu: S \dashrightarrow \mathbb{P}^2$ such that $\mu^{-1} \circ \theta(G) \circ \mu$ is a subgroup of automorphisms of $S$.

   Certainly, any regularization is not unique. For example, if we blow up any $G$-orbit of points on $S$. Two distinct $G$-surfaces $(S,G)$ and $(S',G)$ define two conjugate embeddings $\theta: G \rightarrow \Cr_2(\mathbb{C})$ and $\theta': G\rightarrow \Cr_2(\mathbb{C})$ respectively, iff there exist a $G$-equivariant birational map $\zeta: S \dashrightarrow S'$.

\begin{definition}
A $G$-surface $(S,G)$ is called  $G$-minimal, if any $G$-equivariant birational morphism $S\rightarrow Y$ onto a smooth $G$-surface $Y$ is a $G$-isomorphism.
    \end{definition}
\begin{theorem}[(\cite{Manin-67})]
  \label{Man1}
 There are two types of the rational $G$-minimal surfaces $(S,G)$:
                      \begin{itemize}
             \item $S$ is a del Pezzo surface, and $\Pic(S)^G \simeq \mathbb{Z}$;
          \item         $S$ has a $G$-equivariant structure of conic bundle $\phi: S \rightarrow \mathbb{P}^1$, and $\Pic(S)^G \simeq \mathbb{Z}^2$.
                                  \end{itemize}
                      \end{theorem}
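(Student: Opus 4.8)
The plan is to run the $G$-equivariant Minimal Model Program on the smooth rational surface $S$ and read off the two cases from the endpoint. Because $S$ is rational its Kodaira dimension is $-\infty$, so $K_S$ is not pseudo-effective; equivalently $-K_S$ meets the interior of the cone of effective curves. Averaging a $K_S$-negative class over the finite group $G$, the invariant cone $\overline{\mathrm{NE}}(S)^G \subset N_1(S)^G_{\mathbb{R}} \cong \Pic(S)^G \otimes \mathbb{R}$ likewise has a nonempty $K_S$-negative part. By the $G$-equivariant cone theorem for surfaces this part is rational polyhedral, spanned by finitely many $G$-invariant extremal rays $R_1,\dots,R_k$, each generated by the class of a $G$-orbit of rational curves.

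First I would invoke the $G$-equivariant contraction theorem to produce, for each extremal ray $R_i$, a $G$-equivariant morphism $\phi_i\colon S \to Y_i$ with connected fibres and relative invariant Picard number one. The classification of extremal contractions of smooth surfaces then splits $\phi_i$ into exactly three types: (i) a birational morphism contracting a $G$-orbit of pairwise disjoint $(-1)$-curves onto a smooth surface $Y_i$; (ii) a fibration over a smooth curve $Y_i$, necessarily $Y_i \cong \mathbb{P}^1$ since $S$ is rational; or (iii) the structure morphism to a point, in which case $\Pic(S)^G \cong \mathbb{Z}$ and $-K_S$ is ample.

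The role of $G$-minimality is now decisive: a contraction of type (i) is precisely a $G$-equivariant birational morphism onto a smooth $G$-surface that is not an isomorphism, which is excluded by hypothesis. Hence every extremal ray is of type (ii) or (iii). If some ray is of type (iii) we are in the del Pezzo case: $-K_S$ is ample, $S$ is a del Pezzo surface, and $\Pic(S)^G \cong \mathbb{Z}$, giving the first alternative. Otherwise every extremal ray yields a $G$-equivariant fibration $\phi\colon S \to \mathbb{P}^1$. Here I would compute the invariant Picard number from the relative structure: since $\phi$ is a $G$-Mori fibre space its relative invariant Picard number is one, and with $\Pic(\mathbb{P}^1) \cong \mathbb{Z}$ this forces $\Pic(S)^G \cong \mathbb{Z}^2$. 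The general fibre $F$ satisfies $F \cong \mathbb{P}^1$ and $-K_S \cdot F = 2$, so $\phi$ is a conic bundle, which is the second alternative; in particular the case $\operatorname{rank}\Pic(S)^G \ge 3$ never occurs.

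I expect the main obstacle to be the $G$-equivariant contraction step together with the verification that a birational extremal contraction stays within the category of smooth surfaces. Concretely, one must check that the curves sweeping out a birational extremal ray form a $G$-orbit of \emph{disjoint} $(-1)$-curves, so that their simultaneous contraction yields a smooth $G$-surface and genuinely contradicts minimality; and one must ensure the relative invariant Picard number really drops to one over $\mathbb{P}^1$, which is what pins the conic-bundle case at rank exactly two. Establishing polyhedrality of $\overline{\mathrm{NE}}(S)^G$ in the $K_S$-negative region, using $p_g(S)=q(S)=0$, is the remaining technical input.
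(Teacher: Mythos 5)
The paper contains no proof of Theorem \ref{Man1} to compare against: the statement is imported wholesale from \cite{Manin-67} (see also Iskovskikh's papers \cite{10}, \cite{11}), and the author explicitly treats it as known input. Your plan is therefore not "the paper's argument", but it is the standard modern one: the $G$-equivariant Mori theory on which the whole framework of \cite{Dolgachev-Iskovskikh} (Section 3) is built. Manin's original proof is genuinely different in method --- it predates Mori theory and runs through the classical theory of minimal models (Castelnuovo contraction of exceptional curves of the first kind, analysis of invariant pencils and of the Picard lattice, in the setting where $G$ is a Galois group acting on a surface over a perfect field, with finite automorphism groups treated in parallel). What your route buys is a uniform trichotomy read off from the contraction theorem; what the classical route buys is independence from the cone/contraction machinery.

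Your outline is correct, and the obstacle you flag is the only real mathematical content beyond quoting the MMP, so it is worth recording that it closes cleanly. Averaging the cone-theorem decomposition shows every $K_S$-negative extremal ray of $\overline{\mathrm{NE}}(S)^G$ is generated by an orbit sum $D=C_1+\dots+C_m$, where the $C_j$ form a single $G$-orbit and each $C_j$ spans a $K_S$-negative extremal ray of $\overline{\mathrm{NE}}(S)$; by Mori's classification for smooth surfaces each $C_j$ is a $(-1)$-curve, or a fiber class ($C_j^2=0$, $K_S\cdot C_j=-2$), or $\rho(S)=1$ and $S\simeq\mathbb{P}^2$. Since $G$ permutes the $C_j$ transitively, $s:=C_j\cdot(D-C_j)$ is independent of $j$, so in the $(-1)$-curve case $D^2=m(s-1)$. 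If $s=0$ the orbit is disjoint and Castelnuovo gives a smooth $G$-equivariant blow-down, contradicting $G$-minimality, so this case never occurs. If $s=1$ the orbit splits into pairs of $(-1)$-curves meeting transversally; then $D$ is nef with $D^2=0$ and $-K_S\cdot D>0$, a suitable multiple of $D$ defines a $G$-equivariant fibration onto $\mathbb{P}^1$ whose fibers are conics by adjunction, and invariant relative Picard rank one forces $\Pic(S)^G\simeq\mathbb{Z}^2$ (free because $\Pic(S)$ is free). If $s\ge 2$ then $D^2>0$, and the Hodge index theorem (the intersection form is negative definite on $D^{\perp}$) shows that any $G$-invariant nef class supporting the ray is numerically trivial; hence $\overline{\mathrm{NE}}(S)^G$ is a single ray, $\Pic(S)^G\simeq\mathbb{Z}$, and since $K_S$ is invariant and not pseudo-effective it is a negative multiple of the ample generator, i.e.\ $S$ is del Pezzo. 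The same Hodge-index argument disposes of an orbit of two or more distinct fiber classes (distinct nef classes of square zero meet positively, so again $D^2>0$), while a $G$-invariant fiber class is precisely the conic bundle case, and $S\simeq\mathbb{P}^2$ falls under rank one. This fills your gap and completes the dichotomy, including the impossibility of $\operatorname{rank}\Pic(S)^G\ge 3$.
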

               \begin{notation}
             The classes of $G$-minimal rational surfaces from  the first and the second cases of the Theorem \ref{Man1} will be denoted respectively as $\mathbb{D}$ and $\mathbb{CB}$.
               \end{notation}

    More recently, I.V.~Dolgachev and V.A.~Iskovskikh \cite{Dolgachev-Iskovskikh} improved the list of Kantor and Wiman.  The answer was obtained in terms of action of the groups $G$ on the del Pezzo surfaces and on the conic bundles. It was considered question about conjugacy of the finite subgroups in $\Cr_2(\mathbb{C})$, using the theory of elementary links of V.A.~Iskovskikh (see \cite{Iskovskikh-96}). For general case this paper is currently the most precise classification of conjugacy classes of finite subgroups in $\Cr_2(\mathbb{C})$. I note that J.~Blanc in \cite{Blanc-2011} obtained a more precise classification in case of finite cyclic subgroups.

However in  \cite{Dolgachev-Iskovskikh} explicit  equations of $G$-minimal surfaces $(S,G)$ in weighted projective spaces and  explicit descriptions of  actions of  $G$ on  surfaces $S$ were obtained only in case of Del Pezzo surfaces. Also description of groups $G$, acting on $G$-minimal conic bundles $(S,G,\phi)$, was given only in terms of groups extensions. In other words, for a given abstract finite group it is still impossible, using \cite{Dolgachev-Iskovskikh}, to say whether the group is isomorphic to a subgroup of $\Cr_2(\mathbb{C})$. Also classification of conjugacy classes of finite subgroups in $\Cr_2(\mathbb{C})$ has some gaps. If $G \subset \Cr_2(\mathbb{C})$ is regularized as a subgroup of automorphisms of a conic bundle $\phi: S \rightarrow \mathbb{P}^1$ with $K_S^2=1$, or $2$, and $(S,G,\phi)\in \mathbb{CB}$. I will show it consistently for $K_S^2=1$ and 2.

 Let $K_S^2=1$. Consider \cite[Section 8.1, Pages 534-535]{Dolgachev-Iskovskikh}. There is stated non-existence of triples $(S,G,\phi)\in \mathbb{CB}$ with $K_S^2=1$ and ample divisor $-K_S$. However, it's wrong. An example of such triples is presented in \cite[Section 6.2.3, Theorem 6.8]{Tsygankov-10}.  In  this case the authors of \cite{Dolgachev-Iskovskikh} applied an old incorrect version of \cite[Theorem 5.7]{Dolgachev-Iskovskikh}. This version existed until J.~Blanc reported about a mistake to I.~Dolgachev. I note that in published version of \cite{Dolgachev-Iskovskikh} Theorem 5.7 is presented in correct form. Unfortunately, for large volume of work, the authors forgot to update some conclusions from the theorem.

Let $K_S^2=2$. In \cite[Section 8.1, Pages 535]{Dolgachev-Iskovskikh} is stated: if $(S,G,\phi)\in \mathbb{CB}$ with $K_S^2=2$ and non-ample divisor $-K_S$ then   the surface $S$ is \textit{exceptional conic bundle}  (see Definition \ref{defExceptcon}). In other words the surface $S$ contains two smooth non-intersecting rational $(-3)$-curves. This is also wrong. In \cite[Section 5.1.1, Theorem 5.4]{Tsygankov-10} is presented an example of triple $(S,G,\phi)\in \mathbb{CB}$ with $K_S^2=2$ and nef, non-ample divisor $-K_S$.

    In the paper  \cite{Tsygankov-10} I continue classification of $G$-minimal conic bundles, which was begun in \cite{Dolgachev-Iskovskikh}.
For given arbitrary value of $K_S^2$, it was constructed a method of classification by means of explicit  equations of $G$-minimal conic bundles $(S,G)$ in weighted projective spaces and  explicit descriptions of the actions of  $G$ on the Picard group $\Pic(S) $ and on the surface $S$. The classification is carried on completely for $K_S^2> 0$. If $K_S^2 \le 0$ then the $G$-minimal conic bundle $(S,G)$ is birationally rigid. So there is no question about conjugacy  (see \cite[Section 8]{Dolgachev-Iskovskikh}).

    The aim of this paper  is to give a finer geometric description of the algebraic varieties parameterizing
conjugacy classes of finite nonsolvable subgroups in $\Cr_2(\mathbb{C})$, applying methods of papers \cite{Dolgachev-Iskovskikh} and \cite{Tsygankov-10}. It is obtained explicit  equations of $G$-minimal surfaces $(S,G)$ in weighted projective spaces and  explicit descriptions of  actions of  $G$ on  surfaces $S$. Also all possibilities for the groups $G$ are fully described.

In \cite[Section 9]{Dolgachev-Iskovskikh} were stated the following problems for $\Cr_2(\mathbb{C})$:
\begin{itemize}
   \item Find the finer classification of the conjugacy classes of de Jonqui\`{e}res groups.
    \item Give a finer geometric description of the algebraic varieties parameterizing conjugacy classes.
\end{itemize}
This article gives a solution of these problems for the nonsolvable finite subgroups in $\Cr_2(\mathbb{C})$.

It's important to note that investigation method described in the paper can be employed to solve these problems for all finite subgroups in $\Cr_2(\mathbb{C})$, i.e. not necessary nonsolvable. However due to large amount of routine work investigation was conducted only for nonsolvable subgroups.

 The paper has the following structure. In Section \ref{DelPezzo} I study  surfaces from the class $\mathbb{D}$, i.e. the $G$-minimal del Pezzo surfaces $(S,G)$, where  $\Pic(S)^G\simeq \mathbb{Z}$ and $G$ is a finite nonsolvable group.
I will apply here  results of \cite{Dolgachev-Iskovskikh}. There are no my results in this section.

In Section \ref{ConBundle} I study  surfaces from the class $\mathbb{CB}$, i.e. the $G$-minimal surfaces $(S,G,\phi)$, where a morphism $\phi: S\rightarrow \mathbb{P}^1$ defines a $G$-equivariant conic bundle structure, $\Pic(S)^G\simeq \mathbb{Z}^2$, and $G$ is a finite nonsolvable group.
The main  my results are described in this section.

In Section \ref{Conjugate} I study conjugacy classes of embeddings $G\rightarrow \Cr_2(\mathbb{C})$ defined by $G$-minimal surfaces $(S,G)$, for all finite nonsolvable subgroups $G \subset \Cr_2(\mathbb{C})$. Here I reprove results in \cite[Section 7]{Dolgachev-Iskovskikh} for the sake of completeness.

     In Section \ref{List} I present a list of the finite nonsolvable subgroups in $\Cr_2(\mathbb{C})$, obtained on the basis of results of sections \ref{DelPezzo} and \ref{ConBundle}.

  This work is dedicated  to my supervisor Vasily Alekseevich Iskovskikh, who initiated  my study of the Cremona group. I am very grateful to Yuri Gennadievich Prokhorov and Ilya Alexandrovich Tyomkin for useful tips and remarks.

   The base field is assumed everywhere to be $\mathbb{C}$.
 Throughout this paper we will use the following  notations.
  \begin{itemize}
     \item $\varepsilon_n$ denotes a primitive n-th root of unity.
   \item $S_n$  denotes the permutation group of degree $n$.
   \item $A_n$ denotes the alternating group of degree $n$.
      \item Consider a subgroup $A_5\subset PGL(2,\mathbb{C})$, which is isomorphic to the icosahedral automorphisms group, and the standard projection $\psi: SL(2,\mathbb{C})\rightarrow PGL(2,\mathbb{C})$. Then $\bar{A}_5$ denotes the group $\psi^{-1}(A_5)$, which is isomorphic to the binary icosahedral group.
     \item $A.B$, where $A$ and $B$ are some abstract groups, is one of the possible extensions with help of the exact sequence: $0\rightarrow A \rightarrow G \rightarrow B \rightarrow 0$.
    \item Let $H$ be an abstract group. Then $H\wr S_n$ will denote the semidirect product $H^n \rtimes S_n$, where $S_n$ is the symmetric group, acting on $H^n$ by permuting the factors.
        \item $A \triangle_D B$ is a diagonal product of abstract groups $A$ and $B$ over their common homomorphic image  $D$ (i.e. the subgroup of $A \times B$ of pairs $(a,b)$, such that $\alpha(a)=\beta(b)$ for some epimorphisms $\alpha: A \rightarrow D,\ \beta: B \rightarrow D$).
       \item $\mathbb{P}(a_1,\ldots,a_n)$, where $a_i \in \mathbb{Z}$, $i=1,\ldots,n$, is the weighted projective space, with the set of weights $(a_1,\ldots,a_n)$.
   \end{itemize}

\section{Case of del Pezzo surfaces.}
\label{DelPezzo}
  In this section we study the surfaces $(S,G)\in \mathbb{D}$, i.e. $S$ is a $G$-minimal del Pezzo surface, and $\Pic(S)^G\simeq \mathbb{Z}$. The groups $G$ are supposed to be finite nonsolvable.
  We will apply here  results of \cite{Dolgachev-Iskovskikh}. There are no author's results in this section.

Recall that a surface $S$ is called a del Pezzo surface, if $S$ is smooth, and $-K_S$ is ample. It's well known that $1 \le K_S^2 \le 9$. We will carry our investigation, considering  different values of $K_S^2$.

 In the next theorem we study the case $K_S^2=9$. In this case $S \simeq \mathbb{P}^2$.
   \begin{theorem}
\label{K_S9}
      Let $(S,G) \in \mathbb{D}$, $K_S^2=9$, and $G$ be a finite nonsolvable group. Then $S \simeq \mathbb{P}^2$ with the coordinates $(x_0:x_1:x_2)$, and $G$ is any finite nonsolvable subgroup of $\Aut(\mathbb{P}^3)\simeq PGL(3,\mathbb{C})$. The subgroup $G \subset PGL(3,\mathbb{C})$ can be conjugated to one of the following subgroups.
     \begin{enumerate}
          \item $H$ is a  group, consisting of maps
          $$(x_0:x_1:x_2)\mapsto (ax_0+bx_1:cx_0+dx_1:x_2).$$
    The image of matrices
   $$\begin{pmatrix}
         a & b\\
            c & d
   \end{pmatrix}
\in GL(2, \mathbb{C})     $$
    in $PGL(2,\mathbb{C})$ under the natural projection $GL(2,\mathbb{C}) \rightarrow PGL(2,\mathbb{C})$ is isomorphic to $A_5$. The group $H$ is isomorphic to  $\mathbb{Z}_n \times \bar{A}_5$, $n\ge 1$.

    \item The icosahedral group $A_5$ isomorphic to $L_2(5)$. It leaves invariant a nonsingular conic $C \subset \mathbb{P}^2$.
     \item The Klein group isomorphic to $L_2(7)$. This group is realized as automorphism group of the Klein's quartic      $x_0^3x_1+x_1^3x_2+x_2^3x_0=0$.
        \item The Valentiner group isomorphic to $A_6$. It can be realized as automorphism group of the nonsingular plane sextic
   $$
    10x_0^3x_1^3+ 9x_2x_0^5 + x_1^6-45x_0^2 x_1^2 x_2^2 -135 x_0x_1 x_2^4 +27 x_2^6=0.
     $$
 \end{enumerate}
    \end{theorem}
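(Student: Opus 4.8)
The plan is to reduce the statement to a group-theoretic classification and then invoke the classical theory of ternary linear groups. Since $S$ is a del Pezzo surface with $K_S^2 = 9$ it is isomorphic to $\mathbb{P}^2$, so $\Aut(S) \simeq PGL(3,\mathbb{C})$; moreover $\Pic(\mathbb{P}^2) = \mathbb{Z}$, so the condition $\Pic(S)^G \simeq \mathbb{Z}$ holds automatically and $\mathbb{P}^2$ is $G$-minimal for every $G$. Hence the task is to list, up to conjugacy, the finite nonsolvable subgroups $G \subset PGL(3,\mathbb{C})$. I would pass to the preimage $\tilde{G}$ of $G$ in $SL(3,\mathbb{C})$ under the projection $SL(3,\mathbb{C}) \to PGL(3,\mathbb{C})$ (whose kernel is $\mu_3$), a finite group acting linearly on $V = \mathbb{C}^3$, and study this $3$-dimensional representation; by Maschke's theorem it is completely reducible, and the invariant subspaces for $G$ and for $\tilde{G}$ coincide. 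The usual trichotomy then applies: the representation is reducible, or irreducible and imprimitive, or primitive.

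In the reducible case $V$ decomposes into $G$-invariant summands. A splitting into three lines makes $G$ a group of diagonal-by-permutation (monomial) matrices, which is solvable; so for nonsolvable $G$ the only option is $V = V_1 \oplus V_2$ with $\dim V_1 = 1$ and $V_2$ an irreducible plane. Then $G$ fixes the point $(0:0:1) = \mathbb{P}(V_1)$ and the line $\{x_2 = 0\} = \mathbb{P}(V_2)$, and its image in $PGL(V_2) \simeq PGL(2,\mathbb{C})$ is finite and nonsolvable. As the icosahedral $A_5$ is the only nonsolvable finite subgroup of $PGL(2,\mathbb{C})$, the action on $V_2$ is realized by the binary icosahedral group $\bar{A}_5 \subset SL(2,\mathbb{C})$ (since $A_5$ itself has no faithful $2$-dimensional representation); tracking the character on $V_1$ and quotienting by the $GL(3)$-scalars produces, after a short computation of the resulting central extension, the group $H \simeq \mathbb{Z}_n \times \bar{A}_5$ of case (1). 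In the irreducible imprimitive case $V = L_1 \oplus L_2 \oplus L_3$ with the lines permuted transitively, so $G$ is again monomial, an extension of a diagonal abelian group by a transitive subgroup of $S_3$; all such $G$ are solvable and are discarded.

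It remains to treat primitive $G$, where I would appeal to Blichfeldt's classical classification of primitive finite subgroups of $SL(3,\mathbb{C})$. The structural key is that a primitive group has no noncentral normal abelian subgroup---by Clifford theory its eigenspaces would furnish an imprimitivity decomposition---so modulo its centre $G$ is quasisimple, and the finite simple groups with a faithful $3$-dimensional complex projective representation are exactly $A_5$, $L_2(7)$, and $A_6$ (the last through its triple cover $3.A_6 \subset SL(3,\mathbb{C})$). These give the nonsolvable primitive subgroups; the remaining primitive groups, the Hessian group of order $216$ and its subgroups, are solvable. To pin down each conjugacy class I would exhibit the canonical invariant locus: a nonsingular invariant conic for $A_5 \cong L_2(5)$ (case (2)), the Klein quartic $x_0^3 x_1 + x_1^3 x_2 + x_2^3 x_0 = 0$ for $L_2(7)$ (case (3)), and the invariant Valentiner sextic from the statement for $A_6$ (case (4)); uniqueness, up to the outer automorphism interchanging the two $3$-dimensional representations, of these representations yields conjugacy of any two realizations.

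The main obstacle is the primitive case: the classification of primitive ternary linear groups is the genuinely deep classical input, and a self-contained proof would demand a detailed analysis of the normal-subgroup structure together with the character tables of the candidate simple groups. Because this section is built on \cite{Dolgachev-Iskovskikh}, I would cite that classification rather than reprove it, and devote the remaining effort to verifying that each listed group is nonsolvable and does occur in $PGL(3,\mathbb{C})$, and to writing down the invariant equations and checking their nonsingularity, which certifies the stated geometric models.
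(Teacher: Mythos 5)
Your proposal is correct and, in substance, rests on the same foundation as the paper's proof: the classical (Blichfeldt/Dolgachev--Iskovskikh) classification of finite subgroups of $PGL(3,\mathbb{C})$. The difference is one of packaging. The paper's proof is a pure citation --- it invokes \cite[Corollary 4.6, Theorems 4.7, 4.8]{Dolgachev-Iskovskikh} for the whole classification and \cite[Lemma 4.5, case (i)]{Dolgachev-Iskovskikh} for the isomorphism $H \simeq \mathbb{Z}_n \times \bar{A}_5$ --- whereas you reconstruct the elementary layers yourself (automatic $G$-minimality of $\mathbb{P}^2$, the reducible case yielding case (1) via the fact that $A_5$ has no faithful $2$-dimensional representation, the discarding of monomial groups as solvable) and cite the classical literature only for the primitive case. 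What your route buys is a self-contained derivation of case (1), which the paper outsources to Lemma 4.5(i); what it costs is one inaccurate structural claim: it is \emph{not} true that a primitive finite subgroup of $SL(3,\mathbb{C})$ with no noncentral normal abelian subgroup must be quasisimple modulo its centre --- the Hessian group of order $216$ is primitive and has a noncentral normal \emph{nonabelian} (extraspecial) $3$-subgroup, so Clifford theory on abelian normal subgroups alone does not force quasisimplicity. Since you explicitly defer to Blichfeldt's classification rather than reprove it, and you do list the Hessian group among the solvable primitive groups, this slip is not load-bearing; but if you ever wanted the primitive analysis to be self-contained, that is exactly the step that would need the extra argument about normal $p$-subgroups.
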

 \begin{proof}
   The statement follows directly from \cite[Corollary 4.6, Theorems 4.7, 4.8]{Dolgachev-Iskovskikh}. We need only to check the isomorphism $H \simeq \mathbb{Z}_n \times \bar{A}_5$, $n\ge 1$ in the first case of  theorem. It follows from \cite[Lemma 4.5, case (i)]{Dolgachev-Iskovskikh}.
\end{proof}

 In the next theorem we consider the case $K_S^2=8$.
 \begin{theorem}
  \label{K_S8}
      Let $(S,G) \in \mathbb{D}$, $K_S^2=8$, and $G$ be a finite nonsolvable group. Then $S \simeq \mathbb{F}_0 \simeq \mathbb{P}^1 \times \mathbb{P}^1$ with the coordinates $(x_0:x_1,t_0:t_1)$. We will employ definition of the group $St(A_5)$ (see Notation \ref{stA5}), and define involution
    $$\tau: (x_0:x_1,t_0:t_1)\mapsto (t_0:t_1, x_0:x_1).$$
We have the following possibilities for $G$.
            \begin{enumerate}
  \item The subgroup $G \subset \Aut(\mathbb{P}^1 \times \mathbb{P}^1)$ is conjugate to the subgroup $St(A_5)\wr \langle \tau \rangle$.
   \item The subgroup $G \subset \Aut(\mathbb{P}^1 \times \mathbb{P}^1)$ is conjugate to the subgroup $H\times \langle \tau \rangle$, where $H$ is the image of the diagonal embedding of $St(A_5)$ in $PGL(2,\mathbb{C}) \times PGL(2,\mathbb{C})$.
      \end{enumerate}
  \end{theorem}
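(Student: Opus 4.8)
The plan is to first pin down the surface, then use $G$-minimality to force a swap of the two rulings, and finally run a Goursat-type analysis on the index-two subgroup that preserves both rulings. Among del Pezzo surfaces with $K_S^2=8$ only $\mathbb{P}^1\times\mathbb{P}^1$ and the Hirzebruch surface $\mathbb{F}_1$ occur. The surface $\mathbb{F}_1$ carries a unique $(-1)$-curve, which is therefore $G$-invariant and can be contracted $G$-equivariantly; this contradicts $G$-minimality (equivalently, it forces $\Pic(S)^G$ to have rank $\ge 2$). Hence $S\simeq\mathbb{P}^1\times\mathbb{P}^1$, and I record the standard description $\Aut(\mathbb{P}^1\times\mathbb{P}^1)\simeq(PGL(2,\mathbb{C})\times PGL(2,\mathbb{C}))\rtimes\langle\tau\rangle$, where $\tau$ is the involution swapping the two factors.

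Write $\Pic(S)=\mathbb{Z}f_1\oplus\mathbb{Z}f_2$ for the two ruling classes. The subgroup $PGL(2,\mathbb{C})\times PGL(2,\mathbb{C})$ fixes both $f_1$ and $f_2$, while $\tau$ interchanges them, so the hypothesis $\Pic(S)^G\simeq\mathbb{Z}$ forces $G$ to surject onto $\langle\tau\rangle$. Set $G_0=G\cap(PGL(2,\mathbb{C})\times PGL(2,\mathbb{C}))$, a normal subgroup of index two. Since $G/G_0\simeq\mathbb{Z}_2$ is solvable and $G$ is not, $G_0$ is nonsolvable. Its two projections $p_i(G_0)\subset PGL(2,\mathbb{C})$ are finite, and the only nonsolvable finite subgroup of $PGL(2,\mathbb{C})$ is the icosahedral $A_5\simeq St(A_5)$. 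Picking any $g\in G\setminus G_0$ and writing $g=(\alpha,\beta)\tau$, conjugation of $G_0$ by $g$ interchanges the two projections up to the inner automorphisms $c_\alpha,c_\beta$; hence $p_1(G_0)$ and $p_2(G_0)$ are conjugate, and both equal a copy of $St(A_5)$. Conjugating each factor separately, I standardise $p_1(G_0)=p_2(G_0)=St(A_5)$.

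By Goursat's lemma $G_0$ is determined by a normal subgroup of $St(A_5)\simeq A_5$, and simplicity leaves exactly two possibilities: either $G_0=St(A_5)\times St(A_5)$, or $G_0$ is the graph $\{(a,\phi(a)):a\in St(A_5)\}$ of an isomorphism $\phi$ of $St(A_5)$. The condition $gG_0g^{-1}=G_0$ forces $\alpha$ and $\beta$ to normalise $St(A_5)$; since the normaliser of $A_5$ in $PGL(2,\mathbb{C})$ is $A_5$ itself (its centraliser is trivial by Schur's lemma, and no strictly larger finite subgroup of $PGL(2,\mathbb{C})$ exists), we get $\alpha,\beta\in St(A_5)$, whence $(\alpha,\beta)\in G_0$ and therefore $\tau\in G$. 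In the product case this yields $G=(St(A_5)\times St(A_5))\rtimes\langle\tau\rangle=St(A_5)\wr\langle\tau\rangle$, which is case $(1)$. In the diagonal case the relation $gG_0g^{-1}=G_0$ together with $g^2\in G_0$ forces $c_\beta=\phi c_\alpha\phi$ and $\beta\alpha=\phi(\alpha\beta)$; once $\phi$ is the identity this gives $\alpha=\beta$ and hence again $\tau\in G$, and since $\tau$ centralises the straight diagonal one obtains $G\simeq H\times\langle\tau\rangle$, which is case $(2)$.

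The step I expect to be the main obstacle is the very last one: controlling the linking isomorphism $\phi$ in the diagonal case. Conjugation by $PGL(2,\mathbb{C})\times PGL(2,\mathbb{C})$ only alters $\phi$ by inner automorphisms on each side, and conjugation by the swap replaces $\phi$ by $\phi^{-1}$, so a priori the class of $\phi$ in $\operatorname{Out}(A_5)\simeq\mathbb{Z}_2$ looks like an invariant; the crux is to show that the linking isomorphism can always be normalised to the identity, so that $G_0$ is honestly the straight diagonal and $H$ is its image. I would attack this by carefully bookkeeping which automorphisms of $St(A_5)$ are realised by the conjugations available inside $\Aut(\mathbb{P}^1\times\mathbb{P}^1)$, exploiting the relations $c_\beta=\phi c_\alpha\phi$ and $\beta\alpha=\phi(\alpha\beta)$ coming from $g\in G$ and $g^2\in G_0$, and using the interaction with $\tau$ to absorb the potential outer twist; this is precisely the point where the two listed conjugacy classes, rather than a third one, must be shown to exhaust the possibilities.
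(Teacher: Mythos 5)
Up to its final step your argument coincides with the paper's proof: elimination of $\mathbb{F}_1$, passage to the index-two subgroup $G_0=G\cap(PGL(2,\mathbb{C})\times PGL(2,\mathbb{C}))$, Goursat's lemma, and the normalizer computation forcing $\alpha,\beta\in St(A_5)$ (your justification of the normalizer claim via Schur's lemma plus Klein's classification is in fact more careful than the paper's one-line appeal to maximality). But the step you flag in the diagonal case is a genuine gap, and it cannot be closed, because the obstruction you identified is real: the class of the linking isomorphism $\phi$ in $\operatorname{Out}(A_5)\simeq\mathbb{Z}_2$ \emph{is} a conjugacy invariant. Since the normalizer of $St(A_5)$ in $PGL(2,\mathbb{C})$ is $St(A_5)$ itself, conjugation by elements of $PGL(2,\mathbb{C})\times PGL(2,\mathbb{C})$ changes $\phi$ only by inner automorphisms, and conjugation by elements of the $\tau$-coset replaces $\phi$ by $\phi^{-1}$, which lies in the same outer class; hence a diagonal twisted by an outer $\phi$ is never conjugate to the straight diagonal. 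Moreover the twisted case does occur in a minimal pair: writing $\phi^2=c_h$ (inner, since $\operatorname{Out}(A_5)\simeq\mathbb{Z}_2$), the element $g=(1,h)\tau$ normalizes $G_0^{\phi}=\{(a,\phi(a))\}$, satisfies $g^2=(h,h)\in G_0^{\phi}$, and acts on $G_0^{\phi}\simeq A_5$ by an outer automorphism; as $A_5$ has trivial center, the resulting extension $G=\langle G_0^{\phi},g\rangle$ is the unique one with outer action, i.e.\ $G\simeq S_5$. Since $g$ interchanges the rulings, $\Pic(S)^G\simeq\mathbb{Z}$, so $(\mathbb{P}^1\times\mathbb{P}^1,S_5)\in\mathbb{D}$ with $K_S^2=8$. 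Concretely this is nothing exotic: it is the restriction to the quadric $\{\sum x_i=\sum x_i^2=0\}\subset\mathbb{P}^3$ of the standard four-dimensional representation of $S_5$; its restriction to $A_5$ is the irreducible four-dimensional representation, which is the tensor product of the two distinct two-dimensional representations of $\bar{A}_5$ --- precisely an outer-twisted diagonal --- and odd permutations have determinant $-1$, hence swap the two rulings.

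So the conclusion is the opposite of what you hoped to establish: no bookkeeping of available conjugations will normalise $\phi$ to the identity, and the two listed classes do not exhaust the possibilities --- a third class, with $G\simeq S_5$ and $G_0$ the outer-twisted diagonal, must be added (note $S_5$ is isomorphic to neither $St(A_5)\wr\langle\tau\rangle$ nor $H\times\langle\tau\rangle\simeq A_5\times\mathbb{Z}_2$, so it is genuinely a new conjugacy class). You should also be aware that the paper's own proof contains exactly the same jump: after Goursat it asserts that for $D\simeq A_5$ the group $H$ ``is conjugated to the image of diagonal embedding of $St(A_5)$,'' which is unjustified and false when the Goursat isomorphism is outer. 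In short, your proposal fails only at the step you yourself singled out as the crux, and it fails because the statement as formulated is incomplete, not because you overlooked an argument available in the paper.
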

              \begin{proof}
  One knows that if $S$ is a del Pezzo surface with $K_S^2=8$ then $S \simeq \mathbb{F}_0$ or $\mathbb{F}_1$. However in the second case the exceptional section of ruled surface $\mathbb{F}_1$ is $G$-invariant. Therefore the pair $(\mathbb{F}_1,G)$ is not $G$-minimal. Hence $S\simeq \mathbb{F}_0 \simeq \mathbb{P}^1 \times \mathbb{P}^1$.

     It's well known that $\Aut(\mathbb{P}^1 \times \mathbb{P}^1)\simeq  PGL(2,\mathbb{C}) \wr \langle \tau \rangle$. Whence $G$ is generated by a nonsolvable subgroup $H \subset PGL(2,\mathbb{C}) \times PGL(2,\mathbb{C})$ and by an element $\eta=\mu \circ \tau$, where $\mu \in PGL(2,\mathbb{C}) \times PGL(2,\mathbb{C})$.  We write $\mu=(B,B')$, where $B$, $B'\in PGL(2,\mathbb{C})$. For any $\varsigma= (A,A')\in PGL(2,\mathbb{C}) \times PGL(2,\mathbb{C})$ we have
  \begin{equation}
   \label{teh1}
         \eta \circ \varsigma \circ \eta^{-1}=(BA'B^{-1} , B'AB'^{-1}).
   \end{equation}
Let's study the structure of  group $H$, applying Goursat's Lemma (see \cite[Lemma 4.1]{Dolgachev-Iskovskikh}). Consider projections $\pi_i: PGL(2,\mathbb{C}) \times PGL(2,\mathbb{C}) \rightarrow PGL(2,\mathbb{C})$, $i=1,2$ on the first and the second factor respectively. We get $H \simeq \pi_1(H) \triangle_D \pi_2(H)$, where $D \simeq \Imm(\pi_1|H)/\Ker(\pi_2|H)$. Obviously, either $\Imm(\pi_1|H) \simeq A_5$ or $\Imm(\pi_2|H)\simeq A_5$ (see Klein's classification of finite nonsolvable subgroups in $PGL(2,\mathbb{C})$ in \cite[Section 5.5]{Dolgachev-Iskovskikh}).
The group $A_5$ is simple. Therefore $D \simeq 1$ or $A_5$.

   Suppose that $D\simeq 1$. From \eqref{teh1} we get $\Imm(\pi_1|H) \simeq \Imm(\pi_2|H)\simeq A_5$. Hence $H$ can be conjugated to $St(A_5) \times St(A_5)$. We will prove that $B$, $B'\in St(A_5)$. Suppose that it doesn't holds.
Then from  \eqref{teh1} we get $St(A_5) \rtimes B$, $St(A_5) \rtimes B' \subset PGL(2,\mathbb{C})$. But $St(A_5)$ is a maximal finite subgroup of $PGL(2, \mathbb{C})$. Contradiction. We get the first case of the theorem.

   Suppose that $D\simeq A_5$. Then $H$ is conjugated to the image of  diagonal embedding of $St(A_5)$ in $PGL(2,\mathbb{C}) \times PGL(2,\mathbb{C})$. Arguing as above, we get $B$, $B' \in St(A_5)$. By \eqref{teh1} these elements define the same inner automorphism of $A_5$. Hence $B=B'$. We get the second case of the theorem.
             \end{proof}

 In the next theorem we consider  cases: $K_S^2=7$, $K_S^2=6$, $K_S^2=4$, and $K_S^2=1$.
   \begin{theorem}
  \label{thdp3}
    There are no  surfaces $(S,G)\in \mathbb{D}$, such that  $K_S^2$ is equal to either $7$, or $6$, or $4$ , or $1$,  and $G$ is a finite nonsolvable group.
      \end{theorem}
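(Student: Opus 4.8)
My plan is to prove the stronger assertion that for every del Pezzo surface $S$ with $K_S^2 \in \{7,6,4,1\}$ the full automorphism group $\Aut(S)$ is solvable. Since the action of $G$ gives an embedding $G \hookrightarrow \Aut(S)$, a solvable $\Aut(S)$ leaves no room for a nonsolvable $G$; in particular the hypotheses that $(S,G)$ be $G$-minimal and that $\Pic(S)^G \simeq \mathbb{Z}$ are never even needed. Thus the theorem reduces to four determinations of $\Aut(S)$, for which I would rely on the structural description of automorphism groups of del Pezzo surfaces in \cite{Dolgachev-Iskovskikh}.

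For $K_S^2 = 7$ the surface is the blow-up of $\mathbb{P}^2$ at two points $p_1,p_2$, and among its three $(-1)$-curves exactly one, the strict transform $L$ of the line $\overline{p_1 p_2}$, meets the other two. Being thus distinguished by the intersection pattern, $L$ is $\Aut(S)$-invariant, so after contracting $L$ the group $\Aut(S)$ descends, up to an index-two subgroup, into the stabilizer of $\{p_1,p_2\}$ in $PGL(3,\mathbb{C})$; a direct check shows this stabilizer sits inside a Borel (upper-triangular) subgroup and is therefore solvable. For $K_S^2 = 6$ the surface is the blow-up of $\mathbb{P}^2$ at three points, its six $(-1)$-curves form a hexagon, and $\Aut(S)$ is an extension of the torus $(\mathbb{C}^*)^2$ by the symmetry group of the hexagon (of order $12$); hence it is again solvable.

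For $K_S^2 = 4$ I would realize $S$ as a smooth intersection of two quadrics in $\mathbb{P}^4$, simultaneously diagonalized as $\sum_{i=0}^4 x_i^2 = \sum_{i=0}^4 \lambda_i x_i^2 = 0$. The sign changes $x_i \mapsto \pm x_i$ give a normal subgroup $(\mathbb{Z}_2)^4 \trianglelefteq \Aut(S)$, and the quotient is the group of coordinate permutations preserving $S$, i.e.\ the subgroup $\bar G \subset PGL(2,\mathbb{C})$ stabilizing the five branch points $[\lambda_i:1]\in\mathbb{P}^1$ of the pencil of quadrics. A finite subgroup of $PGL(2,\mathbb{C})$ preserving a five-element subset of $\mathbb{P}^1$ is cyclic or dihedral (the icosahedral, octahedral and tetrahedral orbits have sizes incompatible with five points), hence solvable; therefore $\Aut(S) \cong (\mathbb{Z}_2)^4 \rtimes \bar G$ is solvable.

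The case $K_S^2 = 1$ is the main obstacle. Here I would write $S$ in $\mathbb{P}(1,1,2,3)$ as $w^2 = z^3 + f_4(x,y)\,z + f_6(x,y)$ with binary forms $f_4,f_6$, and use the morphism defined by $|-2K_S|$, which realizes $S$ as a double cover of a quadric cone. Recording the induced action on the base $\mathbb{P}^1 = \{(x:y)\}$ of the cone yields a homomorphism $\rho \colon \Aut(S) \to PGL(2,\mathbb{C})$. Its kernel consists of automorphisms fixing the base pointwise, which act only by scaling and translating $z,w$ and contain the central Bertini involution $w \mapsto -w$; being a finite group of such triangular maps, the kernel is solvable. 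The image $\bar G = \rho(\Aut(S))$ must send the pair $(f_4,f_6)$ to itself up to the scalings dictated by the weights, so $f_4$ and $f_6$ are semi-invariants of a lift $\tilde G \subset SL(2,\mathbb{C})$ of $\bar G$. If $\bar G$ were nonsolvable it would contain $A_5$, the only nonsolvable finite subgroup of $PGL(2,\mathbb{C})$; its lift $\bar{A}_5$ is perfect, so its semi-invariants coincide with genuine invariants, and the lowest-degree invariant binary form of $\bar{A}_5$ has degree $12$. This forces $f_4 = f_6 = 0$, making $S$ singular---a contradiction. Hence $\bar G$ is solvable, $\Aut(S)$ is an extension of a solvable group by a solvable group, and no nonsolvable $G$ can act. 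The delicate points are precisely this degree-one invariant-theoretic step and the verification that $\ker \rho$ is solvable.
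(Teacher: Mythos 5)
Your proposal is correct, but it takes a genuinely different route from the paper's. The paper disposes of $K_S^2=7$ by a minimality argument: the strict transform of the line through the two blown-up points is a $G$-invariant $(-1)$-curve, so no degree-7 del Pezzo surface is ever $G$-minimal (no hypothesis on $G$ is needed); for $K_S^2=6,4,1$ it simply cites the automorphism classification of \cite{Dolgachev-Iskovskikh} (Theorems 6.3, 4.7, Theorem 6.9, Table 8). You instead prove the stronger fact that $\Aut(S)$ is solvable in all four degrees, which renders the $G$-minimality and $\Pic(S)^G\simeq\mathbb{Z}$ hypotheses irrelevant; your degree-4 computation (monomial matrices, normal $2^4$, image inside the stabilizer of five points of $\mathbb{P}^1$, necessarily cyclic or dihedral since $A_4$, $S_4$, $A_5$ have no fixed points and no orbits of size $\le 5$) and your degree-1 computation (solvable triangular kernel of $\rho$, and a nonsolvable image would force $f_4,f_6$ to be $\bar{A}_5$-invariants of degree less than $12$, hence zero, making $S$ singular) are precisely the arguments underlying the tables you would otherwise cite, so you gain a self-contained proof at the cost of length. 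One small repair in degree 7: contracting $L$ yields $\mathbb{P}^1\times\mathbb{P}^1$, not $\mathbb{P}^2$; what you want is that invariance of $L$ forces invariance of the set $\{E_1,E_2\}$, and contracting those two curves equivariantly embeds $\Aut(S)$ into $\Stab_{PGL(3,\mathbb{C})}(\{p_1,p_2\})$, which is solvable exactly as you say.
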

   \begin{proof}
  Let's consider the case $K_S^2=7$. The surface $S$ is presented as a blowing up of two different points in $\mathbb{P}^2$. However the strict transform of line, containing this two points, is a $G$-invariant rational $(-1)$-curve. Hence the surface $S$ is not $G$-minimal.

   The case $K_S^2=6$ follows directly from \cite[Theorem 6.3, Corollary 4.6, Theorem 4.7]{Dolgachev-Iskovskikh}.

    The case $K_S^2=4$ follows directly from \cite[Theorem 6.9]{Dolgachev-Iskovskikh}.

    The case $K_S^2=1$ follows directly from \cite[Table 8]{Dolgachev-Iskovskikh}.
\end{proof}
        In the next theorem we consider the case $K_S^2=5$.
  \begin{theorem}
 \label{thdp4}
     Let $(S,G) \in \mathbb{D}$, $K_S^2=5$, and $G$ be a finite nonsolvable group. Introduce on $\mathbb{P}^2$ the coordinates $(T_0:T_1:T_2)$.
Then the surface $S$ is isomorphic to the blowing up of $\mathbb{P}^2$ at points: $(0:0:1)$, $(0:1:0)$, $(1:0:0)$ and $(1:1:1)$.
The group $\Aut(S)$ is isomorphic to $S_5$, and is generated by the  maps:
     \begin{equation}
    \label{K_S5eq}
         \begin{aligned}
      &(T_0:T_1:T_2)\mapsto (T_1:T_2:T_0),\\
        &(T_0:T_1:T_2)\mapsto (T_2:-T_0+T_2:-T_1+T_2),\\
        &(T_0:T_1:T_2)\mapsto (T_0(T_2-T_1):T_2(T_0-T_1):T_0T_2).
             \end{aligned}
              \end{equation}
The subgroup $G \subset \Aut(S)$ is isomorphic to $A_5$ or $S_5$.
   \end{theorem}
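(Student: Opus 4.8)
The plan is to pin down $S$, compute $\Aut(S)$, check the three displayed maps, and then read off which nonsolvable subgroups arise and are $G$-minimal. First I would invoke the classification of del Pezzo surfaces: every del Pezzo surface of degree $5$ is the blowup of $\mathbb{P}^2$ at four points in general position, and $PGL(3,\mathbb{C})$ acts transitively on ordered quadruples of points in general position. Hence I may normalize the base points to $(0:0:1)$, $(0:1:0)$, $(1:0:0)$, $(1:1:1)$, exactly as in the statement. This gives $\Pic(S)\simeq\mathbb{Z}^5$, with $K_S^\perp$ isomorphic to the root lattice of type $A_4$.

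Next I would identify $\Aut(S)$ with $S_5$. The surface carries exactly ten $(-1)$-curves, namely the four exceptional divisors and the six strict transforms of the lines through pairs of base points; their intersection pattern is the Petersen graph, whose automorphism group is $S_5$. Equivalently, $\Aut(S)$ acts faithfully on $\Pic(S)$ preserving $K_S$ and the intersection form, so it embeds into the Weyl group $W(A_4)\simeq S_5$, and for degree $5$ every such isometry is realized by an automorphism. I would cite the corresponding statement of \cite{Dolgachev-Iskovskikh} for this isomorphism $\Aut(S)\simeq S_5$.

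Then I would verify the three generators. The first two maps are linear, so it suffices to substitute the four base points: a direct check shows the first map induces the $3$-cycle fixing $(1:1:1)$ and rotating the coordinate vertices, while the second induces a $4$-cycle on the quadruple. These two permutations already generate the full symmetric group $S_4$ of projective permutations of the four points, and each lifts to an automorphism of $S$. The third map is the standard quadratic Cremona transformation based at three of the four points; it extends to a biregular automorphism of $S$ and, being genuinely quadratic, does not lie in the subgroup $S_4$ of linear point-permutations. Since $S_4$ is a maximal subgroup of $S_5\simeq\Aut(S)$, the three maps together generate all of $\Aut(S)$.

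Finally, since $G\subset\Aut(S)\simeq S_5$ is nonsolvable, $G$ is either $A_5$ or $S_5$, these being the only nonsolvable subgroups of $S_5$. To confirm that both genuinely belong to $\mathbb{D}$, I would check $G$-minimality by computing $\Pic(S)^G$. As an $S_5$-module the lattice $K_S^\perp\simeq A_4$ is the standard $4$-dimensional representation, which contains no trivial summand; its restriction to $A_5$ is the irreducible $4$-dimensional representation and likewise has no invariants. Hence $\Pic(S)^G=\mathbb{Z}K_S\simeq\mathbb{Z}$ for $G=A_5$ and $G=S_5$, so $(S,G)\in\mathbb{D}$ in both cases. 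The main obstacle I expect is the third map: I must resolve its indeterminacy and track the images of the four base points carefully enough to certify that the quadratic transformation really extends to a biregular automorphism of $S$, which is exactly what licenses the maximality argument that upgrades $S_4$ to all of $S_5$.
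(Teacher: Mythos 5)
Your proposal is correct, and it is worth noting how it relates to the paper's treatment: the paper disposes of Theorem \ref{thdp4} in one line, citing \cite[Theorem 6.4]{Dolgachev-Iskovskikh} for the isomorphism $\Aut(S)\simeq S_5$ and the arguments of \cite[Theorem 8.4.15]{DolgachevTopics} for the explicit generators \eqref{K_S5eq}, whereas you reconstruct the content of those citations from scratch. Your route is the standard one underlying those references --- uniqueness of the degree-$5$ del Pezzo surface via transitivity of $PGL(3,\mathbb{C})$ on ordered quadruples of points in general position, the identification $\Aut(S)\simeq S_5$ through the Petersen graph of the ten $(-1)$-curves (equivalently the embedding into the Weyl group $W(A_4)$, which is surjective in degree $5$), the check that the two linear maps induce a $3$-cycle and a $4$-cycle and hence generate the $S_4$ of linear point permutations, and the observation that the quadratic map, having degree $2$ as a Cremona transformation, lies outside this $S_4$, so maximality of $S_4$ in $S_5$ finishes the generation argument. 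You also supply a step the paper leaves entirely implicit: the verification that both $A_5$ and $S_5$ actually yield pairs in $\mathbb{D}$, via the fact that $K_S^\perp$ is the standard $4$-dimensional representation of $S_5$, which remains irreducible upon restriction to $A_5$, so that $\Pic(S)^G\simeq \mathbb{Z}K_S$ in both cases. This addition is genuinely useful, since the theorem as stated asserts $(S,G)\in\mathbb{D}$ and not merely $G\subset\Aut(S)$. In short, the paper's approach buys brevity by deferring to the literature; yours buys a self-contained and checkable argument, at the cost of having to resolve the indeterminacy of the quadratic map to certify it is biregular on $S$ --- the one point you correctly flag as the technical crux, and which is handled in \cite{DolgachevTopics}.
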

  \begin{proof}
         This follows directly from \cite[Theorem 6.4]{Dolgachev-Iskovskikh} and arguments of \cite[Theorem 8.4.15]{DolgachevTopics}.
\end{proof}
        In the next theorem we consider the case $K_S^2=3$.
     \begin{theorem}
   \label{thdp2}
 Let $(S,G) \in \mathbb{D}$, $K_S^2=3$, and $G$ be a finite nonsolvable group. Then the surface $S$ can be represented by the following equations in $\mathbb{P}^3$ with the coordinates $(T_0:T_1:T_2:T_3)$:
        $$T_0^2 T_1+T_1^2T_2+T_2^2T_3+T_3^2T_0=0. $$
 The group $G$ is isomorphic to $S_5$ and is generated by the following maps:
              \begin{equation}
        \label{K_S3eq}
       \begin{aligned}
    &(T_0:T_1:T_2:T_3)\mapsto (T_0:\varepsilon_5^4 T_1:\varepsilon_5 T_2:\varepsilon_5^2 T_3),\\
    &(T_0:T_1:T_2:T_3)\mapsto (T_1:T_2:T_3:T_0).
   \end{aligned}
   \end{equation}
    \end{theorem}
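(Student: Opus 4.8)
The plan is to reduce to a smooth cubic surface, use the classification of its automorphisms to pin down the surface, and then separate the two candidate nonsolvable groups by looking at their action on the Picard lattice. Since $K_S^2=3$ and $-K_S$ is ample, the anticanonical map embeds $S$ as a smooth cubic surface in $\mathbb{P}^3$. I would invoke the classification of finite automorphism groups of smooth cubic surfaces from \cite{Dolgachev-Iskovskikh}: among all smooth cubics the only one with a nonsolvable automorphism group is the Clebsch diagonal cubic, for which $\Aut(S)\cong S_5$ (all other cases, e.g. the Fermat cubic, have solvable automorphism groups). Hence $G\subseteq\Aut(S)\cong S_5$, and it remains to normalize $S$ to the displayed equation. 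For this I would check directly that the two displayed maps are automorphisms of $T_0^2T_1+T_1^2T_2+T_2^2T_3+T_3^2T_0=0$: the diagonal map $\mathrm{diag}(1,\varepsilon_5^4,\varepsilon_5,\varepsilon_5^2)$ multiplies the whole cubic form by $\varepsilon_5^4$, and the cyclic permutation fixes it. Thus this equation carries a faithful $S_5$-action, and since the cubic surface with automorphism group $S_5$ is unique up to projective equivalence, this is the Clebsch cubic in the stated guise.

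Next I would decide which nonsolvable subgroup of $S_5$ yields a $G$-minimal pair. Every nonsolvable subgroup of $S_5$ is either $A_5$ or $S_5$, so the task reduces to computing the rank of $\Pic(S)^G$ for these two groups. I would describe the action of $\Aut(S)=S_5$ on $\Pic(S)\otimes\mathbb{Q}=\mathbb{Q}K_S\oplus(K_S^{\perp}\otimes\mathbb{Q})$ through its action on the $27$ lines (which split into $S_5$-orbits of sizes $15$ and $12$, with line stabilizers of order $8$ and $10$), decompose the resulting $27$-dimensional permutation module into irreducible $S_5$-modules, and read off which constituents can occur in $\Pic(S)\otimes\mathbb{Q}$, a quotient (hence subrepresentation) of it. Since the Clebsch cubic is $\Aut(S)$-minimal, $\Pic(S)^{S_5}$ has rank $1$ and $K_S^{\perp}\otimes\mathbb{Q}$ is a $6$-dimensional module with no trivial summand. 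The decisive point is that the irreducible $6$-dimensional representation of $S_5$ does \emph{not} occur in the line module; a $6$-dimensional module with no trivial summand and not containing that irreducible must therefore contain the sign representation. Consequently $(S,S_5)\in\mathbb{D}$, but upon restriction to $A_5$ the sign representation becomes trivial, so $K_S^{\perp}\otimes\mathbb{Q}$ acquires a nonzero $A_5$-invariant and $\Pic(S)^{A_5}$ has rank at least $2$. Hence $(S,A_5)$ is not $A_5$-minimal, which forces $G\cong S_5$.

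Finally I would confirm that the two displayed maps generate $\Aut(S)=S_5$ rather than a proper subgroup. The diagonal map has order $5$ and the cyclic permutation has order $4$; the only subgroups of $S_5$ containing both an element of order $5$ and an element of order $4$ are the solvable Frobenius group $F_{20}$ and $S_5$ itself (note that $A_5$ has no element of order $4$). Thus it suffices to produce from the two generators one element of order $3$ (or $6$) in order to exclude $F_{20}$ and conclude $G=S_5$. I expect the main obstacle to be the second step, namely identifying the $S_5$-module structure of $\Pic(S)$ precisely enough to rule out the irreducible $6$-dimensional representation and thereby exhibit the sign summand in $K_S^{\perp}$, since this is exactly what breaks the symmetry between $A_5$ and $S_5$; the remaining steps are either a direct citation to \cite{Dolgachev-Iskovskikh} or routine verifications on the explicit equation.
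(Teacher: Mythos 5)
Your route is genuinely different from the paper's proof, which is nothing but the citation ``follows directly from \cite[Theorem 6.14]{Dolgachev-Iskovskikh}''; you instead reconstruct the argument, and the heart of your proposal --- excluding $A_5$ by finding the sign representation inside $K_S^{\perp}\otimes\mathbb{Q}$ --- is correct. I can confirm the fact you flagged as the expected obstacle: for the Clebsch cubic the permutation module on the $27$ lines (orbits of sizes $15$ and $12$, stabilizers $D_4$ and $D_5$) decomposes as two trivials, one sign, one copy of the standard $4$-dimensional representation, and two copies of each $5$-dimensional irreducible; the $6$-dimensional irreducible does not occur. Hence $K_S^{\perp}\otimes\mathbb{Q}$ must be $\mathrm{sgn}\oplus(\text{a }5\text{-dimensional irreducible})$, the sign becomes trivial on $A_5$, so $\Pic(S)^{A_5}$ has rank at least $2$ and $(S,A_5)\notin\mathbb{D}$. (Geometrically: $A_5$ preserves each six of the double-six, and the Clebsch blows down $A_5$-equivariantly to $\mathbb{P}^2$.)

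There is, however, a genuine gap in your first and last steps, and your last step cannot be repaired in the form proposed: the two displayed maps do \emph{not} generate $S_5$. Write $a=\mathrm{diag}(1,\varepsilon_5^4,\varepsilon_5,\varepsilon_5^2)$ and let $b$ be the cyclic permutation of coordinates. Conjugation by $b$ cyclically permutes the diagonal entries of $a$, and in $PGL(4,\mathbb{C})$
\[
bab^{-1}=\mathrm{diag}(\varepsilon_5^4,\varepsilon_5,\varepsilon_5^2,1)=\mathrm{diag}(1,\varepsilon_5^2,\varepsilon_5^3,\varepsilon_5)=a^3,
\]
so $b$ normalizes $\langle a\rangle$ and $\langle a,b\rangle\simeq\mathbb{Z}_5\rtimes\mathbb{Z}_4$ is exactly the Frobenius group $F_{20}$: solvable, of order $20$, with no element of order $3$. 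Your planned verification (producing an element of order $3$ from words in the two generators) is therefore impossible; indeed the generation claim in the theorem as literally stated is erroneous (apparently inherited from the source): $\Aut(S)\simeq S_5$ \emph{contains} these two maps but is not generated by them --- a third, non-monomial, generator is needed. For the same reason your intermediate inference ``thus this equation carries a faithful $S_5$-action'' is unjustified, since exhibiting the two automorphisms only yields an $F_{20}$-action. The identification of the displayed surface with the Clebsch cubic can still be rescued by a different argument: check that the displayed cubic is smooth (a step you omit, and which is needed to invoke the classification at all), and then observe that in the classification of automorphism groups of smooth cubic surfaces the Clebsch is the only one whose group has order divisible by $5$; the order-$5$ automorphism $a$ alone then forces $S$ to be the Clebsch, whence $\Aut(S)\simeq S_5$ and your middle argument applies.
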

  \begin{proof}
  This follows directly from \cite[Theorem 6.14]{Dolgachev-Iskovskikh}.
   \end{proof}
  In the next theorem we consider the case $K_S^2=2$.
     \begin{theorem}
\label{thdp1}
 Let $(S,G) \in \mathbb{D}$, $K_S^2=2$, and $G$ be a finite nonsolvable group. Then the surface $S$ can be represented by  the following equation in $\mathbb{P}(2,1,1,1)$ with the coordinates $(T_0:T_1:T_2:T_3)$:
   \begin{equation}
        \label{K_S2}
   T_3^2+T_0^3T_1+T_1^3T_2+T_2^3T_0=0.
    \end{equation}
  The group $\Aut(S)$ is isomorphic to $\mathbb{Z}_2 \times L_2(7)$.
    The subgroup $G \subset \Aut(S)$ is isomorphic to either  $L_2(7)$, or $\mathbb{Z}_2 \times L_2(7)$.
\end{theorem}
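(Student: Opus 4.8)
The plan is to use the standard realization of a degree-two del Pezzo surface as a double cover. Since $-K_S$ is ample with $K_S^2=2$, the anticanonical system $|-K_S|$ is base-point-free and defines a $G$-equivariant double cover $\pi\colon S\to\mathbb{P}^2$ branched along a smooth plane quartic $C$. Presenting this cover in $\mathbb{P}(2,1,1,1)$ gives an equation $T_3^2=F(T_0,T_1,T_2)$ with $F$ the quartic form cutting out $C$. The deck transformation $\gamma\colon T_3\mapsto -T_3$ (the Geiser involution) is central in $\Aut(S)$, and $\pi$ induces an exact sequence
\begin{equation*}
1\longrightarrow\langle\gamma\rangle\longrightarrow\Aut(S)\longrightarrow\bar{G}\longrightarrow 1,
\end{equation*}
where $\bar{G}\subseteq\Aut(C)\subset PGL(3,\mathbb{C})$ is the induced group of projective automorphisms preserving $C$.

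The next step is to pin down $C$ and $\bar{G}$. As $\langle\gamma\rangle$ is central of order two, $G$ (hence $\Aut(S)$) nonsolvable forces the image $\bar{G}$ to be a nonsolvable finite subgroup of $PGL(3,\mathbb{C})$ leaving invariant the smooth quartic $F$. I would then run through the list of nonsolvable finite subgroups of $PGL(3,\mathbb{C})$ recorded in Theorem \ref{K_S9}, comparing lowest-degree invariants: the conic-preserving $A_5$ has invariant ring generated in degrees $2,6,10,15$, so its only invariant quartics are scalar multiples of the square of the invariant conic and hence are singular; the Valentiner group $A_6$ has no invariant of degree below six; and the reducible groups fix a point and a complementary line, whose lowest binary invariant for the icosahedral factor has degree twelve, so the only invariant quartic is a quadruple line. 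Only $L_2(7)$ possesses a smooth invariant quartic, namely the Klein quartic $T_0^3T_1+T_1^3T_2+T_2^3T_0$, realizing $\bar{G}=\Aut(C)=L_2(7)$ and yielding equation \eqref{K_S2}.

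It then remains to determine the central extension, i.e.\ to decide between the split group $\mathbb{Z}_2\times L_2(7)$ and the binary group $2.L_2(7)\simeq SL(2,7)$. The decisive point is that $L_2(7)$ is simple and admits a three-dimensional irreducible representation $\rho\colon L_2(7)\to SL(3,\mathbb{C})$. Because $L_2(7)$ has no nontrivial one-dimensional character, each $\rho(g)$ fixes $F$ exactly rather than up to scalar, so $g\mapsto(\rho(g),1)$ is a well-defined lift to $\Aut(S)$; it is injective since $\rho$ is faithful and $-I\notin SL(3,\mathbb{C})$ prevents the weighted-scaling identification in $\mathbb{P}(2,1,1,1)$ from collapsing distinct elements. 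This splits the sequence and gives $\Aut(S)\simeq\langle\gamma\rangle\times L_2(7)\simeq\mathbb{Z}_2\times L_2(7)$. Finally, for the subgroup $G$: its image $\bar{G}$ is a nonsolvable subgroup of the simple group $L_2(7)$, hence is all of $L_2(7)$; a subgroup of $\mathbb{Z}_2\times L_2(7)$ surjecting onto $L_2(7)$ either meets the central factor trivially, forcing $G\simeq L_2(7)$ (as $L_2(7)$ has no homomorphism onto $\mathbb{Z}_2$), or contains it, giving $G\simeq\mathbb{Z}_2\times L_2(7)$.

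I expect the extension computation to be the main obstacle: a priori the Geiser involution could generate the nonsplit binary extension $2.L_2(7)$, and the split conclusion rests essentially on the existence of the genuine (not merely projective) three-dimensional representation of $L_2(7)$ together with its simplicity. The invariant-theoretic elimination in the second step is the other delicate point, for which I would lean on the invariant degrees recorded in \cite{Dolgachev-Iskovskikh}.
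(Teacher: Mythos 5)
Your proof is correct, but it takes a genuinely different route from the paper: the paper's entire proof of this theorem is a one-line citation of \cite[Theorem 6.17]{Dolgachev-Iskovskikh}, consistent with its announced policy that Section \ref{DelPezzo} contains no new results. What you have written is essentially a self-contained reconstruction of the argument behind that cited theorem. The ingredients all check out: the anticanonical map of a degree-two del Pezzo surface is indeed a double cover of $\mathbb{P}^2$ branched in a smooth quartic, and the Geiser involution is central, giving your exact sequence; the elimination over the list of Theorem \ref{K_S9} is sound, since semi-invariance reduces to invariance for the perfect groups involved (so the icosahedral $A_5$ only offers the double conic in degree four, the Valentiner group nothing below degree six, and the intransitive groups only the quadruple line, the icosahedral Gr\"undformen starting in degree twelve); and the splitting of the central extension via the genuine three-dimensional representation $\rho\colon L_2(7)\to SL(3,\mathbb{C})$ with exactly invariant Klein quartic is the right mechanism to rule out $SL(2,7)$, with your injectivity check in $\mathbb{P}(2,1,1,1)$ handled correctly. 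The final Goursat-style analysis of subgroups of $\mathbb{Z}_2\times L_2(7)$ surjecting onto the simple factor is also correct. What each approach buys: the paper's citation is economical and defers the heavy lifting (including the verification that both groups really act minimally, i.e.\ with $\Pic(S)^G\simeq\mathbb{Z}$) to Dolgachev--Iskovskikh, whereas your argument makes the classification of $S$, of $\Aut(S)$, and of the possible $G$ transparent and self-contained; the one point you do not address, and which the citation silently covers, is minimality itself --- you never use $\Pic(S)^G\simeq\mathbb{Z}$, so you prove the (stronger) constraint statement for arbitrary nonsolvable $G\subset\Aut(S)$, but you do not verify that both listed groups actually occur as minimal pairs.
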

   \begin{proof}
   This follows directly from \cite[Theorem 6.17]{Dolgachev-Iskovskikh}.
  \end{proof}

    \section{Case of conic bundles}
   \label{ConBundle}
    In this section we study the surfaces $(S,G,\phi)$ in the class $\mathbb{CB}$, i.e. $G$-minimal surfaces $S$ with $\Pic(S)^G\simeq \mathbb{Z}^2$, having a $G$-equivariant conic bundle structure $\phi: S\rightarrow \mathbb{P}^1$. The groups $G$ are supposed to be finite nonsolvable.

     Recall (see \cite[Item 3.7]{Dolgachev-Iskovskikh}) that a rational $G$-surface $(S,G)$ has a conic bundle structure, if there exist a $G$-equivariant morphism $\phi: S \rightarrow \mathbb{P}^1$,   whose each fiber $F_t=\phi^{-1}(t),\ t\in \mathbb{P}^1$ is either a nondegenerate plane conic (isomorphic to $\mathbb{P}^1$) or a reducible reduced conic, i.e. a pair of intersecting lines.

     A conic bundle $(S,G,\phi)$ is said to be \emph{relatively $G$-minimal}, if the fibres of $\phi$ do not contain  $G$-orbits, consisting of nonintersecting rational $(-1)$-curves (i.e. components of   reducible fibres --- equivalently to  $\Pic(S)^G=\phi^*\Pic(\mathbb{P}^1)\oplus \mathbb{Z}\simeq \mathbb{Z}^2$). Recall that a $G$-surface $(S,G)$ is said to be \emph{$G$-minimal}, if any $G$-equivariant birational morphism $S\rightarrow Y$ onto a smooth $G$-surface $Y$ is a $G$-isomorphism.  It is clear that a $G$-minimal surface, having a conic bundle structure, is relatively minimal.   The inverse statement is not always valid.

Denote by $r$ the number of the reducible fibers of a conic bundle $(S,G,\phi)$.
By Noether formula we have $r=8-K_S^2$, so $K_S^2\le 8$. If $K_S^2=8$, then $S$ is  isomorphic to Hirzebruch's surface $\mathbb{F}_n$, $n\ge 0$.

          \begin{theorem}
        \label{th01}
      Let $(S,G,\phi) \in \mathbb{CB}$  with $K_S^2=8$, and $G$ be a finite nonsolvable group. Then the surface $S$ is isomorphic to Hirzebruch's surface $\mathbb{F}_n$, $n\ge 0$. The morphism $\phi: S\rightarrow \mathbb{P}^1$ coincides with the standard projection $\mathbb{F}_n \rightarrow \mathbb{P}^1$.
            \begin{enumerate}
           \item Let $n=0$. Then $\mathbb{F}_0\simeq \mathbb{P}^1\times \mathbb{P}^1$. The group $\Aut(\mathbb{F}_0)$ is isomorphic to $PGL(2,\mathbb{C}) \wr S_2$.
     The subgroup $G\subset PGL(2,\mathbb{C})\times PGL(2,\mathbb{C}) \subset  \Aut(\mathbb{F}_0)$ is isomorphic to one of the following: $A_5 \times B$,   $B \times A_5$,  $A_5 \triangle_{A_5} A_5$, where $B$ is any finite subgroup of $PGL(2, \mathbb{C})$.
          \item  Let $n>0$. Then $n>1$. Consider $\mathbb{F}_n \rightarrow \mathbb{P}(n,1,1)$  the blowdown of  exceptional section of $\mathbb{F}_n$. We have
$$\Aut(\mathbb{F}_n)\simeq \mathbb{C}^{n+1} \rtimes (GL(2,\mathbb{C}) \slash \mu_n),$$
where $GL(2,\mathbb{C}) \slash \mu_n$ acts on $\mathbb{C}^{n+1}$ by means of its natural linear representation in the space of binary forms with degree n.
 The subgroup $G\subset \Aut(\mathbb{F}_n)$ is isomorphic to one of the following groups:
\begin{equation}
            \label{dff}
     G \simeq
     \left \{
      \begin{aligned}
      &\mathbb{Z}_m \times A_5,  m \ge 1, & \textrm{if $n$ is even};\\
      &\mathbb{Z}_m \times \bar{A}_5,  m \ge 1, & \textrm{if $n$ is odd}.
      \end{aligned}
 \right.
    \end{equation}
        \end{enumerate}
        \end{theorem}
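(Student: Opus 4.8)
The plan is to follow the reductions visible in the statement: realize $S$ as a Hirzebruch surface, discard $n=1$, and then treat the two remaining cases via the structure of $\Aut(\mathbb{F}_n)$ together with Goursat's Lemma and Klein's classification of finite subgroups of $PGL(2,\mathbb{C})$. First I would fix the geometry. Since $K_S^2=8$ the number of reducible fibres is $r=8-K_S^2=0$, so $\phi$ is a $\mathbb{P}^1$-bundle over $\mathbb{P}^1$ and hence $S\simeq \mathbb{F}_n$ with $\phi$ the ruling, as already recorded before the statement. To exclude $n=1$ I would note that the exceptional section of $\mathbb{F}_1$ is the unique $(-1)$-curve, hence canonically $G$-invariant; contracting it gives a $G$-equivariant birational morphism $\mathbb{F}_1\to\mathbb{P}^2$ onto a smooth surface that is not an isomorphism, contradicting $G$-minimality. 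For $n>1$ the exceptional section has self-intersection $-n\le -2$ and the fibres are irreducible, so there is nothing $G$-equivariant to contract and $(\mathbb{F}_n,G,\phi)$ is automatically $G$-minimal with $\Pic(S)^G\simeq\mathbb{Z}^2$.

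For $n=0$ I would use $\Aut(\mathbb{P}^1\times\mathbb{P}^1)\simeq PGL(2,\mathbb{C})\wr \langle\tau\rangle$. The key observation distinguishing this case from Theorem \ref{K_S8} is that here $\phi$ is one of the two projections and must be preserved as a fibration, while $\tau$ interchanges the two rulings; hence no element of $G$ can involve $\tau$, so $G\subset PGL(2,\mathbb{C})\times PGL(2,\mathbb{C})$. I would then apply Goursat's Lemma exactly as in the proof of Theorem \ref{K_S8}, writing $G\simeq \pi_1(G)\triangle_D \pi_2(G)$. Nonsolvability forces some $\pi_i(G)\simeq A_5$ by Klein's list, and simplicity of $A_5$ gives $D\simeq 1$ or $A_5$. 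The case $D\simeq1$ yields $G=\pi_1(G)\times\pi_2(G)$ with one factor $A_5$, i.e. $A_5\times B$ or $B\times A_5$; the case $D\simeq A_5$ forces both projections to be $A_5$ (the only subgroup of $PGL(2,\mathbb{C})$ admitting $A_5$ as a quotient) and gives $A_5\triangle_{A_5}A_5$. These are the three listed groups.

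The substantive case is $n>1$. I would first recall the computation $\Aut(\mathbb{F}_n)\simeq \mathbb{C}^{n+1}\rtimes (GL(2,\mathbb{C})/\mu_n)$, with unipotent radical $\mathbb{C}^{n+1}\simeq H^0(\mathbb{P}^1,\mathcal{O}(n))$ and Levi factor $GL(2,\mathbb{C})/\mu_n$ acting on it through the degree-$n$ representation. Since $\mathbb{C}^{n+1}$ is torsion-free, $G\cap \mathbb{C}^{n+1}=1$, and because $H^1$ and $H^2$ of a finite group with coefficients in a $\mathbb{Q}$-vector space vanish, the finite subgroup $G$ is conjugate into the Levi factor; thus I may assume $G\subset GL(2,\mathbb{C})/\mu_n$. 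Projecting to $PGL(2,\mathbb{C})$ gives a nonsolvable image, necessarily $A_5$. Passing to the preimage $\hat G\subset GL(2,\mathbb{C})$ of $G$, which contains $\mu_n$ and surjects onto $A_5\subset PGL(2,\mathbb{C})$, I would observe that $[\hat G,\hat G]$ lies in $SL(2,\mathbb{C})$ and maps onto the perfect group $A_5$; since $\bar A_5$ admits no complement to its centre, this forces $[\hat G,\hat G]=\bar A_5$, whence $\hat G=\bar A_5\cdot C$ where $C=\hat G\cap\mathbb{C}^*$ is a finite cyclic group of scalars containing both $\mu_n$ and $-I$.

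The final step, and the main obstacle, is to read off the isomorphism type of $G=\hat G/\mu_n$, and this is exactly where the parity of $n$ enters. The central involution of $\bar A_5$ is $-I$, and $-I\in\mu_n$ precisely when $n$ is even. Thus for $n$ even the image of $\bar A_5$ in $GL(2,\mathbb{C})/\mu_n$ is $\bar A_5/\{\pm I\}\simeq A_5$, and together with the central cyclic quotient $C/\mu_n$ one obtains $G\simeq \mathbb{Z}_m\times A_5$; for $n$ odd the involution $-I$ survives, the image of $\bar A_5$ is $\bar A_5$ itself, and one obtains $G\simeq \mathbb{Z}_m\times \bar A_5$. The delicate bookkeeping here is the central-product computation identifying $(\bar A_5\cdot C)/\mu_n$ with the asserted direct product, controlling how the two central involutions are matched after dividing by $\mu_n$; I expect this verification to be the only genuinely technical point, the rest being the geometric reductions and the now-routine Goursat argument.
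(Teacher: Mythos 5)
Your reduction to $S\simeq\mathbb{F}_n$, the exclusion of $n=1$, and the case $n=0$ are correct and essentially the paper's own argument: the paper also reduces to $G\subset PGL(2,\mathbb{C})\times PGL(2,\mathbb{C})$ (it invokes $\Pic(S)^G\simeq\mathbb{Z}^2$ where you invoke preservation of the ruling, which amounts to the same thing) and then runs the same Goursat--Klein analysis. For $n>1$ your setup is sound and in one respect more careful than the paper's: the paper never conjugates $G$ into the Levi factor, using only injectivity of $G\to GL(2,\mathbb{C})/\mu_n$ (valid since $G\cap\mathbb{C}^{n+1}=1$); it then quotes \cite[Theorem 4.10]{Dolgachev-Iskovskikh} for the structure $GL(2,\mathbb{C})/\mu_n\simeq\mathbb{C}^*\rtimes SL(2,\mathbb{C})$ ($n$ odd), $\mathbb{C}^*\rtimes PGL(2,\mathbb{C})$ ($n$ even), and splits the resulting central extension by citing \cite[Lemma 4.4]{Dolgachev-Iskovskikh}. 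Your conjugation into the Levi via vanishing of $H^1(G,\mathbb{C}^{n+1})$, your identification $[\hat G,\hat G]=\bar{A}_5$, and the decomposition $\hat G=C\cdot\bar{A}_5$ with $C=\hat G\cap\mathbb{C}^*$ are all correct; and your conclusion for $n$ even is complete, since there $\pm I\in\mu_n$, so the images of $\bar{A}_5$ and of $C$ in $G=\hat G/\mu_n$ meet trivially and $G\simeq\mathbb{Z}_m\times A_5$.

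The genuine gap is exactly the step you deferred as ``delicate bookkeeping'', and for $n$ odd it cannot be completed as you assert. When $n$ is odd, $-I\notin\mu_n$, and $-I$ lies in both $\bar{A}_5$ and $C$; hence in $G=\hat G/\mu_n$ the images of $\bar{A}_5$ and of $C$ share the nontrivial class of $-I$, so $G$ is a central product of $\bar{A}_5$ and the cyclic group $C/\mu_n$ amalgamated over that involution, not a direct product. This is fatal in general: take $n=3$ and $C=\mu_{12}$, i.e.\ $\hat G=\mu_{12}\cdot\bar{A}_5\subset GL(2,\mathbb{C})$; then $G=\hat G/\mu_3\simeq(\mathbb{Z}_4\times\bar{A}_5)/\langle(-1,-I)\rangle$, a group of order $240$ whose center is $\mathbb{Z}_4$ and whose derived subgroup is $\bar{A}_5$, hence isomorphic neither to $\mathbb{Z}_2\times\bar{A}_5$ (center $\mathbb{Z}_2\times\mathbb{Z}_2$) nor to $\mathbb{Z}_4\times A_5$ (derived subgroup $A_5$). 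Moreover this is not a defect of your strategy alone: that $G$ is an honest finite nonsolvable subgroup of $GL(2,\mathbb{C})/\mu_3\subset\Aut(\mathbb{F}_3)$, it acts trivially on $\Pic(\mathbb{F}_3)$, and $\mathbb{F}_3$ carries no $(-1)$-curves, so $(\mathbb{F}_3,G,\phi)\in\mathbb{CB}$ and \eqref{dff} is contradicted as literally stated. The root of the discrepancy is the structure result the paper relies on and your computation quietly corrects: for $n$ odd one has $GL(2,\mathbb{C})/\mu_n\simeq GL(2,\mathbb{C})$ (via $A\mapsto\det(A)^{(n-1)/2}A$), in which $\mathbb{C}^*$ and $SL(2,\mathbb{C})$ intersect in $\{\pm I\}$ --- a central, not a direct, product --- so no Goursat-type splitting is available. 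Carried out honestly, your argument yields for odd $n$ the groups $(\mathbb{Z}_{2k}\times\bar{A}_5)/\langle(-1,-I)\rangle$, which are of the form $\mathbb{Z}_k\times\bar{A}_5$ only when $k$ is odd; so the final identification you postponed is not a technicality but the point where both your proof and the quoted classification \eqref{dff} break down.
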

               \begin{proof}
   If $S\simeq \mathbb{F}_0 \simeq \mathbb{P}^1\times \mathbb{P}^1$, then $G$ is a nonsolvable subgroup in $\Aut(\mathbb{F}_0)\simeq   PGL(2,\mathbb{C}) \wr S_2$. Note that $G \subset  PGL(2,\mathbb{C}) \times PGL(2,\mathbb{C})$, so as $\Pic(S)^G \simeq \mathbb{Z}^2$.
We apply  Goursat's lemma (see \cite[Lemma 4.1]{Dolgachev-Iskovskikh}) and Klein's classification of the finite subgroups in $PGL(2,\mathbb{C}$) (see \cite[Section 5.5]{Dolgachev-Iskovskikh}).  We get that $G  \simeq B \triangle_D C$, where one of groups $B$ and $C$ is isomorphic to $A_5$. Since the group $A_5$ is simple, the group $D$ is isomorphic to either $1$ or $A_5$. Therefore the group $G$ is isomorphic to one of the following groups:
 $A_5 \times B$,   $B \times A_5$,  $A_5 \triangle_{A_5} A_5$, where $B$ is any finite subgroup of $PGL(2, \mathbb{C})$.
Remark that a group $A_5 \triangle_{A_5} A_5$ is conjugate to image of a diagonal embedding of group $A_5$ to $PGL(2,\mathbb{C}) \times PGL(2,\mathbb{C})$.

   Consider the case $S \simeq \mathbb{F}_n,\ n > 0$. Let $\mathbb{F}_n \rightarrow \mathbb{P}(n,1,1)$ be the blowdown of the exceptional section of $\mathbb{F}_n$.

We note that if $n=1$ then $\mathbb{P}(1,1,1)$ is a smooth surface. Hence the triple $(S,G,\phi)$ is not minimal.
Therefore $n\ne 1$.

Introduce the coordinates $(x:t_0:t_1)$ on $\mathbb{P}(n,1,1)$. It's well known (see \cite[Theorem 4.10]{Dolgachev-Iskovskikh}) that $\Aut(\mathbb{F}_n)\simeq \mathbb{C}^{n+1} \rtimes (GL(2,\mathbb{C}) \slash \mu_n)$. The group $\mathbb{C}^{n+1}$ is generated by maps $(x:t_0:t_1) \mapsto (x+f_n(t_0,t_1):t_0:t_1)$, where $f_n$ is a binary form with degree $n$. The group $GL(2,\mathbb{C}) \slash \mu_n$ is generated by invertible maps $(x:t_0:t_1) \mapsto (x:at_0+bt_1:ct_0+dt_1)$. Moreover, we have
    \begin{equation*}
     GL(2,\mathbb{C}) \slash \mu_n \simeq
     \left \{
      \begin{aligned}
     & \mathbb{C}^* \rtimes SL(2,\mathbb{C}), &\text{if $n$ is odd};\\
     & \mathbb{C}^* \rtimes PGL(2,\mathbb{C}), &\text{if $n$ is even}.
      \end{aligned}
 \right.
    \end{equation*}
Consider the sequence of homomorphisms
$G \xrightarrow{h_1} GL(2,\mathbb{C}) \slash \mu_n \xrightarrow{h_2} PGL(2,\mathbb{C})$, where $h_2$ is
natural projection. Obviously that
 the homomorphism $h_1$ is injective, and $\Imm(h_2) \simeq A_5$ (see \cite[Section 5.5]{Dolgachev-Iskovskikh}).

   Thus the group $G$ is isomorphic to a central extension $\mathbb{Z}_m.A_5$, if $n$ is even,
   or $\mathbb{Z}_m.\bar{A}_5$, if $n$ is odd. Applying \cite[Lemma 4.4]{Dolgachev-Iskovskikh}, we get \eqref{dff}.
            \end{proof}

 There are no relatively $G$-minimal conic bundles $(S,G,\phi)$, if $K_S^2=7$. So there are no $G$-minimal conic bundles too.
 If $K_S^2=3$, $5$ or $6$, then there exists a $G$-equivariant  morphism $(S,G)\rightarrow (S',G)$, where $(S',G) \in \mathbb{D}$ (see \cite[Proposition 2.1, Theorem 4.1]{10} and, for example, \cite[Section 2]{Tsygankov-10}). Thus study of this cases reduces to study of $G$-minimal del Pezzo surfaces.
For other values $K_S^2=4,2,1,\ldots$ relatively $G$-minimal conic bundles are always $G$-minimal.

The morphism $\phi:S \rightarrow \mathbb{P}^1$ induces a homomorphism $\phi_*: G \rightarrow \Aut(\mathbb{P}^1)$. We have the following exact sequence
  \begin{equation}
    \label{1}
  1\rightarrow G_K\rightarrow G\rightarrow G_B\rightarrow 1,
\end{equation}
   where $G_K \simeq \Ker(\phi_*)$, and $G_B \simeq \Imm(\phi_*)$.

  Also consider the natural representation of group $G$ in the automorphisms group of lattice $\Pic(S)$. By $G_0$ we denote the kernel of this representation. The group $G_0$ fixes the divisor classes of the sections with negative self-intersection. Such sections obviously exist. Hence $G_0$ fixes it pointwisely. Considering one of these sections as a point on a general fibre, we conclude that $G_0$ is a cyclic group.

   \begin{theorem}[{\cite[Proposition 5.5]{Dolgachev-Iskovskikh}}]
 \label{th1.1.2}
 Let $(S,G,\phi) \in \mathbb{CB}$  with $K_S^2\le 4$, $K_S^2\ne3$. Suppose that $G_0\neq \{1\}$.
 Then  the surface  $S$ has an exceptional conic bundle structure (see below).
 \end{theorem}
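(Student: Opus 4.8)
The plan is to put a nontrivial element $g\in G_0$ to geometric use: since $g$ acts trivially on $\Pic(S)$, it must respect the whole combinatorial skeleton of the fibration, and I expect this to force $g$ to be a fibrewise automorphism whose fixed locus supplies the two disjoint negative sections characterizing an exceptional conic bundle. First I would show that $g$ fixes the base $\mathbb{P}^1$ pointwise. Each component $C$ of a reducible fibre is a $(-1)$-curve, and a $(-1)$-curve is the unique effective divisor in its class: if $[C'']=[C]$ with $C''$ effective, then $C''\cdot C=C^2=-1<0$ forces $C$ to be a component of $C''$, and comparing classes gives $C''=C$. As $g$ fixes every class in $\Pic(S)$, it carries each such component to itself and hence fixes the node $C\cap C'$ of every reducible fibre. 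By Noether's formula $r=8-K_S^2\ge 4$ under the hypothesis $K_S^2\le 4$, so there are at least four reducible fibres, lying over four distinct points of $\mathbb{P}^1$, all fixed by $\phi_*(g)\in\Aut(\mathbb{P}^1)\simeq PGL(2,\mathbb{C})$. Since a nontrivial M\"obius transformation fixes at most two points, $\phi_*(g)=\mathrm{id}$; thus $g\in G_K$ and $g$ preserves each fibre.

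Next I would analyse the fibrewise action together with the fact, established just before the theorem, that $G_0$ fixes pointwise a section $E$ of negative self-intersection. Since $g\ne 1$ it cannot act trivially on a general fibre (else it would be the identity on a dense set), so its restriction to a general fibre $F\cong\mathbb{P}^1$ is a nontrivial finite-order automorphism with exactly two fixed points. One of them is $E\cap F$; the other, as $t$ varies, sweeps out a second curve $E'\subseteq\operatorname{Fix}(g)$. Because $E$ is a section, $E'$ meets the general fibre in a single point and is therefore a section as well, so the horizontal part of $\operatorname{Fix}(g)$ is $R=E\cup E'$, a pair of sections each of which is pointwise fixed by all of $G_0$.

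The hard part will be the final step: showing that $E$ and $E'$ are disjoint and computing the self-intersections $E^2=E'^2$ so as to match the distinguished negative value required by Definition \ref{defExceptcon} of an exceptional conic bundle. Here I would combine adjunction on $S$ with a careful local analysis of how $E$ and $E'$ meet the reducible fibres $F_t=C+C'$ --- tracking which of the two non-nodal $g$-fixed points on $C$ and on $C'$ belongs to $R$, the fixed node keeping the two branches apart --- and use relative $G$-minimality to exclude degenerate configurations. This is also where the precise value of $K_S^2$ and the exclusion $K_S^2\ne 3$ enter, since they control the count $r=8-K_S^2$ of reducible fibres that feeds the self-intersection computation. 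I expect the reduction to a fibrewise action and the production of the two fixed sections (the first two paragraphs) to be essentially formal, whereas the bookkeeping over the reducible fibres and the verification that the resulting pair of disjoint negative sections realizes the exceptional conic bundle structure will be the main technical obstacle.
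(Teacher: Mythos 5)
First, a point of comparison: the paper contains no proof of Theorem \ref{th1.1.2} at all --- it is imported verbatim from \cite[Proposition 5.5]{Dolgachev-Iskovskikh} --- so your attempt can only be measured against that standard argument and on its own merits. Your first two paragraphs are correct and are indeed the standard opening: each component of a reducible fibre is the unique effective divisor in its class, so $g\in G_0$ preserves it; $r=8-K_S^2\ge 4>2$ fixed points on the base force $\phi_*(g)=\mathrm{id}$; and the horizontal part of $\operatorname{Fix}(g)$ consists of the section $E$ together with a second section $E'$. The problem is that your third paragraph is a declaration of intent, not an argument: the conclusion of the theorem --- that $S$ carries an exceptional conic bundle structure in the sense of Definition \ref{defExceptcon} --- is never reached, and the tools you name (adjunction, ``bookkeeping over the reducible fibres'') are not the ones that close the gap.

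Concretely, three things are missing. (a) Disjointness of $E$ and $E'$ is not the hard part: the fixed locus of a finite-order automorphism of a smooth complex surface is smooth, so distinct irreducible components of $\operatorname{Fix}(g)$ cannot meet; the same observation shows $E$ and $E'$ pass through \emph{opposite} components of every reducible fibre $C\cup C'$ (if both met $C$, then $g|_C$ would fix three points, forcing $C\subset\operatorname{Fix}(g)$ to intersect $E$ inside $\operatorname{Fix}(g)$, a contradiction). (b) The equality of self-intersections is group-theoretic, not numerical: contracting in each reducible fibre the component meeting $E'$ gives a Hirzebruch surface on which the two disjoint images are sections with self-intersections summing to zero, whence $E^2+E'^2=-r$; to split this evenly you need an element of $G$ interchanging $E$ and $E'$, which exists because $G_0\trianglelefteq G$ (so $G$ permutes $\{E,E'\}$) and relative $G$-minimality produces $h\in G$ swapping the two components of some reducible fibre, forcing $h(E)=E'$. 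This yields $E^2=E'^2=-(g+1)$ and $r=2g+2$, and is where minimality genuinely enters --- your proposal only gestures at it. (c) Even granted the configuration of two disjoint $(-g-1)$-sections, you must still identify $S$ with the minimal resolution of $Y_g\colon F_{2g+2}(t_0,t_1)+t_2t_3=0$: for instance, observe that $S$ is the blow-up of $\mathbb{F}_{g+1}$ at $2g+2$ distinct points of a section disjoint from the negative section, that $\Aut(\mathbb{F}_{g+1})$ acts transitively on such sections, and that $\widetilde{Y_g}$ is exactly such a blow-up with the points given by the zeros of $F_{2g+2}$. None of (a)--(c) appears in your write-up, so as it stands the proposal establishes only the existence of the two fixed sections, not the theorem.
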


   \begin{definition}
   \label{defExceptcon}
   Define the \emph{exceptional conic bundles}. This is the minimal resolution of singularities of  surface, given by the equation in  weighted projective space $\mathbb{P}(1,1,g+1,g+1)$, where $g\geq 1$:
 \begin{equation*}
 Y_g:F_{2g+2}(t_0,t_1)+t_2t_3=0,
 \end{equation*}
   \noindent where $F_{2g+2}$ is a binary form without multiple factors with degree $2g+2$.
\end{definition}

   After  the resolution of indeterminacy points,  the projection onto $\mathbb{P}^1$ with coordinates $(t_0,t_1)$ will induce a conic bundle structure $\phi:\widetilde{Y_g}\rightarrow \mathbb{P}^1$. This conic bundle has reducible fibres over the points from $\mathbb{P}^1$, where $F_{2g+2}(t_0,t_1)=0$. The surface  $\widetilde{Y_g}$ contains $2$   nonintersecting rational $(-g-1)$-curves defined  by the equations $t_2=0$ and $t_3=0$. Automorphisms of the surface $\widetilde{Y_g}$ are induced by automorphisms of $\mathbb{P}(1,1,g+1,g+1)$.

The case of exceptional conic bundles will be considered in Section \ref{Exceptional conic}.

 There is a theorem about the structure of minimal finite groups, acting on the non-exceptional conic bundles.

\begin{theorem}[{\cite[Theorem 5.7]{Dolgachev-Iskovskikh}}]
\label{th1.4}
Let $(S,G,\phi)\in \mathbb{CB}$  with $K_S^2\le 4$, $K_S^2\ne3$, and  $\Sigma$ be the set of reducible fibres of $\phi$. Suppose that $G_0\simeq 1$. Then one of the following cases occurs.
 \begin{enumerate}
 \item Case $G_K\simeq \mathbb{Z}_2$. The central involution  $\iota$,  generating the group $G_K$,
  fixes pointwise a smooth bisection  $C$ of the fibration $\phi$ and switches the components of fibres in a subset $\Sigma'\subset \Sigma$. The morphism  $\phi$ defines the linear system $g_2^1$ on the curve $C$ having  branch points in the singular points of the fibres in the set $\Sigma'$.
   Genus of the curve $C$ is equal to $g=(m-2)/2$, where $m=|\Sigma'|$. The group $G_B$ is isomorphic to a subgroup of the automorphism group of  curve $C$ modulo the involution defined by $g_2^1$.
 \item Case $G_K\simeq \mathbb{Z}_2^2$. Each nontrivial element $\iota_i,\ i=0,1,2$ of the group $G_K$ fixes pointwise
   an irreducible smooth bisection $C_i$. The set $\Sigma$ is partitioned into three subsets $\Sigma_0,\ \Sigma_1,\ \Sigma_2$, such that $\Sigma_i=(\Sigma_j \cup \Sigma_k)\setminus (\Sigma_j \cap \Sigma_k),\ i\neq j\neq k \ne i$.
The morphisms $\phi|_{C_i},\ i=0,1,2$ are branched over the singular points of  fibres in subsets $\Sigma_i$.
The group $G_B$ leaves invariant the set of points $\phi(\Sigma) \in \mathbb{P}^1$ and its partition into three subsets $\phi(\Sigma_i),\ i=0,1,2$.
 \end{enumerate}
 \end{theorem}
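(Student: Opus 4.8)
The plan is to analyze the structure imposed by the two basic facts we have in hand: the group $G_K = \Ker(\phi_*)$ acts faithfully on a general fibre $F \simeq \mathbb{P}^1$ fixing the (finitely many) special points, and since $G_0 \simeq 1$, every nontrivial element of $G_K$ acts nontrivially on $\Pic(S)$, hence cannot fix all the negative sections. Restricting $G_K$ to a general fibre embeds it into $\Aut(\mathbb{P}^1) \simeq PGL(2,\mathbb{C})$; but $G_K$ must also preserve the finite set of reducible fibres and the bisection structure, which severely constrains it. The first step is to pin down $G_K$ as an abstract group. Because $(S,G,\phi)$ is relatively minimal, no element of $G_K$ may switch the two components of a reducible fibre in a way that produces a $G$-orbit of disjoint $(-1)$-curves; combined with the faithful fibrewise action, I expect to show $G_K$ is an elementary abelian $2$-group acting on the general fibre so that each involution has two fixed points tracing out a smooth bisection. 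The standard fact that a finite subgroup of $PGL(2,\mathbb{C})$ in which every nontrivial element is an involution is isomorphic to $\mathbb{Z}_2$ or $\mathbb{Z}_2^2$ then gives exactly the two cases of the theorem.

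**For the case $G_K \simeq \mathbb{Z}_2$,** let $\iota$ generate $G_K$. Acting on a general fibre as an involution of $\mathbb{P}^1$, it has exactly two fixed points, and as the fibre varies these sweep out a curve $C$ which is the fixed locus of $\iota$; I would verify $C$ is a smooth bisection by a local analysis at the reducible fibres, checking that $\iota$ either fixes a node or interchanges the two components. The subset $\Sigma' \subset \Sigma$ where $\iota$ switches components is precisely where $C$ is ramified over $\mathbb{P}^1$, so $\phi|_C : C \to \mathbb{P}^1$ is a double cover branched over the $m = |\Sigma'|$ points $\phi(\Sigma')$; the Riemann–Hurwitz formula then yields $2g(C) - 2 = 2(-2) + m$, i.e. $g = (m-2)/2$. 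Finally, the quotient $G/G_K \simeq G_B$ acts on $C$ compatibly with $\phi$, commuting with the sheet-swapping involution cut out by the $g^1_2$, which realizes $G_B$ as a subgroup of $\Aut(C)$ modulo that involution.

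**For the case $G_K \simeq \mathbb{Z}_2^2$,** I would apply the same fixed-locus analysis to each of the three involutions $\iota_0, \iota_1, \iota_2$, producing three smooth bisections $C_0, C_1, C_2$ and three branch sets $\Sigma_0, \Sigma_1, \Sigma_2 \subset \Sigma$. The key combinatorial point is the relation $\Sigma_i = (\Sigma_j \cup \Sigma_k) \setminus (\Sigma_j \cap \Sigma_k)$ for $\{i,j,k\} = \{0,1,2\}$: over a given reducible fibre, the three commuting involutions act on the node and the two components, and since $\iota_i = \iota_j \iota_k$, the component-switching behaviour of $\iota_i$ is the ``sum'' (symmetric difference) of those of $\iota_j$ and $\iota_k$. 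This is just the observation that in $\mathbb{Z}_2^2$ the product of two involutions switches components exactly when exactly one of the factors does. The $G_B$-action permutes the reducible fibres and, by conjugation in $G$, permutes the three involutions and hence the partition, giving the stated invariance.

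**The main obstacle** I anticipate is the local analysis at the reducible fibres: one must show rigorously that the fixed locus of each involution is a \emph{smooth} bisection (rather than acquiring singularities or becoming a section plus stray components), and simultaneously classify how each involution acts on the two lines and the node of a degenerate fibre. This requires a careful étale-local or eigenspace computation of the $G_K$-action near each node, using that $S$ is smooth and that relative minimality forbids the disjoint-component configurations; once this local picture is fixed, the global statements about bisections, branch loci, genus, and the $G_B$-action follow formally.
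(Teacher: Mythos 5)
The paper itself offers no proof of this statement: Theorem \ref{th1.4} is quoted verbatim from \cite[Theorem 5.7]{Dolgachev-Iskovskikh}, so your proposal can only be judged against the argument given there. Judged that way, your outline captures the routine parts --- the dichotomy $\mathbb{Z}_2$ versus $\mathbb{Z}_2^2$ once $G_K$ is known to be a nontrivial elementary abelian $2$-group, the Riemann--Hurwitz count $g=(m-2)/2$, and the relation $\Sigma_i=(\Sigma_j\cup\Sigma_k)\setminus(\Sigma_j\cap\Sigma_k)$ coming from $\iota_i=\iota_j\iota_k$ --- but it leaves the load-bearing step unproved, and the mechanism you invoke for it would not deliver it. You never prove that every nontrivial $g\in G_K$ is an involution; you only announce that you ``expect to show'' it from relative minimality plus faithfulness of the fibrewise action. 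That cannot work: those two facts are perfectly compatible with $G_K$ containing an element of any order whose fixed points on the general fibre sweep out a bisection. The actual argument has two ingredients. (a) Since $G_0\simeq 1$, a nontrivial $g\in G_K$ must switch the components of some reducible fibre: if it switched none, it would fix the fibre class $f$ and every component class, hence send a section class $s$ to $s+\beta f$, and finite order forces $\beta=0$, i.e. $g\in G_0$. (b) At the node $q$ of a switched fibre choose local coordinates with $\phi=xy$; because $g$ acts trivially on the base, $g^{*}(xy)=xy$ exactly, so $dg_q$ is anti-diagonal with the product of its two entries equal to $1$, whence $(dg_q)^2=I$; since a finite-order automorphism of a smooth surface fixing a point with identity differential is the identity, $g^2=1$. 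Your ``main obstacle'' paragraph asks the local analysis at the nodes only to certify smoothness of the fixed bisection; it never extracts this order-two statement, which is the heart of the theorem.

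A second genuine gap: you never show $G_K\ne 1$, although this is implicitly part of the conclusion (one of the two listed cases must occur), and it is exactly here that the hypothesis $K_S^2\le 4$ --- which you never use --- enters, through $r=8-K_S^2\ge 4>0$. The condition $\Pic(S)^G\simeq\mathbb{Z}^2$ forces, for every reducible fibre, some $\sigma\in G$ stabilizing that fibre and switching its components; the same node computation, now with $\phi\circ\sigma=\bar\sigma\circ\phi$ and $d\bar\sigma$ equal to a primitive $m$-th root of unity $\mu$ at the fixed point, gives $(d\sigma_q)^2=\mu I$, hence $\ord(\sigma)=2m$ and $\sigma^{m}$ is a nontrivial involution lying in $G_K$. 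Without this, your dichotomy is not exhaustive. Three smaller gaps of the same kind: you must exclude that the fixed curve of $\iota$ is a pair of disjoint sections rather than an irreducible bisection (again via $G_0\simeq 1$, since such an $\iota$ switches no components); in case (2) you establish only the symmetric-difference relation, not that the $\Sigma_i$ actually cover $\Sigma$ (which needs the stabilizer argument just described); and the injectivity of $G_B\to \Aut(C)/\langle\beta\rangle$, where $\beta$ is the involution defined by the $g_2^1$, needs the observation that an element of $G$ acting on $C$ as the identity or as $\beta$ induces the identity on $\phi(C)=\mathbb{P}^1$ and hence lies in $G_K$.
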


Consider cases of Theorem \ref{th1.4} separately. We note that the subgroup $G_B \subset \Aut(\mathbb{P}^1)\simeq PGL(2,\mathbb{C})$ (see \eqref{1}) is nonsolvable, since $G_K$ is solvable by Theorem \ref{th1.4}. Hence $G_B \simeq A_5$ (see Klein's classification of the finite subgroups in $PGL(2,\mathbb{C})$ in \cite[Section 5.5]{Dolgachev-Iskovskikh}). We will use the fact in sections \ref{conbund1}, \ref{conbund2}, \ref{conbund3} without mentioning.
\begin{enumerate}
\item Case $G_K \simeq \mathbb{Z}_2$, and $\Sigma'=\Sigma$. This case will be investigated in Section \ref{conbund1}.
\item Case $G_K \simeq \mathbb{Z}_2$, and $\Sigma' \ne \Sigma$. This case will be investigated in Section \ref{conbund2}.
\item Case $G_K \simeq \mathbb{Z}_2^2$. This case will be investigated in Section \ref{conbund3}.
\end{enumerate}

  We will often use the following facts about finite nonsolvable subgroups $\bar{P}\subset SL(2,\mathbb{C})$ (see \cite[Section 5.5]{Dolgachev-Iskovskikh}). Obviously that $\bar{P} \simeq \bar{A}_5$. Any group of this type is conjugated to a group with the following generators:
\begin{equation}
         \label{generat}
g_1 = \begin{pmatrix}\varepsilon_{10}&0\\
0&\varepsilon_{10}^{-1}\end{pmatrix}, \quad g_2 = \begin{pmatrix}0&i\\
i&0\end{pmatrix}, \quad g_3 = \frac{1}{\sqrt{5}}\begin{pmatrix}\varepsilon_5-\varepsilon_5^4&\varepsilon_5^2-\varepsilon_5^3\\
\varepsilon_5^2-\varepsilon_5^3&-\varepsilon_5+\varepsilon_5^4\end{pmatrix}.
\end{equation}
          \begin{notation}
\label{stA5}
   We will denote a group generated by \eqref{generat} as $St(\bar{A}_5)$. It's image in $PGL(2,\mathbb{C})$ we will denote as $St(A_5)$.
     \end{notation}
Consider the natural representation of $St(\bar{A}_5)$ in space of polynomials $\mathbb{C}[t_0,t_1]$. Space of relative invariants of the representation is generated by the following Gr\"undformens:
   \begin{equation}
   \label{grundform}
         \begin{aligned}
  &\Phi_1 = T_0^{30}+T_1^{30}+522(T_0^{25}T_1^5-T_0^{5}T_1^{25})-10005(T_0^{20}T_1^{10}+T_0^{10}T_1^{20}),\\
   &\Phi_2 = -(T_0^{20}+T_1^{20})+228(T_0^{15}T_1^5-T_0^{5}T_1^{15})-494T_0^{10}T_1^{10},\\
   &\Phi_3 = T_0T_1(T_0^{10}+11T_0^5T_1^5-T_1^{10}).
       \end{aligned}
   \end{equation}
Since $\bar{A}_5/(\pm 1) \cong A_5$ is a simple group and all Gr\"undformens have even degree, we easily see that $g(\Phi_i)=\Phi_i$, $i=1,2,3$, for any $g\in St(\bar{A}_5)$. In other words, the characters are trivial.
\begin{notation}
\label{mathcalI}
We will denote space of invariants of group $\bar{A}_5$ generated by this Gr\"undformens as $\mathcal{I}^{St(\bar{A}_5)}$.
\end{notation}

        \subsection{Case of exceptional conic bundles}
           \label{Exceptional conic}
   In this section we will prove the following theorem.
\begin{theorem}
   \label{thExcept}
Let $(S,G,\phi)\in \mathbb{CB}$ be an exceptional conic bundle, and $G$ be a finite nonsolvable group. Then the surface $S$ can be represented as the minimal resolution of singularities of surface given by  the equation in the weighted projective space $\mathbb{P}(1,1,g+1,g+1)$, $g\geq 1$ with  coordinates $(t_0:t_1:t_2:t_3)$:
$$
 Y_g:F_{2g+2}(t_0,t_1)+t_2t_3=0,
 $$
   \noindent where $F_{2g+2}\in \mathcal{I}^{St(\bar{A}_5)}$ is a binary form without multiple factors with degree $2g+2$. The morphism $\phi: S\rightarrow \mathbb{P}^1$ is induced by the map $\phi': Y_g \dashrightarrow \mathbb{P}^1$ given by
   $$\phi':\ (t_0:t_1:t_2:t_3)\mapsto (t_0:t_1). $$
The group $G$ is isomorphic to
   \begin{equation*}
    G\simeq \left \{
   \begin{aligned}
      & D_n \times \bar{A}_5,\ n\ge 1, \ \text{if $g$ is even};\\
      & D_n \times A_5,\ n\ge 1, \ \text{if $g$ is odd}.
   \end{aligned}
               \right.
   \end{equation*}
All possibilities for $G$ occur.
  The group $G$ is generated by the maps:
  \begin{equation}
 \label{3.6}
           \begin{aligned}
    & (t_0:t_1:t_2:t_3) \mapsto (\varepsilon_{10}t_0:\varepsilon_{10}^{-1}t_1:t_2:t_3),\\
  & (t_0:t_1:t_2:t_3) \mapsto (i t_1:i t_0:t_2:t_3),\\
  & (t_0:t_1:t_2:t_3) \mapsto ((\varepsilon_{5}-\varepsilon_{5}^4)t_0+(\varepsilon_{5}^2-\varepsilon_{5}^3)t_1:
 (\varepsilon_{5}^2-\varepsilon_{5}^3)t_0+(-\varepsilon_{5}+\varepsilon_{5}^4)t_1:t_2:t_3),\\
  & (t_0:t_1:t_2:t_3) \mapsto (t_0:t_1:\varepsilon_m t_2: \varepsilon_m^{-1} t_3),\\
  & (t_0:t_1:t_2:t_3) \mapsto (t_0:t_1:t_3:t_2),
   \end{aligned}
\end{equation}
 where $m=n$, if $g$ is odd, and $m=2n$, otherwise.
\end{theorem}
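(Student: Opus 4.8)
The plan is to take the exceptional structure of $S$ as already given by Definition \ref{defExceptcon} and to extract the remaining content --- the invariance of $F_{2g+2}$, the precise isomorphism type of $G$, and the generators \eqref{3.6} --- from the action of $G$ on the weighted homogeneous coordinates. Since $(S,G,\phi)$ is an exceptional conic bundle, $S=\widetilde{Y_g}$ with $Y_g\colon F_{2g+2}(t_0,t_1)+t_2t_3=0$ in $\mathbb{P}(1,1,g+1,g+1)$, the morphism $\phi$ is induced by $(t_0:t_1:t_2:t_3)\mapsto(t_0:t_1)$, and every automorphism of $S$ is induced by a graded automorphism of $\mathbb{P}(1,1,g+1,g+1)$; write $C_1\colon t_2=0$ and $C_2\colon t_3=0$ for the two disjoint $(-g-1)$-sections. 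First I would analyse the homomorphism $\phi_*\colon G\to PGL(2,\mathbb{C})$ of \eqref{1}. Its kernel $G_K$ fixes the base, so it acts faithfully on a general fibre $\mathbb{P}^1$ preserving the two-point set $\{C_1\cap F,\ C_2\cap F\}$; hence $G_K$ embeds into the stabiliser of a two-point set in $PGL(2,\mathbb{C})$ and is in particular solvable, and nonsolvability of $G$ forces $G_B=\Imm(\phi_*)\simeq A_5$ by Klein's classification.

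Next I would pin down $F_{2g+2}$. After conjugating so that $G_B$ is in Klein normal form, I lift it to the binary icosahedral group $St(\bar{A}_5)\subset SL(2,\mathbb{C})$ of \eqref{generat} acting on $(t_0,t_1)$. A graded automorphism preserving $Y_g$ acts on $(t_0,t_1)$ by a matrix $M$ and on $(t_2,t_3)$ by a matrix $N$; matching coefficients in $F(M\cdot)+N\cdot(t_2t_3)=\eta\,(F+t_2t_3)$ shows that $N$ must be diagonal or antidiagonal, so that $t_2t_3$ is preserved up to the scalar $\eta$, and that $F(M\cdot)=\eta F$, i.e. $F_{2g+2}$ is a relative invariant of $St(\bar{A}_5)$. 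Because $\bar{A}_5$ is perfect it has no nontrivial characters --- equivalently, the triviality of the characters of the Gr\"undformens \eqref{grundform} noted after that display --- so $\eta$ is trivial on the lift and $F_{2g+2}\in\mathcal{I}^{St(\bar{A}_5)}$. The absence of multiple factors is exactly the condition that the fibres of $\phi$ over the zeros of $F$ be reduced, i.e. that $Y_g$ carry only the singularities resolved in $\widetilde{Y_g}$.

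Then I would determine $G_K$ and assemble $G$. The vertical automorphisms preserving $t_2t_3$ and fixing $(t_0,t_1)$ are the scalings $(t_2,t_3)\mapsto(\alpha t_2,\alpha^{-1}t_3)$ together with the swap $(t_2,t_3)\mapsto(t_3,t_2)$; a finite such group has cyclic rotation part $\mathbb{Z}_m$, the fourth map of \eqref{3.6}, whose rotation lies in the kernel $G_0$ of the action on $\Pic(S)$, consistently with Theorem \ref{th1.1.2}. Moreover $G$-minimality, i.e. $\Pic(S)^G\simeq\mathbb{Z}^2$, forces $G$ to interchange $C_1$ and $C_2$, so the swap, the fifth map of \eqref{3.6}, lies in $G$ and $G_K\simeq D_m$. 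The lift $St(\bar{A}_5)$ of the first three maps of \eqref{3.6} splits $\phi_*$ and commutes with $G_K$ because the two act on disjoint coordinate blocks. The one subtle identification is that in $\mathbb{P}(1,1,g+1,g+1)$ the element $-I\in St(\bar{A}_5)$, acting on $(t_0,t_1)$, induces the same automorphism of $S$ as the scaling of $(t_2,t_3)$ by $(-1)^{g+1}$. If $g$ is odd this is the identity, the lift descends to $A_5$, $A_5\cap G_K=1$, and $G\simeq A_5\times D_n$ with $m=n$; if $g$ is even it is the central scaling by $-1$, the lift is faithfully $\bar{A}_5$ with $Z(\bar{A}_5)$ glued to the order-two rotation $s^{m/2}$, and with $m=2n$ the resulting central product is identified with $\bar{A}_5\times D_n$.

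Finally, for existence I would exhibit, for each admissible $g$ of each parity and each $n\ge1$, a squarefree $F_{2g+2}\in\mathcal{I}^{St(\bar{A}_5)}$ built from the Gr\"undformens \eqref{grundform}, whose distinct zero sets make generic combinations squarefree, and then check directly that the five maps \eqref{3.6} preserve $Y_g$ and generate a group of the asserted type, so that every $D_n\times A_5$ and every $D_n\times\bar{A}_5$ is realised. I expect the main obstacle to be precisely the group-theoretic assembly of the previous paragraph: separating the genuinely commuting $A_5$ (or $\bar{A}_5$) and $D_m$ pieces inside $\Aut(S)$ requires keeping careful track of the single $\mathbb{C}^*$-scaling defining $\mathbb{P}(1,1,g+1,g+1)$, and the parity computation $(-1)^{g+1}$, which governs whether $Z(\bar{A}_5)$ is absorbed into the vertical cyclic group, is what produces the dichotomy $A_5$ versus $\bar{A}_5$ and fixes the relation $m=n$ versus $m=2n$.
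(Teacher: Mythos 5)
Your proposal follows essentially the same route as the paper's proof: show $G_B\simeq A_5$ (solvability of the vertical part plus Klein's classification), conjugate so that $F_{2g+2}$ becomes an honest invariant of $St(\bar{A}_5)$ using the perfectness of $\bar{A}_5$ (the paper gets the structure $\Aut(Y_g)\simeq N.P$ and the semi-invariance of $F_{2g+2}$ by citing \cite[Proposition 5.3]{Dolgachev-Iskovskikh}, where you rederive it by matching coefficients, but the content is the same), characterize $G$-minimality by the swapping of the two $(-g-1)$-sections so that $G_K=G\cap N$ is dihedral, and finally use the relation in $\mathbb{P}(1,1,g+1,g+1)$ between $-I$ acting on $(t_0,t_1)$ and the scaling of $(t_2,t_3)$ by $(-1)^{g+1}$ to produce the parity dichotomy. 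Your write-up is in fact more explicit than the paper's, which compresses the assembly and the existence statement into ``further arguments are obvious''; one small point you assert without justification is that $G$ contains the standard lift of $St(\bar{A}_5)$ (rather than some twisted copy): this needs the Goursat-type remark that $G_K$ is solvable while $\bar{A}_5$ has no nontrivial solvable quotient.

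There is, however, a genuine problem at exactly the step you flag as the main obstacle, and it is a problem you share with the paper. For $g$ even you claim that the central product of $D_{2n}$ and $\bar{A}_5$, glued along $Z(\bar{A}_5)=\{\pm I\}$ identified with the central rotation $\rho^{n}$ of $D_{2n}$, ``is identified with $\bar{A}_5\times D_n$.'' This holds precisely when $n$ is odd: then $D_{2n}\simeq\mathbb{Z}_2\times D_n$ with the $\mathbb{Z}_2$ factor generated by $\rho^n$, and the glueing absorbs that factor into $\bar{A}_5$. When $n$ is even no such splitting exists ($\rho^n$ is a square, hence lies in every index-two subgroup of $D_{2n}$), and indeed the two groups are not isomorphic: the central product $(D_{2n}\times\bar{A}_5)/\langle(\rho^{n},-I)\rangle$ has center of order $2$, while $D_n\times\bar{A}_5$ has center of order at least $4$ for even $n$. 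The paper's own proof has the identical defect, since its intermediate claim $\Aut(Y_g)\simeq(N/\mu_2)\times\bar{A}_5$ for even $g$ fails by the same center computation; so your argument is faithful to the paper, but an honest treatment of the $g$-even case must either prove the identification only for odd $n$ or record $G$ as the central product of $D_{2n}$ and $\bar{A}_5$ over $\mathbb{Z}_2$.
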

    \begin{proof}
   By \cite[Proposition 5.3]{Dolgachev-Iskovskikh} we have $\Aut{Y_g}\simeq N.P$, where $N \simeq \mathbb{C}^* \rtimes \mathbb{Z}_2 $ is  a group generated by the maps:
           \begin{equation}
        \label{3.7}
   \begin{aligned}
& (t_0:t_1:t_2:t_3)\mapsto (t_0:t_1:t_3:t_2),\\
& (t_0:t_1:t_2:t_3)\mapsto (t_0:t_1:ct_2: c^{-1}t_3),\ c\in \mathbb{C},\ c\ne 0.
    \end{aligned}
   \end{equation}
 And $P$ is the subgroup of $PGL(2,\mathbb{C})$, leaving the form $F_{2g+2}(t_0,t_1)$ semi-invariant. Obviously that $P\simeq A_5$. We conjugate the subgroup $P \subset PGL(2,\mathbb{C})$ to the subgroup $St(A_5)$.  Then $F_{2g+2} \in \mathcal{I}^{St(\bar{A}_5)}$. Whence we get that the group $\Aut(Y_g)$ is generated by maps \eqref{3.6} and \eqref{3.7}. Hence
  $$
\Aut(Y_g)\simeq \left \{
    \begin{aligned}
     (N\slash \mu_2) \times \bar{A}_5, \text{if $g$ is even};\\
     N \times A_5, \text{if $g$ is odd},
\end{aligned}
  \right.
   $$
   where the group $\mu_2$ acts by $(t_0:t_1:t_2:t_3)\mapsto (t_0:t_1:-t_2:-t_3)$.
   It follows from the description of exceptional conic bundles (see \cite[Section 5.2]{Dolgachev-Iskovskikh}) that the triple $(S,G,\phi)$ is minimal, iff the group $G$ permutes points: $(0:0:1:0)$ and $(0:0:0:1)$. Therefore $G\cap N \simeq D_n$. Further arguments are obvious.
   \end{proof}

   \subsection{Case, when $G_0\simeq 1$, $G_K \simeq \mathbb{Z}_2$, and $\Sigma'=\Sigma$.}
          \label{conbund1}
       Here we will apply arguments of \cite[Section 3.1]{Tsygankov-10}.

       The group $G_K$ is generated by involution $\iota$. By \cite[Theorem 3.2]{Tsygankov-10} we get $S/ \iota  \simeq \mathbb{F}_e$. The morphism $\pi: S \rightarrow S/ \iota \simeq \mathbb{F}_e$ is $G$-equivariant, and a faithful action of the group $G_B$ is defined on $\mathbb{F}_e$(see exact sequence \eqref{1}).
Recall (see Theorem \ref{th1.4}) that the morphism $\pi: S \rightarrow \mathbb{F}_e$ is branched over a nonsingular hyperelliptic curve $C$. Let $\bar{C}=\pi(C)$.
We consider cases $e=0$ and $e>0$ in Theorems \ref{th1} and \ref{th2} respectively.

  We make some preparations before statement of Theorem \ref{th1}. Introduce the coordinates $(x_0:x_1,t_0:t_1)$ on $\mathbb{F}_0\simeq \mathbb{P}^1 \times \mathbb{P}^1$. The morphism $\phi: S \rightarrow \mathbb{P}^1$ induces projection $(x_0:x_1,t_0:t_1) \mapsto (t_0:t_1)$. The curve $\bar{C}$ is represented by the equation:
     \begin{equation}
\label{eqC}
  \Equat(\bar{C})= p_0(t_0,t_1)x_0^2+2p_1(t_0,t_1)x_0x_1+p_2(t_0,t_1)x_1^2=0,
   \end{equation}
  where $p_i, \ i=0,1,2$ are binary forms with degree $2d$. Note that the degree is even, so as the divisor class $\bar{C}\in 2 \Pic(\mathbb{F}_0)$ (where $2 \Pic(\mathbb{F}_0)  \subset \Pic(\mathbb{F}_0) \simeq \mathbb{Z}^2$ is the even sublattice). The form $\Disc(\bar{C})=p_0p_2-p_1^2$ has no multiple factors, since $\bar{C}$ is nonsingular.  We will apply the Segre embedding $\nu: \mathbb{P}^1\times \mathbb{P}^1 \rightarrow \mathbb{P}^3$ to represent the surface $S$ by equations. Introduce the coordinates $(x:y:z:w)$ on $\mathbb{P}^3$. This embedding is given by $\nu: (x_0:x_1, t_0:t_1)\mapsto (x_0t_0:x_0t_1:x_1t_0:x_1t_1)$.
 We choose some polynomials $F_i(x,y,z,w)$, $i=0,\ldots,2d-2$, such that $x_0^i x_1^{2d-2-i}\Equat(\bar{C})=\nu^*(F_i)$. The surface $S$ is represented by the equations in $\mathbb{P}(d^{d},1^4)$ with the coordinates $u_i,\ x,\ y,\ z,\ w,\ i=0,\ldots,d-1$:
    \begin{equation}
  \label{sur1}
       \begin{aligned}
   & u_i u_j=F_{i+j},\ 0 \le i \le j \le d-1,\\
     & x^{j-i}u_i=u_j z^{j-i},\ y^{j-i}u_i=u_j w^{j-i},\ 0 \le i< j \le d-1,\\
      & xw=yz.
 \end{aligned}
   \end{equation}

    \begin{theorem}
  \label{th1}
  Let $(S,G,\phi)\in \mathbb{CB}$ , and $G$ be a finite nonsolvable group. Suppose that $G_0\simeq 1$, $G_K \simeq \langle \iota \rangle \simeq \mathbb{Z}_2$, $\Sigma'=\Sigma$, and $S/\iota \simeq \mathbb{F}_0$. Then the surface $S$ is represented by equations \eqref{sur1}.
The morphism $\phi: S \rightarrow \mathbb{P}^1$ is given by
$$\phi:\ (u_0:\ldots:u_{d-1}:x:y:z:w) \mapsto \left \{
   \begin{aligned}
   &(x:y), \text{if}\ (x:y)\ne (0:0);\\
   &(z:w), \text{if}\ (z:w)\ne (0:0).
    \end{aligned}
    \right.              $$

 There is defined a faithful action of $G_B$ (see exact sequence \eqref{1}) on $\mathbb{F}_0$, and $G_B \subset PGL(2,\mathbb{C})\times PGL(2,\mathbb{C})$.  One of the following cases occurs.
  \begin{enumerate}
     \item The subgroup $G_B \subset PGL(2,\mathbb{C}) \times PGL(2, \mathbb{C})$ is conjugated to the subgroup $1\times St(A_5)$, and $G \simeq \mathbb{Z}_2 \times A_5$.
     \item The subgroup $G_B \subset PGL(2,\mathbb{C}) \times PGL(2, \mathbb{C})$ is conjugated to the diagonal embedding $St(A_5)\hookrightarrow PGL(2,\mathbb{C}) \times PGL(2, \mathbb{C})$, and
     \begin{equation*}
      G \simeq \left \{
     \begin{aligned}
             &\bar{A}_5, \text{if $d$ is even in \eqref{eqC}},\\
              &  \mathbb{Z}_2 \times A_5, \text{otherwise}.
                                    \end{aligned}
                 \right.
     \end{equation*}
     \end{enumerate}
   All cases exist. And all possibilities for $G$ occur.

  In all cases  $G$ acts on $S$ by the following way. Embedding $G_B \simeq St(A_5)$ to $PGL(2,\mathbb{C}) \times PGL(2, \mathbb{C})$ defines a unique embedding $St(\bar{A}_5) \hookrightarrow SL(2,\mathbb{C}) \times SL(2, \mathbb{C})$. This defines an action of $St(\bar{A}_5)$ on the surface $S$ given by  equation \eqref{sur1}. The action of $St(\bar{A}_5)$ on coordinates $u_i,\ i=0,\ldots,d-1$ coincides with the action on monomials $x_0^i x_1^{d-1-i},\ i=0, \ldots, d-1$, respectively.
An action of $G$ is generated by the action of $St(\bar{A}_5)$ and by the map
 $$(u_0:\ldots:u_{d-1}:x:y:z:w) \rightarrow (-u_0:\ldots:-u_{d-1}:x:y:z:w).$$
 \end{theorem}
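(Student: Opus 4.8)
The plan is to combine the double-cover description prepared above with a Goursat analysis of $G_B$ and a central-extension computation, the difficulty concentrating in deciding whether the extension $1\to\langle\iota\rangle\to G\to G_B\to1$ splits. First I would record that the surface and the morphism are already fixed by the construction preceding the statement: since $S/\iota\simeq\mathbb{F}_0$ and $\pi\colon S\to\mathbb{F}_0$ is the double cover branched along the nonsingular curve $\bar C$ of \eqref{eqC}, the Segre embedding together with the square-root coordinates $u_i$ yields exactly \eqref{sur1}, where $u_i$ plays the role of $s\cdot x_0^ix_1^{d-1-i}$ with $s^2=\Equat(\bar C)$, the morphism $\phi$ descends the projection onto $(t_0:t_1)$, and the deck involution $\iota$ is the map $u_i\mapsto-u_i$ fixing $x,y,z,w$. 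Here I cite \cite[Theorem 3.2]{Tsygankov-10} and the set-up above.

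Next I would determine $G_B$. Because $\phi$ is $G$-equivariant and descends to the ruling $\mathbb{F}_0\to\mathbb{P}^1$, the faithful action of $G_B$ preserves this $\mathbb{P}^1$-bundle, so $G_B\subset PGL(2,\mathbb{C})\times PGL(2,\mathbb{C})$. Applying Goursat's lemma to the two projections $\pi_1,\pi_2$, the base projection $\pi_2$ is injective, whence $\pi_2(G_B)\simeq A_5$, which I conjugate to $St(A_5)$; simplicity of $A_5$ then forces $\Ker(\pi_1|_{G_B})\in\{A_5,1\}$, giving either $G_B=1\times St(A_5)$ (case (1)) or $G_B$ the diagonal copy of $St(A_5)$ (case (2)). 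The option $St(A_5)\times1$ is excluded, since $G_B$ must act faithfully on the base.

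The heart of the argument is the extension. Since $\iota$ is central (Theorem \ref{th1.4}) and $G$ is the full preimage of $G_B$ in $\Aut(S)$, it is a central extension of $A_5$ by $\mathbb{Z}_2$; as the Schur multiplier of $A_5$ is $\mathbb{Z}_2$, either $G\simeq\mathbb{Z}_2\times A_5$ or $G\simeq\bar A_5$, and I must decide which. I would lift $G_B$ to $St(\bar A_5)\subset SL(2,\mathbb{C})\times SL(2,\mathbb{C})$; because $\bar C$ is $G$-invariant and $St(\bar A_5)$ is perfect, $\Equat(\bar C)$ carries the trivial character, so $s$ is genuinely invariant and $St(\bar A_5)$ acts on the $u_i$ exactly through the monomials $x_0^ix_1^{d-1-i}$. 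The decision then rests on the image of the central element $-\mathrm{id}$: in case (2) it fixes $x,y,z,w$ and sends $u_i\mapsto(-1)^{d-1}u_i$, which in $\mathbb{P}(d^{d},1^{4})$ is trivial when $d$ is odd (so $G\simeq\mathbb{Z}_2\times A_5$) but equals $\iota$ when $d$ is even (so $\iota$ lies in the lifted copy and $G\simeq\bar A_5$); in case (1) the first factor acts trivially, and the same computation makes $-\mathrm{id}$ projectively trivial precisely when $d$ is even.

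The point that makes case (1) uniform, which I expect to be the main obstacle, is that there $d$ is forced even. Invariance of $\bar C$ under $1\times St(A_5)$ forces each coefficient $p_j$ in \eqref{eqC} to lie in $\mathcal{I}^{St(\bar A_5)}$; writing an invariant in the monomial basis $\Phi_3^a\Phi_2^b\Phi_1^c$ with $c\in\{0,1\}$, its degree is $\equiv 30c \pmod{4}$, forcing $c=1$ whenever the degree is $\equiv 2 \pmod{4}$, i.e.\ whenever $2d$ has $d$ odd. Thus for odd $d$ all of $p_0,p_1,p_2$ are divisible by the degree-$30$ Gr\"undform $\Phi_1$, so $\Disc(\bar C)=p_0p_2-p_1^2$ is divisible by $\Phi_1^2$ and $\bar C$ is singular, which is impossible. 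Hence $d$ is even in case (1) and $G\simeq\mathbb{Z}_2\times A_5$ unconditionally, whereas in case (2) both parities occur for suitable diagonal invariants with squarefree discriminant, realizing both $\bar A_5$ and $\mathbb{Z}_2\times A_5$. Exhibiting such forms built from the Gr\"undforms \eqref{grundform} and their diagonal analogues shows all cases and all groups occur, and the displayed generators simply record the $St(\bar A_5)$-action together with $\iota$.
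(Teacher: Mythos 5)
Your route is essentially the paper's: the double-cover/Segre setup giving \eqref{sur1}, Goursat's lemma plus injectivity of the base projection to reduce $G_B$ to $1\times St(A_5)$ or a diagonal copy of $St(A_5)$, the unique lift to $St(\bar{A}_5)\subset SL(2,\mathbb{C})\times SL(2,\mathbb{C})$ (perfectness of $\bar{A}_5$ making $\Equat(\bar{C})$ honestly invariant), and the identification of the extension type from the action of the central element on $\mathbb{P}(d^{d},1^{4})$. Your central-element computation is correct and matches the stated parities, and you in fact make explicit two points that the paper's proof dismisses with ``the remaining arguments are obvious'': the $d$-parity dichotomy in case (2), and the fact that $d$ is forced to be even in case (1) --- your observation that every element of $\mathcal{I}^{St(\bar{A}_5)}$ of degree $\equiv 2 \pmod 4$ is divisible by $\Phi_1$, so that odd $d$ would give $\Phi_1^2\mid \Disc(\bar{C})$ and a singular branch curve, is sound and is genuinely needed for the unconditional conclusion $G\simeq\mathbb{Z}_2\times A_5$ in that case.

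The genuine gap is the existence claim, i.e.\ ``All cases exist. And all possibilities for $G$ occur.'' For case (1) your remark suffices: take $p_0,p_1,p_2\in\mathcal{I}^{St(\bar{A}_5)}$ without multiple or common factors. But for case (2) you only assert that ``both parities occur for suitable diagonal invariants with squarefree discriminant'' and that one finishes by ``exhibiting such forms built from the Gr\"undforms and their diagonal analogues.'' This is not a construction: under the \emph{diagonal} action the coefficients $p_i$ in \eqref{eqC} are not separately invariant, so products of Gr\"undforms in $(t_0,t_1)$ do not produce $\Equat(\bar{C})$, and nonsingularity is a real issue because the linear system of diagonal-invariant curves of bidegree $(2,2d)$ has base points. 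Supplying this is exactly the role of the paper's Lemma \ref{lem1}: it decomposes $R_2\otimes R_{2d}\simeq R_{2d+2}\oplus R_{2d}\oplus R_{2d-2}$ as an $SL(2,\mathbb{C})$-module, shows the base locus lies in the union of the diagonal and the fibres over the zeros of the Gr\"undforms, and then, separately for $d=30k+15$ and $d=30k$, chooses members whose $R_{2d+2}$-component is a product of pairwise coprime forms (e.g.\ $\Phi_2\Phi_3 h$, resp.\ $\Phi_1\Phi_2\Phi_3 h$), so that Bertini together with an explicit check at the base points yields smoothness. Without an argument of this kind you have not exhibited any surface realizing case (2) --- in particular none realizing $G\simeq\bar{A}_5$ --- so the proposal as written does not establish the full statement.
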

      \begin{proof}
 Recall that $G_B \simeq A_5$. The subgroup $G_B \subset \Aut(\mathbb{F}_0)$ acts trivially on $\Pic(\mathbb{F}_0)$. Hence $G_B \subset PGL(2,\mathbb{C}) \times PGL(2,\mathbb{C})$.  We apply Goursat's Lemma (see \cite[Lemma 4.1]{Dolgachev-Iskovskikh}) to study the subgroups $A_5 \subset PGL(2,\mathbb{C}) \times PGL(2,\mathbb{C})$. Consider projections $\pi_i: PGL(2,\mathbb{C}) \times PGL(2,\mathbb{C}) \rightarrow PGL(2,\mathbb{C})$, $i=1,2$ on the first and the second factor respectively. We get $G_B \simeq \pi_1(G_B) \triangle_D \pi_2(G_B)$, where $D \simeq \Imm(\pi_1|G_B)/\Ker(\pi_2|G_B)$. The group $A_5$ is simple. Therefore $D \simeq 1$ or $A_5$. These cases corresponds respectively to  cases $1$ and $2$ of the theorem.

It's need to check existence of the nonsingular curve $\bar{C} \subset \mathbb{F}_0$ for each of these cases. In the first case this curve obviously exists. Because we can choose binary forms $p_i \in \mathcal{I}^{St(\bar{A}_5)},\ i=0,1,2$ in \eqref{eqC} without multiple and common factors (see the generators of $\mathcal{I}^{St(\bar{A}_5)}$ in \eqref{grundform}). It remains to verify existence of the nonsingular curve $\bar{C}$ in the second case. Also we need to show that the parameter $d$ in \eqref{eqC} can be odd and even. This follows from the next lemma.

   \begin{lemma}
  \label{lem1}
  There exist  nonsingular curves $\bar{C} \in \mathbb{F}_0$ with odd and even parameter $d$ given by equation \eqref{eqC} and invariant under the diagonal action of group $St(A_5)$ on $\mathbb{F}_0 \simeq \mathbb{P}^1\times \mathbb{P}^1$.
    \end{lemma}
   \begin{proof}
     Consider the linear space of polynomials $\mathbb{C}[x,y]$. The space has the natural structure of $SL(2,\mathbb{C})$-module. Denote by $R_n \subset \mathbb{C}[x,y]$ the subspace of polynomials with degree $n$. We have $\Equat(\bar{C})\in R_2 \otimes R_{2d}$. It's known (see \cite[Exercise 11.11]{Fulton}) that $ R_2 \otimes R_{2d} \simeq R_{2d+2} \oplus R_{2d} \oplus R_{2d-2}$ as $SL(2,\mathbb{C})$-module.

  Consider a linear system $\mathcal{J}$ of $St(\bar{A}_5)$-invariant curves with bidegree $(2,2d)$ in $\mathbb{F}_0$.
   Obviously, we have $\mathcal{J} \simeq (R_2 \otimes R_{2d})^{St(\bar{A}_5)} \simeq R_{2d+2}^{St(\bar{A}_5)} \oplus R_{2d}^{St(\bar{A}_5)} \oplus R_{2d-2}^{St(\bar{A}_5)}$.

First, we prove existence of a nonsingular curve $\bar{C}$ with odd parameter $d$.
We take $d=30k +15, k \in \mathbb{N}, \ k\ge 2$. It's easy to check (see \eqref{grundform}) that each set $R_{2d+2}^{St(\bar{A}_5)}, R_{2d}^{St(\bar{A}_5)}$ and $R_{2d-2}^{St(\bar{A}_5)}$ is not empty. To apply Bertini theorem, we need to study the base points of  system $\mathcal{J}$.

   We have $(x_0t_1-x_1t_0)^2 R_{2d-2}(t_0,t_1)^{St(\bar{A}_5)} \in \mathcal{J}$. It's easy to check that $R_{2d-2}^{St(\bar{A}_5)}=\Phi_3^4 R_{60k-20}^{St(\bar{A}_5)}$, and $\Phi_2^2 R_{60(k-1)}^{St(\bar{A}_5)} \subset R_{60k-20}^{St(\bar{A}_5)}$ (see \eqref{grundform}). The space $R_{60(k-1)}^{St(\bar{A}_5)}$ has not a common factor, since $k \ge 2$.
Hence the base points of $\mathcal{J}$ lie in the union of sets: $x_0t_1-x_1t_0=0$, $\Phi_2(t_0,t_1)=0$ and $\Phi_3(t_0,t_1)=0$.

    Consider the projection $\xi: R_2 \otimes R_{2d} \simeq R_{2d+2} \oplus R_{2d} \oplus R_{2d-2} \rightarrow R_{2d+2}$. It is given by the polynomial $p_0t_0^2+2p_1t_0t_1+p_2t_1^2$. This polynomial defines intersection of the curve $\bar{C}$ and of diagonal $x_0t_1-x_1t_0=0$. We have $\Phi_2 \Phi_3 R_{60k}^{St(\bar{A}_5)} \subset R_{2d+2}^{St(\bar{A}_5)}$. The space $R_{60k}^{St(\bar{A}_5)}$ has not a common factor, since $k \ge 2$. Hence we can take a polynomial $\Equat(\bar{C}) \in R_2 \otimes R_{2d}$, such that $\xi(\Equat(\bar{C})) = \Phi_2 \Phi_3 h(t_0,t_1)$, where the forms $\Phi_2$, $\Phi_3$, $h(t_0,t_1)$ have not pairwise common factors.
Therefore the base points of $\mathcal{J}$ lie in the union of sets: $\Phi_2(t_0,t_1)=0$ and $\Phi_3(t_0,t_1)=0$.
However by choose of $\Equat(\bar{C})$ we get that the curve $\bar{C}$ is nonsingular at these sets.

It remains to prove existence of a nonsingular curve $\bar{C}$ with even parameter $d$. We take $d=30k, k \in \mathbb{N}, \ k\ge 3$. It is easy to check (see \eqref{grundform}) that each set $R_{2d+2}^{St(\bar{A}_5)}, R_{2d}^{St(\bar{A}_5)}$ and $R_{2d-2}^{St(\bar{A}_5)}$ is not empty. To apply Bertini theorem, we need to study the base points of  system $\mathcal{J}$.

We have $\Phi_1 \mathcal{J}'\subset \mathcal{J}$, where $\mathcal{J}'$ is a linear system of $St(\bar{A}_5)$-invariant curves with bidegree $(2,2d-30)$ in $\mathbb{F}_0$. By previous arguments, we know that the base points of $\mathcal{J}'$ lie in the union of sets: $\Phi_2(t_0,t_1)=0$ and $\Phi_3(t_0,t_1)=0$. Hence the base points of $\mathcal{J}$ lie in the union of sets: $\Phi_1(t_0,t_1)=0$, $\Phi_2(t_0,t_1)=0$ and $\Phi_3(t_0,t_1)=0$.

    Again consider projection $\xi$. We have $\Phi_1 \Phi_2 \Phi_3 R_{60(k-1)}^{St(\bar{A}_5)} \subset R_{2d+2}^{St(\bar{A}_5)}$. The space $R_{60(k-1)}^{St(\bar{A}_5)}$ has not a common factor, since $k \ge 3$. Hence we can take a polynomial $\Equat(\bar{C}) \in R_2 \otimes R_{2d}$, such that $\xi(\Equat(\bar{C})) = \Phi_1 \Phi_2 \Phi_3 h(t_0,t_1)$, where the forms $\Phi_1$, $\Phi_2$, $\Phi_3$, $h(t_0,t_1)$ have not pairwise common factors. Again the conditions of Bertini theorem are satisfied.
    \end{proof}

  It remains to describe the action of group $G$. The embedding of the group $G_B \simeq St(A_5)$ to $PGL(2,\mathbb{C}) \times PGL(2, \mathbb{C})$ defines a unique embedding $St(\bar{A}_5) \hookrightarrow SL(2,\mathbb{C}) \times SL(2, \mathbb{C})$. We note that $\Equat(\bar{C})$ is invariant under the action of $St(\bar{A}_5)$. Hence there is defined an action of $St(\bar{A}_5)$ on the surface $S$ given by equation \eqref{sur1}. The action of $St(\bar{A}_5)$ on coordinates $u_i,\ i=0,\ldots,d-1$ coincides with the action on monomials $x_0^i x_1^{d-1-i},\ i=0, \ldots, d-1$, respectively. The remaining arguments are obvious.
            \end{proof}

     The case $e > 0$ will be considered in the next theorem.
          \begin{theorem}
                                     \label{th2}
  Let $(S,G,\phi) \in \mathbb{CB}$ , and $G$ be a finite nonsolvable group. Suppose that $G_0\simeq 1$, $G_K \simeq \langle \iota \rangle \simeq \mathbb{Z}_2$, $\Sigma'=\Sigma$, and $S/\iota \simeq \mathbb{F}_n,\ n>0$.
Then there is a $G$-invariant curve $E$, which is the preimage of exceptional section $\mathbb{F}_e$. Consider the contraction of this curve $(S,G,\phi) \rightarrow (S',G,\phi')$, where a map $\phi': S' \dashrightarrow \mathbb{P}^1$ is defined by $\phi$.The surface $S'$ is given by the following equation in $\mathbb{P}(d+e,e,1,1)$ with the coordinates $(u:x:t_0:t_1)$:
    \begin{equation}
    \label{surf1}
             u^2+p_0(t_0,t_1)x^2+2p_1(t_0,t_1)x+p_2(t_0,t_1)=0,
         \end{equation}
   where $p_i, \ i=0,1,2$ are binary forms with degree $2d$, $2d+e$, $2d+2e$, respectively.
  The binary form $p_0p_2-p_1^2$ has no multiple factors.
    The map $\phi'$ is given by
  $$\phi':\ (u:x:t_0:t_1)\mapsto (t_0:t_1).    $$
Moreover, $e$ is even.
    The group $G$ is generated  by the maps:
   \begin{equation*}
     \begin{aligned}
       &u \mapsto -u,\\
          & (u:x:t_0:t_1) \mapsto (u:x+F_e(t_0,t_1):at_0+bt_1:ct_0+d't_1),
     \end{aligned}
   \end{equation*}
 where
             \begin{equation}
      \label{matr1}
                    \begin{pmatrix}
                           a & b\\
             c & d'
                 \end{pmatrix}
                      \in St(\bar{A}_5),
       \end{equation}
  and $F_e(t_0,t_1)$ is a some binary form with degree $e$, unique for each matrix \eqref{matr1}.
  The group $G$ is isomorphic to
     \begin{equation}
                 \label{opG}
                       G \simeq \left \{
                                 \begin{aligned}
                  &\bar{A}_5, \text{if $d$ is odd},\\
                         & \mathbb{Z}_2 \times A_5, \text{if $d$ is even}.
                                 \end{aligned}
                       \right.
    \end{equation}
 All possibilities for $G$ occur.
                                  \end{theorem}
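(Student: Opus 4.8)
The plan is to present $S$ as the minimal resolution of a $G$-equivariant double cover $S'$ of $\mathbb{P}(e,1,1)$, to read the equation \eqref{surf1} off the branch curve, and then to extract the two parity statements (evenness of $e$, and the dichotomy for $G$) from the faithful $A_5$-action on the base.

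First I would set up the geometry. Since $e>0$, the exceptional section $C_0\subset\mathbb{F}_e$ is the unique irreducible curve of negative self-intersection, so it is fixed by every automorphism of $\mathbb{F}_e$, in particular by the faithful $G_B$-action supplied by Theorem \ref{th1.4} and the sequence \eqref{1}. Hence its preimage $E=\pi^{-1}(C_0)$ is $G$-invariant, and contracting $E$ yields a $G$-equivariant morphism $(S,G,\phi)\to(S',G,\phi')$, where $S'$ lies over the blow-down $\mathbb{F}_e\to\mathbb{P}(e,1,1)$. In coordinates $(x:t_0:t_1)$ of weights $(e,1,1)$ the branch bisection $\bar C=\pi(C)$ is cut out by $p_0x^2+2p_1x+p_2$; equality of the weights of the three monomials forces $\deg p_0=2d$, $\deg p_1=2d+e$, $\deg p_2=2d+2e$ (common weight $2d+2e=2\,\mathrm{wt}(u)$), and taking the square root gives $S'\colon u^2+p_0x^2+2p_1x+p_2=0$ with $u$ of weight $d+e$, which is \eqref{surf1}. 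Smoothness of $\bar C$ (Theorem \ref{th1.4}) is equivalent to $\Disc(\bar C)=p_0p_2-p_1^2$ having no multiple factors, and $\phi'$ is the projection $(u:x:t_0:t_1)\mapsto(t_0:t_1)$.

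Next, $e$ is even. By the remark after Theorem \ref{th1.4} one has $G_B\simeq A_5$, and by hypothesis $G_B$ acts faithfully on $\mathbb{F}_e$. Theorem \ref{th01}(2) says that every finite nonsolvable subgroup of $\Aut(\mathbb{F}_e)$ is $\mathbb{Z}_m\times A_5$ when $e$ is even and $\mathbb{Z}_m\times\bar{A}_5$ when $e$ is odd. Since $\bar{A}_5$ (the binary icosahedral group) has a single involution whereas $A_5$ has fifteen, $A_5$ embeds in neither $\bar{A}_5$ nor $\mathbb{Z}_m\times\bar{A}_5$; thus a faithful $A_5$-action on $\mathbb{F}_e$ forces $e$ even. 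Now the structure of $G$: the involution $\iota$ is the Galois involution of $\pi$, hence central, so \eqref{1} is a central extension $1\to\langle\iota\rangle\to G\to A_5\to1$, and the only two such are $\mathbb{Z}_2\times A_5$ and $\bar{A}_5$. The nonsolvable part is the lift of $St(A_5)\subset PGL(2,\mathbb{C})$ to $St(\bar{A}_5)\subset SL(2,\mathbb{C})$ acting through the matrices \eqref{matr1}; its central element $-I$ induces the identity on the base $\mathbb{P}^1$, so $\phi_*$ sends its image to $1$ and therefore this image lies in $G_K=\langle\iota\rangle$. To decide which of $\{1,\iota\}$ it is, apply the corresponding generator $(u:x:t_0:t_1)\mapsto(u:x+F:-t_0:-t_1)$ and renormalise the base by the weighted $\mathbb{C}^*$-action with $\mu=-1$: the point becomes $((-1)^{d+e}u:(-1)^e(x+F):t_0:t_1)=((-1)^du:x+F:t_0:t_1)$, since $e$ is even. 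As this must equal $\mathrm{id}$ or $\iota$ (both of which fix $x$), the translation $F$ attached to $-I$ vanishes and $-I$ acts by $u\mapsto(-1)^du$. Hence $-I$ acts trivially when $d$ is even, giving $G\simeq\mathbb{Z}_2\times A_5$, and acts as $\iota$ when $d$ is odd, giving $G\simeq\bar{A}_5$; this is \eqref{opG}, and the two displayed maps are the resulting generators.

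Finally, for \emph{all possibilities occur} I would, after conjugating the $A_5$-action into the Levi factor of $\Aut(\mathbb{F}_e)$ (possible since $H^1(A_5,\mathbb{C}^{e+1})=0$), take all $F=0$, so that $p_0,p_1,p_2$ become honest $St(\bar{A}_5)$-invariants of the even degrees $2d,2d+e,2d+2e$, and then run a Bertini argument exactly as in Lemma \ref{lem1}, controlling the base locus of the invariant linear system via the Gr\"undformens \eqref{grundform}, to produce smooth $\bar C$ (equivalently $p_0p_2-p_1^2$ square-free) for any even $e>0$ and both parities of $d$. The main obstacle is the normalisation step in the previous paragraph: correctly tracking the weighted-projective $\mathbb{C}^*$-action that restores the identity on the base, and showing the translation of $-I$ must vanish, since it is precisely the residual sign $(-1)^d$ that separates $\bar{A}_5$ from $\mathbb{Z}_2\times A_5$.
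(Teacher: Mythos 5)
Your proposal is correct and follows essentially the same route as the paper: contract the preimage of the exceptional section, realize $S'$ as a double cover of $\mathbb{P}(e,1,1)$ branched along $\bar C$ (giving \eqref{surf1} with squarefree $p_0p_2-p_1^2$), rule out odd $e$ because a faithful $A_5$ cannot sit inside the $\bar{A}_5$-type automorphism groups of $\mathbb{F}_e$ with $e$ odd, and obtain \eqref{opG} by deciding whether $-I\in St(\bar{A}_5)$ acts as the identity or as $\iota$; your explicit computation that the translation attached to $-I$ must vanish and that $-I$ then acts by $u\mapsto(-1)^du$ (after the weighted rescaling with $\lambda=-1$) is precisely the step the paper compresses into ``Obviously, we get \eqref{opG}''. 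The one soft spot is the final existence paragraph: the Bertini argument is not quite ``exactly as in Lemma \ref{lem1}'', since here the invariant linear system lives on $\mathbb{P}(e,1,1)$, its base locus contains the cone point, and for most choices of degrees the spaces of $St(\bar{A}_5)$-invariants of degrees $2d$, $2d+e$, $2d+2e$ have forced common Gr\"undform factors which make $p_0p_2-p_1^2$ automatically non-squarefree (e.g.\ $e=2$ with small $d$); so either the degrees must be chosen in suitable congruence classes, or --- as the paper does --- one simply exhibits two explicit curves, $\Phi_3x^2+\Phi_2=0$ in $\mathbb{P}(4,1,1)$ for $d$ even and $\Phi_1x^2+2hx+h'=0$ in $\mathbb{P}(30,1,1)$ for $d$ odd, which is all that ``all possibilities for $G$ occur'' requires.
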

         \begin{proof}
                  We will use the following construction to represent the surface $S$ by equations. Consider the morphism $\mathbb{F}_e \rightarrow \mathbb{P}(e,1,1)$, which is the blowing down of exceptional section $\mathbb{F}_e$. Introduce the coordinates $(x:t_0:t_1)$ on $\mathbb{P}(e,1,1)$. The morphism $\phi$ induces projection $(x:t_0:t_1)\mapsto (t_0:t_1)$. The curve $\bar{C}$ will be represented by the following equation in $\mathbb{P}(e,1,1)$:
       $$
          p_0(t_0,t_1)x^2+2p_1(t_0,t_1)x+p_2(t_0,t_1)=0.
              $$
   The form $\Disc(\bar{C})=p_0p_2-p_1^2$ has no multiple factors, since $\bar{C}$ is nonsingular.
                 We construct a double cover of $\mathbb{P}(e,1,1)$, branched along $\bar{C}$. We get the surface $S'$ given by the equations \eqref{surf1}.

          Note that $\degg(p_0)$ is even, since $\bar{C} \in 2 \Pic(\mathbb{F}_e)$. Denote the degree as $2d$.
                     The automorphism group of $\mathbb{P}(e,1,1)$ consists of the maps
     $$
                   (x:t_0:t_1) \mapsto (a'x+P_e(t_0,t_1):b't_0+c't_1:d't_0+v't_1).
           $$
        where $P_e$ is a binary form with degree $e$. We can choose coefficients $b',\ c',\ d',\ v'$, so that
                $$
        \begin{pmatrix}
                      b' & c'\\
              d'  & v'
          \end{pmatrix}
   \in SL(2,\mathbb{C}).
$$
     We have $G_B\simeq A_5 \not \subset \bar{A}_5$ (see \eqref{1}). Therefore $e$ is even. We conjugate $G_B$ to a group consisting of the following maps
     $$
     (x:t_0:t_1) \mapsto (v x+F_e(t_0,t_1): at_0+bt_1:ct_0+d't_1),
        $$
   where the coefficients $a$, $b$, $c$, $d'$ and the binary form $F_e$ satisfy conditions of the theorem. But $v=1$, since $A_5$ is a  simple group, and all it's characters $A_5 \rightarrow \mathbb{C}^*$ are trivial. Obviously, we get \eqref{opG}.

Finally, we need to prove that all possibilities for $G$ in  \eqref{opG} occur. It's sufficient to construct nonsingular curves $\bar{C}$   invariant under an action of $G_B$ with odd and even parameter $d$. We assume that $G_B$ is a group consisting of maps
$$
     (x:t_0:t_1) \mapsto (x: at_0+bt_1:ct_0+d't_1),
        $$
 with condition \eqref{matr1}.  Let $d$ is even. Then the curve $\bar{C}$ is represented by the following equation in $\mathbb{P}(4,1,1)$:
  $$\Phi_3(t_0,t_1) x^2+ \Phi_2(t_0,t_1) =0,$$
 where $\Phi_i$, $i=2$, $3$ are binary forms  in \eqref{grundform}.

Let's construct the curve $\bar{C}$ with odd $d$. Consider the equation of $\bar{C}$ in $\mathbb{P}(30,1,1)$:
 $$\Phi_1(t_0,t_1) x^2 +2 h(t_0,t_1)x + h'(t_0,t_1)=0,$$
 where $\Phi_1$ is a binary form in \eqref{grundform}, and $h$, $h'$ are some binary forms in $\mathcal{I}^{St(\bar{A}_5)}$. It's easy to check by counting of parameters that $h$ and $h'$ can be chosen, such that $\Disc(\bar{C})=\Phi_1 h'- h^2$ has no multiple factors. Then $\bar{C}$ is nonsingular.

           \end{proof}

       \subsection{Case, when $G_0\simeq 1$, $G_K \simeq \mathbb{Z}_2$, and $\Sigma' \ne \Sigma$.}
    \label{conbund2}
            Here we will apply arguments of \cite[Section 3.2]{Tsygankov-10}. Let $r= |\Sigma|$, and $m=|\Sigma'|$.

 Let $g_1: \widetilde{S} \rightarrow S$ be blowing up of the singular points of reducible fibres
$\Sigma \setminus \Sigma'$, and $g_2: \widetilde{S} \rightarrow S'$ be the contraction of  proper transform of $\Sigma \setminus \Sigma'$. The surface $S'$ has $2(r-m)$ singular points of type $A_1$. Obviously that maps  $g_1$ and $g_2$ are $G$-equivariant. We have the $G$-equivariant commutative diagram.
 \begin{equation}
 \label{diagramma}
 \xymatrix{
 & \widetilde{S} \ar@{->}[dl]_{g_1} \ar@{->}[dd]_{\widetilde{h}} \ar@{->}[dr]^{g_2}\\
 S \ar@{->}[dd]_h && S' \ar@{->}[dd]_{h'}\\
 & \widetilde{S}/\iota \ar@{->}[dl]_{g_1'} \ar@{->}[dr]^{g_2'}\\
 S/ \iota && S'/ \iota \\
 }
 \end{equation}

 In the diagram the vertical arrows correspond to the quotient map by the involution $ \iota $, and  maps $ g_1'$ and $ g_2' $ are induced by  maps $ g_1 $ and $ g_2 $. The triple $(S,G,\phi)$ defines a triple $(S',G,\phi')$, where the morphism $\phi': S' \rightarrow \mathbb{P}^1$ is induced by the morphism $\phi$.

   By \cite[Lemma 3.4]{Tsygankov-10} we get that surfaces $\widetilde{S}/\iota$ and $S'/ \iota$ are nonsingular. Moreover, $S'/\iota \simeq \mathbb{F}_e$.

  The morphism $h': S' \rightarrow \mathbb{F}_e$ is a double cover branched over the union of curves $C' \cup g_{2*}(g_1^*(\Sigma \setminus \Sigma'))$, where the curve $C'$ is the proper transform of  curve $C$. The image of the curve $g_{2*}(g_1^*(\Sigma \setminus \Sigma'))$ on the ruled surface $\mathbb{F}_e$ is the union of $r-m$ fibres. Denote these fibres as $S_i,\ i=1,\ldots , r-m$. Also let $\hat{C}=h'(C')$.

   For each fiber $S_i,\ i=1,\ldots , r-m$ denote by $x_{i1}$ and $x_{i2}$ two distinct points of the intersection $S_i \cap \hat{C}$. Obviously, there is defined a faithful action of $G_B$ (see exact sequence \eqref{1}) on $\mathbb{F}_e$. By \cite[Lemma 3.5]{Tsygankov-10} the triple $(S,G,\phi)$ is minimal, iff points $x_{i1}$ and $x_{i2}$ lie in the same orbit under the action of $G_B$ for each $i=1,\ldots , r-m$.

           We consider cases $e=0$ and $e>0$ in Theorems \ref{th3} and \ref{th4} respectively.

    We make some preparations before statement of Theorem \ref{th3}. Introduce the coordinates $(x_0:x_1,t_0:t_1)$ on $\mathbb{F}_0\simeq \mathbb{P}^1 \times \mathbb{P}^1$. The morphism $\phi': S' \rightarrow \mathbb{P}^1$ induces projection $\sigma: (x_0:x_1,t_0:t_1) \mapsto (t_0:t_1)$.
  The fibres $S_i \subset \mathbb{F}_0$, $i=1,\ldots , r-m$ are represented by the equations:
     $$
   S_i: a_i t_0 + b_i t_1=0.
    $$
  Consider the form $Q_{r-m}= \prod_{i=1}^{r-m} (a_i t_0 + b_i t_1) $. We conjugate the subgroup $G_B \simeq A_5 \subset PGL(2,\mathbb{C})$ to the group $St(A_5)$.  Let $\Stab_{St(A_5)}(\sigma(S_i))$, $i=1,\ldots,r-m$ be a stabilizer of  point $\sigma(S_i) \subset \mathbb{P}^1$ in the group $St(A_5)$. Considering equations \eqref{grundform}, we get that $\left| \Stab_{St(A_5)}(\sigma(S_i)) \right.|$ is either $1$, or $2$, or $3$, or $5$. Applying \cite[Lemma 3.5]{Tsygankov-10}, we get $\Stab_{St(A_5)}(\sigma(S_i))\simeq \mathbb{Z}_2$, $i=1, \ldots, r-m$. Hence $r-m=30$, and $Q_{r-m}=\Phi_1$.
The curve $\hat{C}$ is represented by the equation:
     \begin{equation}
\label{eqC1}
  \Equat(\hat{C})= p_0(t_0,t_1)x_0^2+2p_1(t_0,t_1)x_0x_1+p_2(t_0,t_1)x_1^2=0,
   \end{equation}
  where $p_i, \ i=0,1,2$ are binary forms with degree $2d$. Note that the degree is even, so as the divisor class $\hat{C}+ \sum_i S_i \in 2 \Pic{F}_0$ (see \cite[Lemma 3.6]{Tsygankov-10}), and $r-m=30$ is even. The form $\Disc(\hat{C})=p_0p_2-p_1^2$ has no multiple factors, since $\hat{C}$ is nonsingular.
We will apply the Segre embedding $\nu: \mathbb{P}^1\times \mathbb{P}^1 \rightarrow \mathbb{P}^3$ to represent the surface $S$ by equations.
Introduce the coordinates $(x:y:z:w)$ on $\mathbb{P}^3$. This embedding is given by $\nu: (x_0:x_1, t_0:t_1)\mapsto (x_0t_0:x_0t_1:x_1t_0:x_1t_1)$.
We choose some polynomials $F_i(x,y,z,w)$, $i=0,\ldots,2d+28$, such that $x_0^i x_1^{2d+28-i} \Phi_1 \Equat(\hat{C})=\nu^*(F_i)$. The surface $S$ is represented by the equations in $\mathbb{P}(d^{d+15},1^4)$ with the coordinates $u_i,\ x,\ y,\ z,\ w,\ i=0,\ldots,d+14$:
    \begin{equation}
  \label{sur3}
       \begin{aligned}
   u_i u_j=F_{i+j},\ 0 \le i \le j \le d+14,\\
           x^{j-i}u_i=u_j z^{j-i},\ y^{j-i}u_i=u_j w^{j-i},\ 0 \le i< j \le d+14,\\
            xw=yz.
 \end{aligned}
   \end{equation}

    \begin{theorem}
  \label{th3}
  Let $(S,G,\phi) \in \mathbb{CB}$ , and $G$ be a finite nonsolvable group. Suppose that $G_0\simeq 1$, $G_K \simeq \langle \iota \rangle \simeq \mathbb{Z}_2$, $\Sigma' \ne \Sigma$ and $S'/\iota \simeq \mathbb{F}_0$.
  Then there is defined a faithful action of $G_B$ (see \eqref{1}) on $\mathbb{F}_0$, and $G_B \subset PGL(2,\mathbb{C})\times PGL(2,\mathbb{C})$.  There is a $G$-invariant birational map $(S,G, \phi) \dashrightarrow (S', G,\phi')$ described in the diagram \eqref{diagramma}. The surface $S'$ can be represented by equations \eqref{sur3}. The parameter $d$ in \eqref{eqC1} is odd.
   The morphism $\phi': S' \rightarrow \mathbb{P}^1$ is given by
$$\phi':\ (u_0:\ldots:u_{d+14}:x:y:z:w) \mapsto \left \{
   \begin{aligned}
   &(x:y), \text{if}\ (x:y)\ne (0:0);\\
   &(z:w), \text{if}\ (z:w)\ne (0:0).
    \end{aligned}
    \right.              $$

   We have
   \begin{equation}
   \label{cond1}
   \Phi_1(t_0,t_1) \not | (p_0(t_0,t_1)t_0^2 + 2p_1(t_0,t_1)t_0t_1 + p_2(t_0,t_1)t_1^2),
   \end{equation}
    where $p_i,\ i=0,1,2$ are binary forms in \eqref{eqC1}, and $\Phi_1$ is the binary form from \eqref{grundform}.

  The group $G_B \subset PGL(2,\mathbb{C}) \times PGL(2, \mathbb{C})$ is the image of diagonal embedding $St(A_5)\hookrightarrow PGL(2,\mathbb{C}) \times PGL(2, \mathbb{C})$.
   The group $G$ is isomorphic to  $\bar{A}_5$.   This possibility for $G$ occur.

    The group $G$ acts on $S'$ by the following way. Embedding of $G_B \simeq St(A_5)$ to $PGL(2,\mathbb{C}) \times PGL(2, \mathbb{C})$ defines a unique embedding $St(\bar{A}_5) \hookrightarrow SL(2,\mathbb{C}) \times SL(2, \mathbb{C})$. This  defines an action of $St(\bar{A}_5)$ on the surface $S'$ given by the equation \eqref{sur3}. The action of $St(\bar{A}_5)$ on coordinates $u_i,\ i=0,\ldots,d+14$ coincides with the action on monomials $x_0^i x_1^{d+14-i},\ i=0, \ldots, d+14$.
An action of $G$ is generated by the action of $St(\bar{A}_5)$ and by the map
 $$(u_0:\ldots:u_{d+14}:x:y:z:w) \rightarrow (-u_0:\ldots:-u_{d+14}:x:y:z:w).$$
 \end{theorem}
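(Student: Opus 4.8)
The plan is to follow the strategy of Theorems \ref{th1} and \ref{th2}: first pin down the abstract structure of $G_B\simeq A_5$ as a subgroup of $PGL(2,\mathbb{C})\times PGL(2,\mathbb{C})$ by Goursat's Lemma, and then extract the arithmetic constraint on $d$ from the minimality criterion \cite[Lemma 3.5]{Tsygankov-10}. Since $G_K$ is solvable and $G$ is not, $G_B\simeq A_5$; as $G_B$ preserves the conic bundle projection $\sigma$ on $S'/\iota\simeq\mathbb{F}_0$, it cannot interchange the two rulings, so $G_B\subset PGL(2,\mathbb{C})\times PGL(2,\mathbb{C})$. Writing $G_B\simeq\pi_1(G_B)\triangle_D\pi_2(G_B)$ and using that the second projection is the faithful base action $\Imm(\phi_*)$, I get $\Ker(\pi_2|G_B)=1$, whence, $A_5$ being simple, either $\pi_1(G_B)=1$ (so $G_B=1\times St(A_5)$) or $G_B$ is the diagonal $St(A_5)$. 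The product case is excluded by minimality: any element of $\Stab_{G_B}(\sigma(S_i))$ would then act trivially on the first factor, hence fix both points of $S_i\cap\hat C$, so $x_{i1}$ and $x_{i2}$ could never lie in one $G_B$-orbit. This leaves the diagonal embedding.

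In the diagonal case each $\Stab_{St(A_5)}(\sigma(S_i))\simeq\mathbb{Z}_2$ is generated by an involution $g_i$ acting on $S_i\simeq\mathbb{P}^1$ through the \emph{same} element of $PGL(2,\mathbb{C})$; minimality forces $g_i$ to interchange $x_{i1},x_{i2}$, equivalently neither point may be fixed by $g_i$ (a two-element $g_i$-invariant set is either pointwise fixed or interchanged). One fixed point of $g_i$ on $S_i$ is the diagonal point $(x_0:x_1)=\sigma(S_i)$, which lies on $\hat C$ exactly when $\sigma(S_i)$ is a root of $D_\Delta:=p_0t_0^2+2p_1t_0t_1+p_2t_1^2$, the Clebsch--Gordan projection of $\Equat(\hat C)$ used in Lemma \ref{lem1}. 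Since $D_\Delta$ is $St(\bar{A}_5)$-invariant and the roots of $\Phi_1$ form a single orbit, either all or none of the $\sigma(S_i)$ are roots of $D_\Delta$, so minimality is equivalent to $\Phi_1\nmid D_\Delta$, which is condition \eqref{cond1}.

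The crux is deducing the parity of $d$ from \eqref{cond1}. Here I invoke the classical structure of the invariant ring of $St(\bar{A}_5)$: it is free of rank two over $\mathbb{C}[\Phi_2,\Phi_3]$ with basis $1,\Phi_1$, and the three Gr\"undformens $\Phi_1,\Phi_2,\Phi_3$ have pairwise disjoint zero sets, so $\Phi_1$ divides no nonzero element of $\mathbb{C}[\Phi_2,\Phi_3]$. Writing $D_\Delta=A(\Phi_2,\Phi_3)+\Phi_1 B(\Phi_2,\Phi_3)$, condition \eqref{cond1} forces $A\neq 0$, hence the degree $2d+2$ component of $\mathbb{C}[\Phi_2,\Phi_3]$ is nonzero, i.e. $2d+2\in\langle 20,12\rangle\subseteq 4\mathbb{Z}$; therefore $d$ is odd. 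The isomorphism type then follows as in Theorem \ref{th2}: lifting the diagonal $St(A_5)$ to $St(\bar{A}_5)\subset SL(2,\mathbb{C})\times SL(2,\mathbb{C})$, the central element $-I$ acts on the $u_i$ (the monomials $x_0^i x_1^{d+14-i}$ of degree $d+14$) by $(-1)^{d+14}=-1$ and trivially on $x,y,z,w$, so it coincides with $\iota$; thus $\iota$ already lies in the image of $St(\bar{A}_5)$ and $G\simeq\bar{A}_5$. Existence of a nonsingular invariant $\hat C$ with odd $d$ satisfying \eqref{cond1} is obtained by a Bertini argument identical to Lemma \ref{lem1}, taking the $\Phi_1$-free part of $\xi(\Equat(\hat C))$ to be a suitable monomial $\Phi_2^a\Phi_3^b$, and the description of the $G$-action on the equations \eqref{sur3} is then routine. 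I expect the parity computation via the invariant ring to be the only genuinely nontrivial point, every other step being parallel to Theorems \ref{th1} and \ref{th2}.
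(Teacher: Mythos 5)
Your proposal is correct and follows essentially the same route as the paper's proof: Goursat's Lemma together with faithfulness of the base action to reduce $G_B$ to the diagonal $St(A_5)$, the minimality criterion of \cite[Lemma 3.5]{Tsygankov-10} translated into condition \eqref{cond1} via the restriction of $\Equat(\hat C)$ to the diagonal, an invariant-theoretic degree argument forcing $d$ odd, and a Bertini-type existence argument as in Lemma \ref{lem1}; in fact you supply details (the exclusion of $1\times St(A_5)$, the identification of $\iota$ with the image of $(-I,-I)$, hence $G\simeq \bar A_5$) that the paper dismisses as obvious.

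One assertion should be corrected, although it is harmless where you invoke it: it is \emph{false} that $\Phi_1$ divides no nonzero element of $\mathbb{C}[\Phi_2,\Phi_3]$. By the icosahedral syzygy, $\Phi_1^2$ is itself (up to scalars) a polynomial in $\Phi_2,\Phi_3$, so $\Phi_1$ divides the nonzero element $\Phi_1^2\in\mathbb{C}[\Phi_2,\Phi_3]$; disjointness of the zero sets only rules out divisibility of \emph{monomials} $\Phi_2^a\Phi_3^b$. Fortunately your logic never needs the general claim: the deduction that \eqref{cond1} forces $A\neq 0$ uses only the trivial direction ($A=0$ would give $D_\Delta=\Phi_1 B$, contradicting \eqref{cond1}), and your existence construction uses only monomials, for which coprimality with $\Phi_1$ does hold. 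So the parity conclusion $2d+2\in 4\mathbb{Z}$, hence $d$ odd, stands, and it is exactly the structural reason behind the paper's statement that for even $d$ every element of $R_{2d+2}^{St(\bar{A}_5)}$ is divisible by $\Phi_1$.
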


      \begin{proof}
  The subgroup $G_B \subset \Aut(\mathbb{F}_0)$ acts trivially on $\Pic(\mathbb{F}_0)$. Hence $G_B \subset PGL(2,\mathbb{C}) \times PGL(2,\mathbb{C})$. Consider projections $\pi_i: PGL(2,\mathbb{C}) \times PGL(2,\mathbb{C}) \rightarrow PGL(2,\mathbb{C})$, $i=1,2$ on the first and the second factor respectively.
       By Goursat's Lemma (see \cite[Lemma 4.1]{Dolgachev-Iskovskikh}) we get that $G_B \simeq \pi_1(G_B) \triangle_D \pi_2(G_B)$, where $D \simeq \Imm(\pi_1|G_B)/\Ker(\pi_2|G_B)$. We have $D \simeq 1$ or $A_5$.

      By \cite[Lemma 3.5]{Tsygankov-10} we need to find conditions, when points $x_{i1}$ and $x_{i_2}$ for each $i=1,\ldots,30$ lie in the same orbit under an action of $G_B$. Obviously, $D\simeq A_5$. We conjugate $G_B \subset PGL(2,\mathbb{C})\times PGL(2,\mathbb{C})$ to the image of diagonal embedding $St(A_5) \hookrightarrow PGL(2,\mathbb{C})\times PGL(2,\mathbb{C})$. It's easy to check that none of points $x_{i1}$ and $x_{i_2}$ for each $i=1,\ldots,30$ lies on  the diagonal $x_0t_1-x_1t_0=0$. This is a sufficient condition, and it's equivalent to \eqref{cond1}.

    Therefore we need to prove existence of  curves $\hat{C} \subset \mathbb{F}_0$ with odd  parameter $d$, such that  condition \eqref{cond1} holds. Also we will prove that $d$ cannot be even.
       We will use notations and arguments of Lemma \ref{lem1}.
      Consider a linear system $\mathcal{J}$ of $St(\bar{A}_5)$-invariant curves with bidegree $(2,2d)$ in $\mathbb{F}_0$. Consider the projection $\xi: R_2 \otimes R_{2d} \rightarrow R_{2d+2}$. It's is given by the polynomial $p_0t_0^2+2p_1t_0t_1+p_2t_1^2$.

Suppose that $d$ is even. It's easy to check that any polynomial $f(t_0,t_1)\in R_{2d+2}^{St(\bar{A}_5)}$  is  divided by $\Phi_1$. Therefore it's impossible.

Let $d$ is odd. In Lemma \ref{lem1} we proved that a general member of $\mathcal{J}$ is nonsingular, if $d=30k + 15, k \in \mathbb{N}, \ k\ge 2$. But it's obvious that there exist polynomial $f(t_0,t_1)\in R_{2d+2}^{St(\bar{A}_5)}$ with degree $2d+2=60k+32$, $k\ge 2$, which is not divided by $\Phi_1$.

  The remaining arguments follow from the construction of  equations \eqref{sur3} and are obvious.
     \end{proof}

In the next theorem we consider case $e>0$.
    \begin{theorem}
   \label{th4}
                     Let $(S,G,\phi) \in \mathbb{CB}$, and $G$ be a finite nonsolvable group. Suppose that $G_0\simeq 1$, $G_K \simeq \langle \iota \rangle \simeq \mathbb{Z}_2$, $\Sigma' \ne \Sigma$, and $S'/\iota \simeq \mathbb{F}_e$, $e>0$.    Then there is a $G$-invariant birational map $(S,G, \phi) \dashrightarrow (S', G,\phi')$ described in diagram \eqref{diagramma}. The surface $S'$ contains a $G$-invariant curve $E$, which is the preimage of exceptional section $\mathbb{F}_e$.
 Consider the contraction of this curve $(S',G,\phi') \rightarrow (S'',G,\phi'')$, where a map $\phi'': S'' \dashrightarrow \mathbb{P}^1$ is defined by $\phi'$.The surface $S''$ is given by the following equation in $\mathbb{P}(d+e+15,e,1,1)$ with the coordinates $(u:x:t_0:t_1)$:
              \begin{equation}
 \label{surf2}
             u^2+\Phi_1(t_0,t_1) (p_0(t_0,t_1)x^2+2p_1(t_0,t_1)x+p_2(t_0,t_1))=0,
         \end{equation}
   where $p_i, \ i=0,1,2$ are binary forms with degree $2d$, $2d+e$, $2d+2e$, respectively, and $\Phi_1$ is the binary form from \eqref{grundform}. Also $\Phi_1 \not | (p_0 p_2-p_1^2)$.

The map $\phi''$ is given by
$$\phi'':\ (u:x:t_0:t_1) \mapsto (t_0:t_1).$$

  Moreover, $e \equiv 2$ $(mod$ $4)$.
   The group $G$ is generated  by the maps:
   \begin{equation*}
     \begin{aligned}
       &u \mapsto -u,\\
          & (u:x:t_0:t_1) \mapsto (u:x+F_e(t_0,t_1):at_0+bt_1:ct_0+d't_1),
     \end{aligned}
   \end{equation*}
 where
             \begin{equation}
      \label{matr2}
                    \begin{pmatrix}
                           a & b\\
             c & d'
                 \end{pmatrix}
                      \in St(\bar{A}_5),
       \end{equation}
  and $F_e(t_0,t_1)$ is a some binary form with degree $e$, unique for each matrix \eqref{matr2}.
  The group $G$ is isomorphic to
     \begin{equation}
          \label{opG2}
                       G \simeq \left \{
                                 \begin{aligned}
                  &\bar{A}_5, \text{if $d$ is even},\\
                         & \mathbb{Z}_2 \times A_5, \text{if $d$ is odd}.
                                 \end{aligned}
                       \right.
    \end{equation}
                   All possibilities for $G$ occur.
                                  \end{theorem}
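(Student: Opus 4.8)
The plan is to follow the construction of Theorem~\ref{th2}, presenting $S''$ as a double cover of $\mathbb{P}(e,1,1)$, while incorporating the extra branching over the thirty fibres of $\Sigma\setminus\Sigma'$ that was isolated in Section~\ref{conbund2}. Starting from the $G$-equivariant diagram~\eqref{diagramma} I would contract the $G$-invariant preimage $E$ of the exceptional section of $\mathbb{F}_e$ to pass from $S'$ to $S''$, and blow down $\mathbb{F}_e\to\mathbb{P}(e,1,1)$ with coordinates $(x:t_0:t_1)$, $\degg x=e$. By the analysis of Section~\ref{conbund2} the branch locus of $h'\colon S'\to\mathbb{F}_e$ is the union of $\hat C$ with the $r-m=30$ fibres $S_i$, and after conjugating $G_B$ to $St(A_5)$ the product of their equations is forced to be the degree-$30$ Grundform $\Phi_1$ of~\eqref{grundform} (the thirty-point orbit of $A_5$). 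Writing $\hat C$ as $p_0x^2+2p_1x+p_2=0$ with $\degg p_0=2d$ and taking the double cover of $\mathbb{P}(e,1,1)$ branched along $\Phi_1\,(p_0x^2+2p_1x+p_2)$ produces equation~\eqref{surf2}; matching weights gives $\degg p_i=2d,\,2d+e,\,2d+2e$ and $\degg u=d+e+15$, while nonsingularity of $\hat C$ is exactly the stated condition $\Phi_1\nmid(p_0p_2-p_1^2)$.

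For the group I would lift $G_B\simeq A_5\subset PGL(2,\mathbb{C})$ to $St(\bar A_5)\subset SL(2,\mathbb{C})$ acting on $(t_0,t_1)$ as in~\eqref{matr2}. Since every character of $A_5$ is trivial, the coordinate $x$ carries no scaling and the generators take the stated form $x\mapsto x+F_e(t_0,t_1)$, the cocycle $F_e$ being determined (and trivialisable) by semisimplicity of $\mathbb{C}[St(\bar A_5)]$. The whole group is then read off from the central element $-I\in St(\bar A_5)$, which fixes $(t_0:t_1)$ and sends $(u:x:t_0:t_1)\mapsto(u:x:-t_0:-t_1)$. Rescaling by $\lambda=-1$ in $\mathbb{P}(d+e+15,e,1,1)$ and using that $e$ is even, this equals $((-1)^{d+e+15}u:x:t_0:t_1)$, i.e. $(-1)^{d+1}$ acting on $u$. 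For $d$ odd the map is the identity, so a faithful $A_5$ acts and the separate involution $u\mapsto-u$ generating $G_K$ splits off, giving $G\simeq\mathbb{Z}_2\times A_5$; for $d$ even it is the involution $u\mapsto-u$ itself, so the central $-I$ is identified with $\iota$ and $G\simeq\bar A_5$. This reproduces~\eqref{opG2}.

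It remains to pin down the parity of $e$. That $e$ is even follows exactly as in Theorem~\ref{th2}: a faithful $A_5$, not $\bar A_5$, must embed into $\Aut(\mathbb{P}(e,1,1))\supset GL(2,\mathbb{C})/\mu_e$, which forces $-I\in\mu_e$. The refinement $e\equiv2\pmod4$, which I expect to be the main obstacle, comes from the thirty branch fibres. Each point of the $30$-orbit has stabiliser $\mathbb{Z}_2$ in $St(A_5)$, whose preimage in $St(\bar A_5)$ is cyclic of order $4$, generated by some $\rho$ with $\rho^2=-I$ and eigenvalues $\pm i$ on $(t_0,t_1)$. A direct computation in the weighted fibre $\mathbb{P}(e,1)$ over the fixed point shows that $\rho$ acts on the fibre coordinate $x$ by $i^{e}$ (the sign being immaterial); by~\cite[Lemma~3.5]{Tsygankov-10} minimality of the triple requires the two points $x_{i1},x_{i2}$ of $\hat C\cap S_i$ to lie in one $G_B$-orbit, hence $\rho$ must act nontrivially on $S_i$, so $i^{e}\ne1$. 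Combined with $e$ even this gives $e\equiv2\pmod4$; the careful bookkeeping of how $\rho$ scales the weight-$e$ coordinate is the delicate point.

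Finally, to see that both cases of~\eqref{opG2} occur I would exhibit nonsingular $St(\bar A_5)$-invariant curves $\hat C$ for odd and for even $d$, choosing $p_0,p_1,p_2$ among products of $\Phi_1,\Phi_2,\Phi_3$ of the prescribed degrees and applying a Bertini argument in the spirit of Lemma~\ref{lem1} and Theorem~\ref{th3}. The base-point analysis is identical to the one already performed there, so the only new ingredient is a dimension count ensuring that $p_0,p_1,p_2$ can be chosen with $\Phi_1\nmid(p_0p_2-p_1^2)$, which simultaneously guarantees nonsingularity of $\hat C$ and the minimality condition $x_{i1}\ne x_{i2}$ on every branch fibre.
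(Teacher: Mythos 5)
Your proposal is correct and follows essentially the same route as the paper's own proof: the same double-cover construction over $\mathbb{P}(e,1,1)$ with the thirty branch fibres forced to be $\Phi_1=0$ by the stabilizer analysis, the same parity arguments (lifting $A_5$ to $\Aut(\mathbb{P}(e,1,1))$ for $e$ even, and the order-$2$ elements acting on the fibre coordinate by $i^e$ for $e\equiv 2 \pmod 4$ via the minimality criterion of \cite[Lemma 3.5]{Tsygankov-10}), the same identification of $G$ from the action of $-I\in St(\bar{A}_5)$ rescaled in the weighted coordinates, and the same existence strategy via invariant binary forms. Your computation $(-1)^{d+e+15}=(-1)^{d+1}$ makes explicit what the paper compresses into ``we easily get \eqref{opG2}'', but the argument is the same.
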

         \begin{proof}
                  We will use the following construction to represent the surface $S''$ by equations. Consider the morphism $\mathbb{F}_e \rightarrow \mathbb{P}(e,1,1)$, which is the blowing down of  exceptional section $\mathbb{F}_e$.          Introduce on $\mathbb{P}(e,1,1)$ the coordinates $(x:t_0:t_1)$. The map $\phi':S' \rightarrow \mathbb{P}^1$ induces the projection $\sigma: (x:t_0:t_1) \mapsto (t_0:t_1)$. The fibres $S_i \subset \mathbb{F}_e$, $i=1,\ldots,r-m$ are represented by the equations:
     $$
   S_i: a_i t_0 + b_i t_1=0.
    $$
  Consider the form $Q_{r-m}= \prod_{i=1}^{r-m} (a_i t_0 + b_i t_1) $. We conjugate the group $G_B \simeq A_5 \subset PGL(2,\mathbb{C})$ to the group $St(A_5)$. Let $\Stab_{St(A_5)}(\sigma(S_i))$, $i=1,\ldots,r-m$ be a stabilizer of  point $\sigma(S_i) \subset \mathbb{P}^1$ in the group $St(A_5)$.  Considering equations \eqref{grundform}, we get that $\left| \Stab_{St(A_5)}(\sigma(S_i)) \right.|$ is either $1$, or $2$, or $3$, or $5$. Applying \cite[Lemma 3.5]{Tsygankov-10}, we get $\Stab_{St(A_5)}(\sigma(S_i))\simeq \mathbb{Z}_2$, $i=1, \ldots, r-m$. Hence $r-m=30$, and $Q_{r-m}=\Phi_1$ (see \eqref{grundform}).
The curve $\hat{C}$ is represented by the following equation in $\mathbb{P}(e,1,1)$ with the coordinates $(x:t_0:t_1)$:
       $$
          p_0(t_0,t_1)x^2+2p_1(t_0,t_1)x+p_2(t_0,t_1)=0.
              $$
   Each fibre $S_i$, $i=1, \ldots,30$ intersects the curve $\hat{C}$ in two distinct points: $x_{i1}$ and $x_{i2}$. Hence $\Phi_1 \not | (p_0 p_2-p_1^2)$.
                 We construct a double cover of $\mathbb{P}(e,1,1)$ branched along $\hat{C}$ and $\Phi_1(t_0,t_1)=0$. We get the surface $S''$ given by the equation \eqref{surf2}.

          Note that $\degg(p_0)$ is even, since $\hat{C} + \sum_i S_i\in 2 \Pic(\mathbb{F}_e)$ (see \cite[Lemma 3.6]{Tsygankov-10}), and $r-m=30$ is even. Denote the degree as $2d$.

Then we apply the arguments as in Theorem \ref{th2}. We prove that $e$ is even, and conjugate $G_B$ to a group consisting of the following maps
     $$
     (x:t_0:t_1) \mapsto (x+F_e(t_0,t_1): at_0+bt_1:ct_0+d't_1),
                 $$
 where the coefficients $a$, $b$, $c$, $d'$ and the binary form $F_e$ satisfy conditions of the Theorem.

    By \cite[Lemma 3.5]{Tsygankov-10} we need to find conditions, when the points $x_{i1}$ and $x_{i_2}$ for each $i=1,\ldots,30$ lie in the same orbit under an action of $G_B$. Obviously, it's sufficient to check, that each element $g \in G_B$ with $\ord(g)=2$ doesn't have fixed points on the curve $\hat{C}$. The element $g$ can be conjugated to the map:
 $$
   (x:t_0:t_1) \mapsto (x:it_0: -it_1).
     $$
 It's easy to see that $g$ doesn't have fixed points on $\hat{C}$, iff $e \equiv 2$ $(mod$ $4)$. We easily get \eqref{opG2}.

   We need to prove existence of curves $\hat{C} \subset \mathbb{F}_e$ with odd and even parameters $d$, such that listed above conditions holds. We assume that $G_B$ is a group consisting of maps
$$
     (x:t_0:t_1) \mapsto (x: at_0+bt_1:ct_0+d't_1),
        $$
 with condition \eqref{matr2}.

Let $d$ is even. We take $e=34$, and the curve $\hat{C}$ is represented by the following equation in $\mathbb{P}(34,1,1)$ with the coordinates $(x:t_0:t_1)$
   $$
               \Phi_3(t_0,t_1)x^2+Q_{60}(t_0,t_1)\Phi_2(t_0,t_1)=0,
    $$
  where $\Phi_2$ and $\Phi_3$ are binary forms from \eqref{grundform}, and $Q_{60}\in \mathcal{I}^{St(\bar{A}_5)}$ (see Notation \ref{mathcalI}) is a some binary form with degree $60$ and without multiple factors.

Let $d$ is odd. We can employ here example constructed in proof of Theorem \ref{th2}.

     \end{proof}

    \subsection{Case, when $G_0\simeq 1$, $G_K \simeq \mathbb{Z}_2^2$.}
\label{conbund3}
   In this section we prove the next theorem.
 \begin{theorem}
  \label{th5}
  Let $(S,G,\phi) \in \mathbb{CB}$, and $G$ be a finite nonsolvable group. Suppose that $G_0\simeq 1$, $G_K \simeq  \mathbb{Z}_2^2$. Then there exists an embedding $S \hookrightarrow \mathbb{P}(\mathcal{E})$, where $\mathcal{E}$ is a line bundle on $\mathbb{P}^1$. We have $\mathcal{E}= \mathcal{E}_0 \oplus \mathcal{E}_1 \oplus \mathcal{E}_2$, and isomorphisms $f_i: \mathcal{E}_i \rightarrow \mathcal{O}(a_i)$, $i=0,1,2$, $a_0=0$, $0\le a_1 \le a_2$.
The surface $S$ can be represented by the equation in $\mathbb{P}(\mathcal{E}) \simeq  \mathbb{P}(\mathcal{O}\oplus \mathcal{O}(a_1) \oplus \mathcal{O}(a_2))$:
\begin{equation}
    \label{eq3}
     \sum_{i=0}^2 \sum_{j,k=0}^{a_i} p_i^{j,k}(t_0,t_1) \xi_i^j \xi_i^k=0,
             \end{equation}
  where $p_i^{j,k}$ are binary forms with degree $d$, and $\xi_i^j=f_i^{-1}(t_0^j t_1^{a_i-j})$, $i=0,1,2$, $0\le j \le a_i$. The morphism $\phi: S \rightarrow \mathbb{P}^1$ is induced by the natural projection $\mathbb{P}(\mathcal{E}) \rightarrow \mathbb{P}^1$.
   The following conditions holds.
\begin{equation}
 \label{cond3}
                      \begin{aligned}
          & H_0=p_0^{0,0}(t_0,t_1) \in \mathcal{I}^{St(\bar{A}_5)},\\
      &H_1=\sum_{j,k=0}^{a_1} p_1^{j,k}(t_0,t_1) t_0^{j+k}t_1^{2a_1-j-k} \in \mathcal{I}^{St(\bar{A}_5)},\\
    & H_2=\sum_{j,k=0}^{a_2} p_2^{j,k}(t_0,t_1) t_0^{j+k}t_1^{2a_2-j-k} \in \mathcal{I}^{St(\bar{A}_5)}.
              \end{aligned}
\end{equation}
Also the binary forms $H_0$, $H_1$ and $H_2$ do not have multiple and pairwise common factors.

 The group $G_K$ acts by the following way (see Theorem \ref{th1.4}).
\begin{equation*}
    \begin{aligned}
  \iota_0(\xi)=\mp \xi_0 \pm \xi_1 \pm \xi_2,\\
           \iota_1(\xi)=\pm \xi_0 \mp \xi_1 \pm \xi_2,\\
      \iota_2(\xi)=\pm \xi_0 \pm \xi_1 \mp \xi_2,\\
 \end{aligned}
 \end{equation*}
    for any $\xi=\xi_0+\xi_1+\xi_2$, $\xi_i \in \mathcal{E}_i$, $i=0,1,2$.

   The action of $G$ on the surface $S$ is generated by the action of $G_K$ and an action of $St(\bar{A}_5)$.
 The action of $St(\bar{A}_5)$ on sections $\xi_i^j=f_i^{-1}(t_0^j t_1^{a_i-j})$, $i=0,1,2$, $0\le j \le a_i$ is induced by the action on $\mathbb{C}[t_0,t_1]$. We have
    \begin{equation}
   \label{G}
               G \simeq \left \{
           \begin{aligned}
          & \mathbb{Z}_2 \times \bar{A}_5, \text{if either $a_1$, or $a_2$ is odd},\\
          & \mathbb{Z}_2^2 \times A_5, \text{otherwise.}
       \end{aligned}
        \right.
     \end{equation}
All possibilities for $G$ occur.
     \end{theorem}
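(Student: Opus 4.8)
The plan is to realize $(S,G,\phi)$ as a diagonal conic bundle inside a $\mathbb{P}^2$-bundle over $\mathbb{P}^1$ and then read off all the asserted data from the $\mathbb{Z}_2^2$-eigenspace decomposition. First I would use the relative anticanonical map: since every fibre of $\phi$ is a conic, $-K_{S/\mathbb{P}^1}$ is relatively very ample of relative degree $2$, so $\mathcal{E}:=\phi_*\mathcal{O}_S(-K_{S/\mathbb{P}^1})$ is a rank-$3$ bundle and the induced $G$-equivariant embedding $S\hookrightarrow \mathbb{P}(\mathcal{E})$ presents $S$ as a divisor of relative degree $2$. By Grothendieck's theorem $\mathcal{E}$ splits into line bundles, and since the embedding is canonical it carries the action of $G$, in particular of $G_K\simeq\mathbb{Z}_2^2$; I would decompose $\mathcal{E}$ into the eigen-subbundles for the characters of $\mathbb{Z}_2^2$.

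The key structural step is to show that exactly the three nontrivial characters occur, each once. Here I would invoke Theorem \ref{th1.4}: each nontrivial $\iota_i\in G_K$ fixes pointwise an irreducible smooth bisection. An involution acting fibrewise as $\mathrm{diag}(\pm1,\pm1,\pm1)$ fixes, in each $\mathbb{P}^2$-fibre, the union of its eigenspaces; for the fixed locus on the conic $S$ to be an irreducible bisection the eigenvalue pattern must projectively be ``one sign against two'', i.e. $\iota_i$ negates a single line subbundle and fixes the complementary one. Since $\iota_0\iota_1=\iota_2$, the three involutions must negate three distinct line subbundles, so $\mathcal{E}=\mathcal{E}_0\oplus\mathcal{E}_1\oplus\mathcal{E}_2$ with $\iota_i|_{\mathcal{E}_j}=(-1)^{\delta_{ij}}$ and no trivial character present. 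Writing $\mathcal{E}_i\simeq\mathcal{O}(a_i)$, and using that $A_5$ is simple so $G_B$ admits no nontrivial homomorphism to $S_3$ and hence fixes each $\mathcal{E}_i$, I am free to relabel and twist so that $a_0=0\le a_1\le a_2$. This yields the stated $\mathbb{P}(\mathcal{E})$ together with the displayed $G_K$-action.

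Next I would write the defining quadratic form $\sum_{i,j}q_{ij}\zeta_i\zeta_j=0$ in fibre coordinates $\zeta_i$ adapted to the decomposition and impose invariance. Each $\iota_k$ multiplies $\zeta_i\zeta_j$ by $(-1)^{\delta_{ik}+\delta_{jk}}$, so $G_K$-invariance kills every off-diagonal term and leaves the diagonal form, which after expanding in the monomial basis $\xi_i^j=f_i^{-1}(t_0^jt_1^{a_i-j})$ is precisely \eqref{eq3}; the diagonal coefficient is the binary form $H_i=\sum_{j,k}p_i^{j,k}t_0^{j+k}t_1^{2a_i-j-k}$. Lifting $G_B\simeq St(A_5)$ to $St(\bar{A}_5)\subset SL(2,\mathbb{C})$ and using that it preserves each $\mathcal{E}_i$ while acting with trivial character on the Gr\"undformen, invariance of the equation forces $H_0,H_1,H_2\in\mathcal{I}^{St(\bar{A}_5)}$, giving \eqref{cond3}. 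The remaining conditions are geometric: a local computation at a zero $t^\ast$ of the discriminant shows that a double factor of some $H_i$ makes all partials vanish and produces a singular point of $S$, while a common zero of two of the $H_i$ forces a non-reduced fibre $\{\zeta_k^2=0\}$; hence for $S$ smooth with a genuine conic bundle structure the $H_i$ must be free of multiple and of pairwise common factors.

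Finally I would pin down $G$ and verify that every case occurs. The central element $-I\in St(\bar{A}_5)$ fixes the base and acts on $\mathcal{E}_i\simeq\mathcal{O}(a_i)$ by $(-1)^{a_i}$, hence fibrewise as $\mathrm{diag}(1,(-1)^{a_1},(-1)^{a_2})$; comparing with the Klein involutions, when both $a_1,a_2$ are even this is the identity, so $St(\bar{A}_5)$ acts through $A_5$ and $G=G_K\times A_5\simeq\mathbb{Z}_2^2\times A_5$, whereas if some $a_i$ is odd then $-I$ coincides with one of the $\iota_i$, the image of $St(\bar{A}_5)$ is $\bar{A}_5$, and $G=\bar{A}_5\times\langle\iota\rangle\simeq\mathbb{Z}_2\times\bar{A}_5$; in each case commutativity of $G_K$ with $St(\bar{A}_5)$ holds because a constant scalar on an invariant line subbundle is central. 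This proves \eqref{G}. For existence I would, as in Lemma \ref{lem1}, assemble explicit admissible $H_i$ out of the Gr\"undformen $\Phi_1,\Phi_2,\Phi_3$ of \eqref{grundform} for each prescribed parity of $(a_1,a_2)$, choosing the free parameters in general position and applying a Bertini-type argument to guarantee absence of multiple and common factors. I expect this existence step to be the main obstacle: one must simultaneously satisfy membership in $\mathcal{I}^{St(\bar{A}_5)}$, the degree constraints tied to the chosen $a_i$, and the transversality conditions, which is exactly where the explicit invariant theory of $St(\bar{A}_5)$ does the real work.
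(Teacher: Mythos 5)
Your proposal is correct and follows essentially the same route as the paper's proof: the relative anticanonical embedding $S\hookrightarrow\mathbb{P}(\mathcal{E})$, diagonalization of the $G_K$-action on $\mathcal{E}$, $G_K$-invariance killing the off-diagonal terms, simplicity of $A_5$ (no nontrivial homomorphism $G\rightarrow S_3$) to fix each $\mathcal{E}_i$, invariance of the forms $H_i$, and determination of $G$ from the parities of $a_1,a_2$ via the central element of $St(\bar{A}_5)$. The only differences are minor: the paper proves diagonalizability (its Lemma \ref{lem2}) by a case analysis on coincidences among the degrees $a_i$ rather than by your eigen-subbundle plus fixed-bisection argument, and it obtains \eqref{cond3} by first showing the products $H_iH_j$ (branch loci of the bisections $C_i$) are invariant and then extracting each $H_i$ using triviality of the characters, while the existence step you single out as the main obstacle is dismissed in the paper as an easy check.
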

  \begin{proof}
      Denote as $f$ the fibre's divisor class of the conic bundle $(S,G,\phi)$. We have $-K_S \cdot f=2$.
      It's well known that a line bundle $\mathcal{O}(-K_S)$ is locally free of rank $3$.   Hence the line bundle $\mathcal{O}(-K_S)$ is relatively very ample and defines an embedding $S \hookrightarrow \mathbb{P}(\mathcal{E}')$, where $\mathcal{E}'=\phi_*(\mathcal{O}(-K_S))$. By Grothendieck theorem we have $\mathcal{E}'=\mathcal{O}(a_0')\oplus \mathcal{O}(a_1')\oplus \mathcal{O}(a_2')$. Obviously, we  can take $a_0' \le a_1' \le a_2'$. Hence we can take the bundle $\mathcal{E}$ in the statement of theorem to be equal $\mathcal{E}=\mathcal{E}' \otimes \mathcal{O}(-a_0')$.

  An action of $G$ on $\mathcal{O}(-K_S)$ defines an action on $\mathcal{E}$. In the next lemma we show
  that the action of $G_K$ on $\mathcal{E}$ is diagonalizable.
         \begin{lemma}
         \label{lem2}
  We can choose a  decomposition $\mathcal{E} = \mathcal{E}_0\oplus \mathcal{E}_1\oplus \mathcal{E}_2$, where $\mathcal{E}_i \simeq \mathcal{O}(a_i)$, $i=0,1,2$, such that $G_K$ acts by the following way. Denote three different nontrivial elements in $G_K \simeq \mathbb{Z}_2^2$ as $\iota_0$, $\iota_1$, $\iota_2$.
Then
     \begin{equation}
    \begin{aligned}
           \label{generatK}
  \iota_0(\xi)=\mp \xi_0 \pm \xi_1 \pm \xi_2,\\
           \iota_1(\xi)=\pm \xi_0 \mp \xi_1 \pm \xi_2,\\
      \iota_2(\xi)=\pm \xi_0 \pm \xi_1 \mp \xi_2,\\
 \end{aligned}
 \end{equation}
    for any $\xi=\xi_0+\xi_1+\xi_2$, $\xi_i \in \mathcal{E}_i$, $i=0,1,2$.
    \end{lemma}
   \begin{proof}
 Recall that $a_0=0$, $0 \le a_1 \le a_2$. Also remind that $G_K$ acts trivially on the base of fibration $\phi$.

Suppose that $0 < a_1 < a_2$. Then each bundle $\mathcal{E}_i$, $i=0,1,2$ is invariant under the action of $G_K$. Hence the statement is obvious.

Suppose that $a_i=a_j$, $a_i\ne a_k$ for some $i\ne j \ne k \ne i$. Without loss of generality we can take
 $0=a_1 < a_2$.  Then the action  of $G_K$ on $\mathcal{E}_0 \oplus \mathcal{E}_1$ defines an embedding $G_K \hookrightarrow GL(2,\mathbb{C})$. But, obviously, any subgroup $\mathbb{Z}_2^2 \subset GL(2,\mathbb{C})$ is diagonalizable.

 Suppose that $0=a_1=a_2$. Then the statement follows from the fact that any subgroup $\mathbb{Z}_2^2 \subset GL(3,\mathbb{C})$ is diagonalizable.
      \end{proof}

  We apply Lemma \ref{lem2}.  Fix isomorphisms $f_i: \mathcal{E}_i \rightarrow \mathcal{O}(a_i)$, $i=0,1,2$. Let $\xi_i^j=f_i^{-1}(t_0^j t_1^{a_i-j})$, $i=0,1,2$, $0\le j \le a_i$ be  generators of global section spaces of  bundles $\mathcal{E}_i$.
     Then the surface $S$ is presented by the following equation in $\mathbb{P}(\mathcal{E})$
   $$
     \sum_{0 \le i\le j \le 2} \sum_{k\le a_i, l\le a_j} p_{i,j}^{k,l}(t_0,t_1)  \xi_i^k \xi_j^l =0,
       $$
    where $p_{i,j}^{k,l}(t_0,t_1)$ are binary forms with degree $d$. But it easily follows from equations \eqref{generatK} that $p_{i,j}^{k,l}=0$, if $i\ne j$. Hence the surface $S$ can be represented by equation
           \eqref{eq3}.

    Let's find relations on the forms $p_i^j$, $0\le j \le a_i$, $i=0,1,2$.
 By Theorem \ref{th1.4} each nontrivial element $\iota_i,\ i=0,1,2$ of the subgroup $G_K$ fixes pointwise
   an irreducible smooth bisection $C_i$. Hence, there is defined an action of $G$ on the set of these curves, since $G_K\triangleleft G$. This action defines a homomorphism $\sigma: G \rightarrow S_3$. But $G_B\simeq A_5$ is simple.
Therefore $\sigma$ is trivial. The curves $C_i$, $i=0,1,2$ on the surface $S$ are cut out by the hypersurfaces:
  $$
     \xi_i^j=0,\ 0\le j \le a_i.
   $$
We conjugate $G_B \subset PGL(2,\mathbb{C})$ to $St(A_5)$. We employ now notations \eqref{cond3}. From triviality of $\sigma$ we get: $H_0 H_1$, $H_0 H_2$, $H_1 H_2 \in \mathcal{I}^{St(\bar{A}_5)}$. Hence $H_0^2$, $H_1^2$, $H_2^2 \in \mathcal{I}^{St(\bar{A}_5)}$. One knows that all characters of $St(\bar{A}_5)$ are trivial (see \eqref{grundform}). Therefore we get \eqref{cond3}.

It's easy to check that the surface $S$ is nonsingular, iff the binary forms $H_0$, $H_1$ and $H_2$
do not have multiple and pairwise common factors.

Now we can describe an action of group $G$ on the surface $S$. The  action of $St(\bar{A}_5)$ on $\mathbb{C}[t_0,t_1]$ induces an action on $\xi_i^j=f_i^{-1}(t_0^j t_1^{a_i-j})$, $i=0,1,2$, $0\le j \le a_i$.
   The action of $G$  is generated by the action of $G_K$ and the action of $St(\bar{A}_5)$.
 Therefore we easily get \eqref{G}. It's easy to check that all possibilities for $G$ in \eqref{G} occur.
       \end{proof}

                 \section{Conjugacy question.}
                      \label{Conjugate}
    The main result of this section is Theorem \ref{th6}. Here we reprove results of \cite[Section 8]{Dolgachev-Iskovskikh} for the sake of completeness.
     As the main tool we will use the next theorem.

            \begin{theorem}[{\cite[Theorem 1.6]{Iskovskikh-96}}]
   \label{thIsk}
        \begin{enumerate}
         \item Let $(S,G)$ be a surface in the class $\mathbb D$ with degree $K_S^2=1$, and let
                $\chi : S \dashrightarrow S'$ be a birational $G$-invariant map onto an arbitrary surface $(S',G) \in \mathbb D \cup  \mathbb {CB} $.
Then $S'$, like $S$, is a del Pezzo surface of degree $1$ and $\chi$ is an isomorphism.
                   \item Let $\chi: S \dashrightarrow   S'$ be a birational map, where $(S,G) \in  \mathbb D$ and $(S',G)     \in  \mathbb D  \cup  \mathbb{CB} $.
Suppose that $S$ has no  points $x$   with $|\Orb_G(x)| < K_S^2$, where $\Orb_G(x)$ is an orbit of  point $x$ under action of  $G$. Then $\chi$ is an
isomorphism.
                  \item Let $\chi : S \dashrightarrow S'$ be a birational $G$-invariant map, where $(S,G,\phi) \in  \mathbb {CB} $ and $(S',G) \in  \mathbb D  \cup  \mathbb {CB}$.
Suppose that $K_S^2\le 0$; then $(S',G,\phi') \in  \mathbb {CB}$, $K_S^2 = K_{S'}^2$, and $\chi$ takes a pencil
of conics on $S$ to a pencil of conics on $S'$, that is, the diagram
               $$
   \xymatrix{
    S \ar@{-->}[r]^\chi \ar[d]_\phi & S'\ar[d]_{\phi'}\\
     \mathbb{P}^1 \ar[r]^\pi   & \mathbb{P}^1
}
                                               $$
       is commutative, where $\pi$ is an isomorphism over $\mathbb{C}$.
    \end{enumerate}
            \end{theorem}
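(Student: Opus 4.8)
The plan is to deduce all three parts from the $G$-equivariant Noether--Fano--Iskovskikh inequality together with the factorisation of any $G$-equivariant birational map between relatively $G$-minimal rational surfaces into a chain of elementary $G$-links. First I would fix a $G$-invariant mobile linear system $\mathcal{H}'$ on $S'$ (for instance a suitable multiple of an ample $G$-invariant divisor, or $|-K_{S'}|$ when it is mobile) and let $\mathcal{H}=\chi^{-1}_*\mathcal{H}'$ be its proper transform on $S$. Since $\Pic(S)^G\simeq \mathbb{Z}$ in the del Pezzo case, we may write $\mathcal{H}\equiv \mu(-K_S)$ in $\Pic(S)^G\otimes \mathbb{Q}$ for a unique threshold $\mu>0$. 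If $\chi$ is not an isomorphism of Mori fibre spaces, the Noether--Fano inequality furnishes a $G$-invariant maximal centre; because $\Pic(S)^G$ has rank one, this centre is a $G$-orbit of (possibly infinitely near) points, and at each such base point $x$ the multiplicity $m_x$ of $\mathcal{H}$ satisfies $m_x>\mu$.

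The heart of the argument for parts (1) and (2) is then a counting estimate. I would resolve the base locus by a $G$-equivariant sequence of blow-ups $\sigma:\widetilde{S}\to S$ so that $\sigma^*\mathcal{H}-\sum_x m_x E_x$ becomes nef; taking self-intersections and using nefness gives
\[
\sum_{x\in \Orb_G(p)} m_x^2 \;\le\; \mathcal{H}^2 \;=\; \mu^2 K_S^2 .
\]
Combining this with $m_x>\mu$ over the whole orbit yields $|\Orb_G(p)|\cdot \mu^2 < \mu^2 K_S^2$, that is $|\Orb_G(p)|<K_S^2$. This is exactly the excluded configuration in part (2), so under that hypothesis no maximal centre can exist and $\chi$ is a $G$-isomorphism; $G$-minimality of both sides then prevents any contraction, and $\Pic(S')^G\simeq \Pic(S)^G\simeq \mathbb{Z}$ forces $(S',G)\in \mathbb{D}$. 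Part (1) is the special case $K_S^2=1$: since every orbit has size at least one, $|\Orb_G(p)|<1$ is impossible, so again there is no maximal centre, $\chi$ is an isomorphism, and $S'$ is a del Pezzo surface of degree one.

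For part (3) I would pass to the relative picture, where now $\Pic(S)^G\simeq \mathbb{Z}^2$ is spanned by the fibre class $f$ and the anticanonical (section) class. Writing the transform of a mobile system as $\mathcal{H}\equiv \mu(-K_S)+\lambda f$ and running the analogous analysis relative to $\phi$, one sees that the elementary links emanating from a conic bundle are of type II (fibrewise elementary transformations, which automatically preserve the pencil and keep $K^2$ constant since they only relocate reducible fibres), type III (a birational contraction to a del Pezzo surface), and type IV (a change of conic bundle structure). The main obstacle is to exclude the structure-changing links when $K_S^2\le 0$: I would show that a type III or type IV link requires a $G$-orbit of $(-1)$-curves, or a section of sufficiently small negative self-intersection, whose existence is incompatible with $K_S^2\le 0$, the negativity of the canonical degree ruling out the requisite low-degree subsystems. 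Consequently every link in the decomposition is of type II, the composite carries the pencil of conics on $S$ to a pencil of conics on $S'$ with $K_{S'}^2=K_S^2$, and the induced map $\pi$ on the base $\mathbb{P}^1$ is an isomorphism, which yields the commutative diagram and places $(S',G,\phi')$ in $\mathbb{CB}$.
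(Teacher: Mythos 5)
You should first be aware that the paper does not prove this statement at all: Theorem \ref{thIsk} is quoted verbatim from \cite[Theorem 1.6]{Iskovskikh-96} and used as a black box, so your proposal can only be compared with the method of that cited source. Your strategy --- the $G$-equivariant Noether--Fano inequality combined with factorization into elementary links --- is precisely the method of Iskovskikh's paper. For parts (1) and (2) your sketch is essentially sound: writing $\mathcal{H}\equiv \mu(-K_S)$ in $\Pic(S)^G\otimes\mathbb{Q}$, extracting a $G$-orbit of maximal centres with $m_x>\mu$, and using mobility of $\mathcal{H}$ to get $\sum_{x}m_x^2\le \mathcal{H}^2=\mu^2K_S^2$ does yield $|\Orb_G(p)|<K_S^2$, which is exactly the configuration excluded by hypothesis; the converse Noether--Fano criterion (canonical pair plus nef adjoint, here $K_S+\tfrac{1}{\mu}\mathcal{H}\equiv 0$) then gives that $\chi$ is an isomorphism, and (1) is the case $K_S^2=1$ since every orbit has size at least $1$.

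The genuine gap is in part (3), which is where all the content of the statement lies. You defer the exclusion of type III and type IV links to an argument that is wrong as literally stated: the ``existence of a $G$-orbit of $(-1)$-curves'' is in no way incompatible with $K_S^2\le 0$, since such a surface has $8-K_S^2\ge 8$ reducible fibres, each contributing two $(-1)$-curves. What must actually be shown is the following. A type III link contracts a $G$-invariant set of $k$ \emph{pairwise disjoint} $(-1)$-curves; by relative $G$-minimality (the definition of the class $\mathbb{CB}$) no such set can be vertical, and since $G$ preserves fibres, all its members are horizontal, so $E_i\cdot f\ge 1$ for each member. Writing $E=\sum E_i\equiv -aK_S+bf$ in $\Pic(S)^G\otimes\mathbb{Q}$ and using $E^2=E\cdot K_S=-k$ together with $E\cdot f=2a\ge k$, one solves the two linear relations and obtains $a^2K_S^2=k(2a+1)>0$, contradicting $K_S^2\le 0$. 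Similarly, a type IV link requires a second fibration class $f'$ with $f'^2=0$, $f'\cdot K_S=-2$, $f'\cdot f>0$; writing $f'\equiv -aK_S+bf$ forces $aK_S^2=4$, again impossible. Only with these two computations (or with an appeal to Iskovskikh's classification of links, which encodes them) does your induction close: every link in the chain is then of type II, i.e.\ a fibrewise elementary transformation, which preserves $K^2$ and the pencil and induces an automorphism of the base, giving $(S',G,\phi')\in\mathbb{CB}$, $K_{S'}^2=K_S^2$, and the commutative diagram. As proposed, this key step is asserted rather than proved, and the heuristic offered for it would not survive scrutiny.
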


                \begin{theorem}
             \label{th6}
                     \begin{enumerate}
          \item \label{1case} Let $(S,G,\phi)$ be a surface in the class $\mathbb{CB}$, $K_S^2\le 0$, and $G$ be a finite nonsolvable group. Let $\chi: S \dashrightarrow S'$ be a birational $G$-invariant map, where $(S',G) \in \mathbb{D} \cup \mathbb{CB}$. Then $(S',G,\phi') \in \mathbb{CB}$, and $K_S^2=K_{S'}^2$. The map $\chi$ is a composition of elementary transformations $\elm_{x_1}\circ \ldots \circ \elm_{x_n}$, where $(x_1,\ldots,  x_n)$ is a $G$-invariant set of points not lying
on a singular fibre with no two points lying on the same fibre.
           \item Let $(S,G,\phi)$ be a surface in the class $\mathbb{CB}$, $K_S^2> 0$, and $G$ be a finite nonsolvable group.     Let $\chi: S \dashrightarrow S'$ be a birational $G$-invariant map, where $(S',G)\in \mathbb{D} \cup \mathbb{CB}$.
Then $K_S^2=8$, $S\simeq \mathbb{F}_n$, $n\ne 1$.
        \begin{enumerate}
       \item \label{case1} Let $n$ is odd. Then one the following cases occurs
           \begin{equation}
       \begin{aligned}
      & S'\simeq \mathbb{P}^2, \text{and}\ G\simeq \mathbb{Z}_{n'} \times \bar{A}_5;\\
     & S' \simeq \mathbb{F}_m, \text{where}\ m\ \text{is odd and}\ m\ne 1.
             \end{aligned}
           \end{equation}
          \item
\label{case2}       Let $n$ is even. Then $S'$ can be isomorphic to $\mathbb{F}_m$, where $m$ is even. If $n=0$, and $m\ne 0$, then $G\simeq \mathbb{Z}_m \times A_5$.
           \end{enumerate}
\item   Let $(S,G)$ be a surface in the class $\mathbb{D}$, and $G$ be a finite nonsolvable group. Let $\chi: S \dashrightarrow S'$ be a birational $G$-invariant map, where $(S',G)\in \mathbb{D} \cup \mathbb{CB}$. Then we have the following.
          \begin{enumerate}
  \item   Let $S\simeq \mathbb{P}^2$, and $G\simeq \mathbb{Z}_m \times \bar{A}_5$. Then the surface $S'$ may be isomorphic to either $\mathbb{P}^2$, or $\mathbb{F}_n$, where $n$ is odd.
     \item Otherwise we have $S' \simeq S$. If $K_S^2 < 9$, then $\chi$ is an automorphism of $S$.
                 \end{enumerate}
    \end{enumerate}
                 \end{theorem}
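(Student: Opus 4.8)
The plan is to organise the entire argument around Iskovskikh's factorisation theorem (Theorem \ref{thIsk}), which supplies the three cases according to the class of $S$, and to supplement it with one soft but decisive principle. Since conjugacy of embeddings $G\hookrightarrow\Cr_2(\mathbb{C})$ is detected by $G$-equivariant birational maps of $G$-surfaces, the \emph{abstract} isomorphism type of $G$ is an invariant of the whole birational class. The concrete form I would use is that $\bar{A}_5\simeq 2.A_5$ is perfect of order $120$, so any perfect subgroup $P$ of $\mathbb{Z}_m\times A_5$ satisfies $P=[P,P]\subseteq[\mathbb{Z}_m\times A_5,\mathbb{Z}_m\times A_5]=\{0\}\times A_5$; hence $\bar A_5$ never embeds into $\mathbb{Z}_m\times A_5$. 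Thus a model carrying $\bar A_5$ can never be $G$-birational to one carrying only $A_5$, and I would invoke this repeatedly to prune the list of admissible targets $S'$.

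For part \ref{1case} the statement is almost immediate. Since $(S,G,\phi)\in\mathbb{CB}$ with $K_S^2\le 0$, Theorem \ref{thIsk}(3) already gives $(S',G,\phi')\in\mathbb{CB}$, the equality $K_S^2=K_{S'}^2$, and the commutative square expressing that $\chi$ sends the pencil of conics on $S$ to that on $S'$ over an isomorphism of the bases. It then remains only to factor a fibrewise birational map of conic bundles over a common $\mathbb{P}^1$; by the classical theory of elementary links such a map is a composition of elementary transformations centred off the singular fibres. Here $G$-equivariance forces the centres to be $G$-invariant, and after the usual normalisation one may arrange that no two centres share a fibre, which yields the asserted form $\chi=\elm_{x_1}\circ\cdots\circ\elm_{x_n}$.

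Part 2 rests on an orbit count. As $\phi_*\colon G\to\Aut(\mathbb{P}^1)$ has nonsolvable image we have $G_B\simeq A_5$, and the points over which $\phi$ has a reducible fibre form a $G_B$-invariant subset of $\mathbb{P}^1$. Every $A_5$-orbit on $\mathbb{P}^1$ has size $12$, $20$, $30$ or $60$ (the degrees of $\Phi_1,\Phi_2,\Phi_3$), so a nonempty invariant set has at least $12$ points; since their number is $r=8-K_S^2$, the hypothesis $K_S^2>0$ forces $r=0$, i.e.\ $K_S^2=8$ and $S\simeq\mathbb{F}_n$ with $n\ne 1$ by Theorem \ref{th01}. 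For the targets I would note that each $G$-equivariant elementary transformation of $\mathbb{F}_n$ is centred at an orbit projecting bijectively to a $G_B$-orbit on the base, which again has even size, while a single elementary transformation alters $n$ by $\pm 1$; hence the parity of $n$ is a birational invariant. Combined with Theorem \ref{th01} ($G\simeq\mathbb{Z}_m\times\bar A_5$ for $n$ odd, $G\simeq\mathbb{Z}_m\times A_5$ for $n$ even) and the isomorphism-type principle, this forbids any passage between the parities and shows that a del Pezzo target can occur only when $G$ contains $\bar A_5$, namely $\mathbb{P}^2$ with the de Jonqui\`eres group $\mathbb{Z}_{n'}\times\bar A_5$ of case (1) of Theorem \ref{K_S9}; this gives cases \ref{case1} and \ref{case2}.

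Part 3 splits according to whether $G$ possesses a short orbit. If $S\simeq\mathbb{P}^2$ and $G\simeq\mathbb{Z}_m\times\bar A_5$, then $G$ fixes a point (case (1) of Theorem \ref{K_S9}), an orbit of length $1<9$; blowing it up and running elementary transformations produces exactly the partners $\mathbb{P}^2$ and $\mathbb{F}_n$ with $n$ odd, again constrained by parity and isomorphism type. In every remaining del Pezzo case I would verify, from the explicit models of Section \ref{DelPezzo}, that $G$ has no orbit of length less than $K_S^2$, so that Theorem \ref{thIsk}(2) forces $\chi$ to be an isomorphism and hence $S'\simeq S$, with $\chi$ an automorphism once $K_S^2<9$; note that $K_S^2=1$ does not arise at all by Theorem \ref{thdp3}, so Theorem \ref{thIsk}(1) is invoked vacuously. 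The main obstacle is precisely this last verification together with the borderline exclusions. For instance the icosahedral $A_5\subset PGL(3,\mathbb{C})$ fixing a conic has the six axes of its elements of order $5$ as an orbit of length $6<9$, so Theorem \ref{thIsk}(2) does not apply directly; one must instead show that these six points lie in special position (some three are collinear, for a del Pezzo cubic would otherwise carry a minimal $A_5$-action excluded by Theorem \ref{thdp2}), so that blowing them up yields no competing minimal model. The analogous checks for $L_2(7)$ and $A_6$ on $\mathbb{P}^2$, for the degrees $8,5,3,2$ of Section \ref{DelPezzo}, and for the exclusion of a $\mathbb{P}^2$-target from the even $\mathbb{F}_n$, are the routine-but-essential core, and this is exactly the point at which one reproduces the elementary-link analysis of \cite[Section 8]{Dolgachev-Iskovskikh}.
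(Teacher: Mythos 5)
Your treatment of parts \ref{1case} and 2 follows the same route as the paper. Part \ref{1case} is Theorem \ref{thIsk}(3) plus the standard factorization of a fibrewise $G$-equivariant birational map into elementary transformations with invariant centres (the paper cites this from Dolgachev--Iskovskikh), and in part 2 you reproduce both of the paper's key steps: a nonempty $G_B$-invariant subset of $\mathbb{P}^1$ has at least $12$ points, forcing $r=0$ and $K_S^2=8$, and every $A_5$-orbit on $\mathbb{P}^1$ has even length, so links of type II preserve the parity of $n$. Your extra observation that $\bar{A}_5$ is perfect and hence never embeds into $\mathbb{Z}_m\times A_5$, nor into $A_5\times B$ with $B\subset PGL(2,\mathbb{C})$ finite, is correct and is a clean packaging of the parity obstruction; the paper instead notes that a link of type III can only start from $\mathbb{F}_1$ and then defers to part 3.

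The genuine gap is in part 3, at exactly the point you single out as essential: the conic-preserving $A_5\subset PGL(3,\mathbb{C})$ (second case of Theorem \ref{K_S9}). Your claim that three of the six order-$5$ axis points are collinear is false. With the icosahedron vertices at the cyclic permutations of $(0,\pm1,\pm\phi)$, $\phi=\frac{1+\sqrt{5}}{2}$, every $3\times 3$ determinant formed from three of the six axis points equals $\pm 2\phi$ or $\pm 2\phi^{2}$, never $0$; group-theoretically, since $A_5\simeq L_2(5)$ moves any unordered triple of the six points in an orbit of length at least $10$, ten lines each carrying three of the points would use $30$ point-pairs while only $\binom{6}{2}=15$ exist. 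So the six points are in general position, and their blow-up is the Clebsch diagonal cubic. Your fallback appeal to Theorem \ref{thdp2} also fails: that theorem classifies \emph{minimal} pairs, with $\Pic(S)^G\simeq\mathbb{Z}$, whereas the $A_5$-action on this cubic has invariant Picard rank $2$, spanned by the pullback of the line class and the sum of the six exceptional curves, so it is not a pair in $\mathbb{D}$ and Theorem \ref{thdp2} does not exclude it.

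What actually closes this case (and is what the paper's terse ``easily follows from the above investigation'' stands for) is a link analysis rather than a general-position obstruction: the six-point orbit is the only orbit of length $<9$; the blown-up cubic carries no $G$-invariant fibration class, since an invariant class is of the form $dH-m\sum E_i$ and $d^2-6m^2=0$ has no nonzero integral solutions, so no link of type I starts there; and its only other $G$-extremal contraction collapses the six conics through five of the six points, landing on $\mathbb{P}^2$ again. Hence every chain of links starting from $(\mathbb{P}^2,A_5)$ terminates at $\mathbb{P}^2$, giving $S'\simeq\mathbb{P}^2$ even though $\chi$ need not be an isomorphism --- which is exactly why the theorem asserts that $\chi$ is an automorphism only when $K_S^2<9$.
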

    \begin{remark}
I.Cheltsov proved the following. If $(\mathbb{F}_n,G,\phi)\in \mathbb{CB}$, $n\ne 1$, and $G$ be a finite nonsolvable group, then there exist a birational $G$-invariant map $\chi: \mathbb{F}_n \dashrightarrow S'$, where
     \begin{equation*}
       \begin{aligned}
      & S'\simeq \mathbb{P}^2, \text{if $n$ is odd};\\
     & S' \simeq \mathbb{F}_0, \text{if $n$ is even}.
             \end{aligned}
           \end{equation*}
   \end{remark}
        \begin{proof}
  The first case of  theorem follows from Theorem \ref{thIsk} and \cite[Theorems 7.7, Proposition 7.14]{Dolgachev-Iskovskikh}.

Let's prove the second case of  theorem. First we prove that $K_S^2=8$. Denote by $r$ the number of the reducible fibres of  conic bundle $(S,G,\phi)$. Suppose that $r\ne 0$. Obviously, we have $G_B \simeq A_5$ (see exact sequence \eqref{1} and Klein's classification of the finite subgroups in $PGL(2,\mathbb{C})$ in \cite[Section 5.5]{Dolgachev-Iskovskikh}). Then by \eqref{grundform} we have $r \ge 12$. Hence by Noether formula $K_S^2=8-r \le -4$. Therefore $r=0$, $K_S^2=8$.

  We will use results of Theorem \ref{th01} and theory of elementary links (see \cite[Section 7.2]{Dolgachev-Iskovskikh}). By \cite[Theorem 7.7]{Dolgachev-Iskovskikh} the map $\chi$ is equal to a composition of elementary links $\chi_1 \circ \ldots \circ \chi_k$. It's easy to check that $\chi_k$ is an elementary link of type II (see Theorem \ref{th01}). We have $\chi_k(\mathbb{F}_n)\simeq \mathbb{F}_l$. Then we apply Theorem \ref{th01}. For any point $x \in \mathbb{P}^1$ we have $|\Orb_{St(A_5)}(x)|$ is even (see Notation \ref{stA5}). Hence we easily get that $l-n$ is even.

Therefore if $n$ is even, then $\chi_i$, $i=1,\ldots,k$ are elementary links of type II. And we easily get the case \ref{case2} of  theorem.

Consider case, when $n$ is odd.  Then  $G \simeq \mathbb{Z}_{n'}\times \bar{A}_5$ by Theorem \ref{th01}.  Suppose that $\chi$ is not a composition of elementary links of type II. Then one of elementary links $\chi_i$, $1 \le i \le k-1$ must be a link of type III.
We may suppose that the links $\chi_j$, $i<j \le k$ are of type II.
In this case $\chi_i\circ \ldots \circ \chi_k(S)\simeq \mathbb{P}^2$. Below we will see that $S'\not \simeq X$, where $(X,G)\in \mathbb{D}$, and $K_X^2<9$. Therefore we get the case \ref{case1} of theorem.

  Let's prove the third case of theorem. To apply Theorem \ref{thIsk}, we need to show, that $S$ has no  points $x$   with $|\Orb_G(x)| < K_S^2$. We will argue, considering different values of $K_S^2$.

 If  $K_S^2$ is equal to either $7$, or $6$, or $4$ , or $1$, then by Theorem \ref{thdp3} there is no such pairs $(S,G)$.

    Let $K_S^2=2$. We apply Theorem \ref{thdp1}. Consider the following two automorphisms of surface $S$, given by equation \eqref{K_S2}:
   \begin{equation*}
    \begin{aligned}
    & \alpha: (T_0:T_1:T_2:T_3) \mapsto (T_1:T_2:T_0:T_3),\\
     & \beta: (T_0:T_1:T_2:T_3) \mapsto (\varepsilon_7 T_0: \varepsilon_7^4 T_1: \varepsilon_7^2 T_2: T_3).
    \end{aligned}
    \end{equation*}
It's easy to check by calculations that $\alpha$, $\beta\in G$, and there is no point $x\in S$ fixed under an action of  subgroup $H\subset G$ generated by $\alpha$ and $\beta$.

    Let $K_S^2=3$. We apply Theorem \ref{thdp2}. Consider the maps \eqref{K_S3eq}. Again by easy calculations we can check that there is no point $x\in S$, such that $|\Orb_G(x)|< 3$.

    Let $K_S^2=5$. We apply Theorem \ref{thdp4}. It's sufficient to consider the case $G\simeq A_5$. Suppose that there is a point $x\in S$, such that $|\Orb_G(x)|<K_S^2=5$. Denote by $\Stab_G(x)$ the stabilizer of point $x$ in the group $G$. Then $\Stab_G(x)$ is a subgroup of $G$ with order either $15$, or $20$, or $30$ or $60$.  It's well known that there are no subgroups of $A_5$ with order either $15$, or $20$, or $30$. Hence $|\Stab_G(x)|=60=|G|$. But it's easy to see from \eqref{K_S5eq} that the action of group $G$ on the surface $S$ has no fixed points. Contradiction.

   Let $K_S^2=8$. We apply Theorem \ref{K_S8}. Consider the action of $St(A_5)$ on $\mathbb{P}^1$. It's known (see \eqref{grundform}) that $|\Orb_{St(A_5)}(x)|\ge 12$ for any point $x \in \mathbb{P}^1$. Therefore $|\Orb_G(x)|\ge 12$ for any point $x \in S$.

  The case $K_S^2=9$ easily follows from the above investigation and Theorem \ref{K_S9}. However in this case the
 condition $|\Orb_{St(A_5)}(x)| \ge 9$ for any point $x \in S\simeq \mathbb{P}^2$ not always holds. For example in the second case of Theorem \ref{K_S9}. Therefore we cannot apply Theorem \ref{thIsk} to prove that $\chi$ is an isomorphism. But in fact we need only to know an isomorphism class of $S'$ to describe conjugacy classes.
    \end{proof}

%%\begin{corollary}
%%Suppose that the conditions of the first case of Theorem \ref{th6} are satisfied.
%%Then suppose that the conditions of either Theorem \ref{th1}, or Theorem \ref{th2}, or Theorem \ref{th3}, or Theorem \ref{th4} are satisfied for the triple $(S,G,\phi)$. Then the same conditions are satisfied for the triple $(S',G,\phi')$. Moreover $e$ is a birational invariant.
%%\end{corollary}
  %% \begin{proof}
%%The first part of the corollary is obvious. Let's prove that $e$ is a birational invariant. It's sufficient to prove that triples $(S,G,\phi)$ and $(S',G,\phi')$ have the same invariant $e$.

%%Let $E$ be a section of ruled surface $\mathbb{F}_e$. Let $\tilde{E}$ be the strict transform of $E$ on the surface $S$. Obviously that $\tilde{E}$ is a bisection of conic bundle $\phi: S \rightarrow \mathbb{P}^1$.
%%It's easy to check that it'sufficient to prove $\tilde(E) $
%%Recall that $C$ is the fixed curve of involution $\iota$. Obviously that the points $x_i$, $i=1,\ldots,n$ are lying on the curve $C$.

%%\end{proof}

\begin{corollary}
Suppose that the conditions of  case \ref{1case} of Theorem \ref{th6} are satisfied. After suppose that $G_K
\not \simeq \mathbb{Z}_2$ (see \eqref{1}). Then $\chi$ is an isomorphism.
\end{corollary}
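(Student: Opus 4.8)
The plan is to show that, once we assume $G_K\not\simeq\mathbb{Z}_2$, the list of centres produced by case \ref{1case} of Theorem \ref{th6} is empty. By that case we may write $\chi=\elm_{x_1}\circ\ldots\circ\elm_{x_n}$, where $(x_1,\ldots,x_n)$ is a $G$-invariant set of points, each lying on a smooth fibre of $\phi$ and no two of them on the same fibre; so it suffices to prove $n=0$. The first step is a reduction to the fibrewise action of $G_K$. Since $G_K=\Ker(\phi_*)$ acts trivially on the base, it carries every fibre $F_t=\phi^{-1}(t)$ to itself; as $\{x_1,\ldots,x_n\}$ is $G_K$-invariant and meets each fibre in at most one point, $G_K$ cannot move $x_1$ off its fibre and must fix it. Hence, supposing $n\ge1$, the group $G_K$ fixes the point $x_1$ on the smooth fibre $F_{t_1}\simeq\mathbb{P}^1$, so $G_K\subseteq\Stab_G(x_1)$.

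The second step is to see that a point of a smooth fibre cannot be fixed by a non-cyclic $G_K$. The restriction $G_K\to\Aut(F_{t_1})\simeq PGL(2,\mathbb{C})$ is faithful, which I would read off the explicit descriptions of $G_K$: when $G_0\simeq1$, Theorem \ref{th1.4} gives $G_K\simeq\mathbb{Z}_2^2$ with its three nontrivial involutions fixing the distinct bisections $C_0,C_1,C_2$, so each restricts to a nontrivial involution of $F_{t_1}$ and the restriction has trivial kernel; when $G_0\not\simeq1$ the surface is an exceptional conic bundle (Theorem \ref{th1.1.2}), and by Theorem \ref{thExcept} the group $G_K$ acts on the conic fibre through the coordinates $(t_2:t_3)$ as a dihedral group, again faithfully. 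A finite subgroup of $PGL(2,\mathbb{C})$ fixing a point of $\mathbb{P}^1$ lies in the stabiliser of that point, which is isomorphic to the group of affine transformations $z\mapsto az+b$ of the line, all of whose finite subgroups are cyclic; hence fixing $x_1$ would force $G_K$ to be cyclic.

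The third step is the elementary observation that, in every situation allowed here, the hypothesis $G_K\not\simeq\mathbb{Z}_2$ is exactly the statement that $G_K$ is non-cyclic. If $G_0\simeq1$, then Theorem \ref{th1.4} allows only $G_K\simeq\mathbb{Z}_2$ or $G_K\simeq\mathbb{Z}_2^2$, so excluding $\mathbb{Z}_2$ leaves the non-cyclic group $\mathbb{Z}_2^2$. If the bundle is exceptional, then Theorem \ref{thExcept} makes $G_K$ a dihedral group, and a dihedral group is cyclic precisely when it is isomorphic to $\mathbb{Z}_2$. Thus under our hypothesis $G_K$ is non-cyclic, contradicting the conclusion of the previous step. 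Therefore $n=0$ and $\chi$ is an isomorphism.

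The only genuinely delicate point is the faithfulness of the action of $G_K$ on the smooth fibre $F_{t_1}$, since a finite-order automorphism of $S$ can in principle fix an entire fibre pointwise; this is why I would verify faithfulness from the explicit normal forms of $G_K$ furnished by Theorems \ref{th1.4} and \ref{thExcept} rather than attempt a purely abstract surface-theoretic argument. Everything else is a short orbit count together with the classification of the finite subgroups of $PGL(2,\mathbb{C})$.
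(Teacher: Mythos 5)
Your proposal is correct and takes essentially the same route as the paper: the paper's own proof likewise observes that Theorems \ref{th1.4} and \ref{thExcept} force $G_K\simeq\mathbb{Z}_2^2$ or $D_n$ with $n\ge 2$, and then concludes that any $G$-orbit must contain two points on the same fibre, contradicting the description of $\chi$ in case \ref{1case} of Theorem \ref{th6}. You have simply made explicit the step the paper dismisses as ``easy to see'', namely that the non-cyclic group $G_K$ acts faithfully on each smooth fibre and, as a non-cyclic finite subgroup of $PGL(2,\mathbb{C})$, can fix no point there.
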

\begin{proof}
By Theorems \ref{th1.4} and \ref{thExcept} we have $G_K \simeq \mathbb{Z}_2^2$ or $D_n$, $n \ge 2$. Therefore it's easy to see that any $G$-orbit of points $(x_1,\ldots,x_n)$ has two points lying on the same fibre of conic bundle $\phi$. Contradiction to  case \ref{1case} of Theorem \ref{th6}.
\end{proof}

   \section{The list of finite nonsolvable subgroups in $\Cr_2(\mathbb{C})$.}
     \label{List}
   Summarizing results obtained in sections \ref{DelPezzo} and \ref{ConBundle} we get the following list of finite nonsolvable subgroups in $\Cr_2(\mathbb{C})$.

   \begin{itemize}
   \item   $L_2(7)$

     This group is presented in $\Cr_2(\mathbb{C})$ by pairs $(S,L_2(7)) \in \mathbb{D}$, which were described in  Theorems \ref{K_S9} and \ref{thdp1}.
    \item $\mathbb{Z}_2 \times L_2(7)$

  This group is presented in $\Cr_2(\mathbb{C})$ by pairs $(S,\mathbb{Z}_2 \times L_2(7)) \in \mathbb{D}$, which were described in   Theorem \ref{thdp1}.
   \item $A_6$

   This group is presented in $\Cr_2(\mathbb{C})$ by pairs $(S,A_6) \in \mathbb{D}$, which were described in  Theorem \ref{K_S9}.
   \item $S_5$

    This group is presented in $\Cr_2(\mathbb{C})$ by pairs $(S,S_5) \in \mathbb{D}$, which were described in  Theorems \ref{thdp4}, \ref{thdp2}.
    \item $St(A_5)\wr \langle \tau \rangle$

       This group is presented in $\Cr_2(\mathbb{C})$ by pairs $(S,St(A_5)\wr \langle \tau \rangle) \in \mathbb{D}$, which were described in Theorem \ref{K_S8}.
          \item  $A_5 \times A_5$, $A_5 \times S_4$, $A_5 \times A_4$.

   These groups are presented in $\Cr_2(\mathbb{C})$ by triples $(S,G,\phi) \in \mathbb{CB}$ ($G$ is one of our groups), which were described in  Theorem \ref{th01}.
    \item $D_n \times \bar{A}_5$, $n\ge 3$

  This group is presented in $\Cr_2(\mathbb{C})$ by triples $(S,D_n \times \bar{A}_5, \phi) \in \mathbb{CB}$, which were described in  Theorem \ref{thExcept}.
   \item $D_n \times A_5$, $n\ge 3   $

   This group is presented in $\Cr_2(\mathbb{C})$ by triples $(S,D_n \times A_5, \phi) \in \mathbb{CB}$, which were described in  Theorems \ref{th01} and  \ref{thExcept}.
    \item $\mathbb{Z}_n \times \bar{A}_5$, $n \ge 3$

         This group is presented in $\Cr_2(\mathbb{C})$ by pairs $(S,\mathbb{Z}_n \times \bar{A}_5) \in \mathbb{D}$ and triples $(S,\mathbb{Z}_n \times \bar{A}_5,\phi)$, which were described in Theorems \ref{K_S9},  \ref{th01}.
     \item  $\mathbb{Z}_n \times A_5$, $n \ge 3$

    This group is presented in $\Cr_2(\mathbb{C})$ by triples $(S,\mathbb{Z}_n \times A_5,\phi) \in \mathbb{CB}$, which were described in Theorem \ref{th01}.
     \item $\mathbb{Z}_2^2 \times \bar{A}_5$

     This group is presented in $\Cr_2(\mathbb{C})$ by triples $(S,\mathbb{Z}_2^2 \times \bar{A}_5, \phi) \in \mathbb{CB}$, which were described in Theorem \ref{thExcept}.
     \item $\mathbb{Z}_2^2 \times A_5$

     This group is presented in $\Cr_2(\mathbb{C})$ by triples $(S,\mathbb{Z}_2^2 \times A_5,\phi) \in \mathbb{CB}$, which were described  in Theorems \ref{th01},  \ref{thExcept} and  \ref{th5}.
     \item $\mathbb{Z}_2 \times \bar{A}_5$

      This group is presented in $\Cr_2(\mathbb{C})$ by pairs $(S,\mathbb{Z}_2 \times \bar{A}_5) \in \mathbb{D}$ and triples $(S,\mathbb{Z}_2 \times \bar{A}_5,\phi) \in \mathbb{CB}$, which were described in  Theorems \ref{K_S9},  \ref{th01},  and  \ref{th5}.
        \item $\mathbb{Z}_2 \times A_5$

     This group is presented in $\Cr_2(\mathbb{C})$ by pairs $(S,\mathbb{Z}_2 \times A_5) \in \mathbb{D}$ and triples $(S,\mathbb{Z}_2 \times A_5,\phi) \in \mathbb{CB}$, which were described in Theorems \ref{K_S8}, \ref{th01},  \ref{th1},  \ref{th2}, \ref{th4}.
       \item $\bar{A}_5$

     This group is presented in $\Cr_2(\mathbb{C})$ by pairs $(S,\bar{A}_5) \in \mathbb{D}$ and triples $(S,\bar{A}_5,\phi) \in \mathbb{CB}$, which were described in Theorems
      \ref{K_S9},  \ref{th01}, \ref{th1},  \ref{th2},  \ref{th3},  \ref{th4}.

       \item $A_5$

    This group is presented in $\Cr_2(\mathbb{C})$ by pairs $(S,A_5) \in \mathbb{D}$ and triples $(S,A_5,\phi) \in \mathbb{CB}$, which were described in  Theorems
     \ref{K_S9},   \ref{thdp4},  \ref{th01}.
  \end{itemize}


\begin{thebibliography}{10}

 \bibitem{1} L. Bayle and A. Beauville, Birational involutions of $\mathbb{P}^2$. \textit{Asian J. Math.} \textbf{4} (2000), no. 1, 11-17.

 \bibitem{3} E. Bertini, Richerche sulle transformazioni univoche involutorie nel piano. \textit{Annali di Mat.} \textbf{8} (1877), 244-286.

  \bibitem{Blanc-2011} J.~Blanc, Elements and cyclic subgroups of finite order of the Cremona group. \textit{Comment. Math. Helv.} \textbf{86} (2011), ¹ 2, 469-497.

 \bibitem{Dolgachev-Iskovskikh} I. Dolgachev and V. Iskovskikh,
Finite subgroups of the plane Cremona group. In: \textit{Algebra, arithmetic, and geometry: in honor of Yury I. Manin}, Vol. I, pp. 443-–548,   Boston 2009.

 \bibitem{DolgachevTopics} I.~Dolgachev, Topics in classical algebraic geometry. Preprint (2010). http://www.math.lsa.umich.edu/idolga/lecturenotes.html

            \bibitem{Fulton} W. Fulton and J. Harris, \textit{Representation theory: A first course}. Graduate Texts in Mathematics, Springer-Verlag New York Inc 1991.

       \bibitem{9} V. A. Iskovskikh, Rational surfaces with a pencil of rational curves. \textit{Sbornik Mathematics (N.S.)}. \textbf{74(116):4} (1967), 608-–638.

  \bibitem{10} V. A. Iskovskikh, Rational surfaces with a pencil of rational curves and with positive square of the canonical class.  \textit{Sbornik Mathematics (N.S.)}. \textbf{83(125):1(9)} (1970), 90-–119.

  \bibitem{11} V. A. Iskovskikh, Minimal models of rational surfaces over arbitrary fields. \textit{Izv. Akad. Nauk USSR Ser. Mat.}. \textbf{ 43:1} (1979), 19–-43.

    \bibitem{Iskovskikh-96} V. A. Iskovskikh, Factorization of birational maps of rational surfaces from the viewpoint of Mori theory. \textit{Russ. Math. Survey}. \textbf{51(4)} (1996), 585-–652.

  \bibitem{7} S. Kantor, \textit{Theorie der endlichen Gruppen von eindeutigen Transformationen in der Ebene}.  Berlin (1895).

    \bibitem{Manin-67}  Yury I. Manin, Rational surfaces over perfect fields, II.  \textit{Sbornik Mathematics (N.S.)}. \textbf{ 72(114):2} (1967), 161–-192.

       \bibitem{Tsygankov-10} V. I. Tsygankov, Equations of $G$-minimal conic bundles.  \textit{Sbornik: Mathematics (N. S.)}. \textbf{202}:11 (2011), 1667-1721.

  \bibitem{8} A. Wiman, Zur Theorie der endlichen Gruppen von birationalen Transformationen in der Ebene. \textit{ Math. Ann.}.   \textbf{ 48} (1896), 195--241.
\end{thebibliography}
          \end{document}